 \newtheorem{thm}{Theorem}[section]
 \newtheorem{cor}[thm]{Corollary}
 \newtheorem{lem}[thm]{Lemma}
 \newtheorem{prop}[thm]{Proposition}
 \theoremstyle{definition}
 \newtheorem{defn}[thm]{Definition}
 \theoremstyle{remark}
 \newtheorem{rem}[thm]{Remark}
 \theoremstyle{definition}
 \newtheorem{conj}[thm]{Conjecture}
\theoremstyle{definition}
 \newtheorem{example}[thm]{Example}
 \theoremstyle{definition}
 \newtheorem{notn}[thm]{Notation}
 \numberwithin{equation}{section}
 \newcommand{\PGL}{\mathrm{PGL}}
 \newcommand{\SL}{\mathrm{SL}}
 \newcommand{\Type}{\mathrm{Type}}
 \newcommand{\Ver}{\mathrm{Ver}}
 \newcommand{\St}{\mathrm{St}}
 \newcommand{\Lk}{\mathrm{Lk}}
\newcommand{\sC}{\mathscr{C}}
\newcommand{\sZ}{\mathscr{Z}}   
 \newcommand{\fT}{\mathfrak T}
 \newcommand{\fB}{\mathfrak B}
 \newcommand{\cO}{\mathcal{O}}
 \newcommand{\cV}{\mathcal{V}}
 \newcommand{\cG}{\mathcal{G}}
 \renewcommand{\cD}{\mathcal{D}}
 \newcommand{\cF}{\mathcal{F}}
 \renewcommand{\cL}{\mathcal{L}}
 \renewcommand{\cH}{\mathcal{H}}
 \newcommand{\R}{\mathbb{R}}
 \newcommand{\F}{\mathbb{F}}
 \newcommand{\Q}{\mathbb{Q}}
 \newcommand{\Z}{\mathbb{Z}}
 \newcommand{\eps}{\varepsilon}
 \newcommand{\G}{\Gamma}
 \newcommand{\La}{\Lambda}
 \newcommand{\la}{\lambda}
 \newcommand{\harm}[1]{\cH^{#1}}
 \newcommand{\gc}[2]{\left[\begin{matrix} #1 \\ #2\end{matrix} \right]_{\mkern-2.8mu q}}
\begin{document}

\title[On Garland's vanishing theorem for $\SL(n)$]
{On Garland's vanishing theorem for $\mathbf{SL(n)}$}

\author{Mihran Papikian}

\address{Department of Mathematics, Pennsylvania State University, University Park, PA 16802, U.S.A.}
\email{papikian@psu.edu}

\thanks{The author's research was partially supported by a grant from the Simons Foundation (245676) and 
the NSA Young Investigator Grant (H98230-15-1-0008).} 

\subjclass[2010]{20E40; 05C50; 22E40}

\keywords{Garland's method; combinatorial laplacians; buildings}

\date{}

\begin{abstract} This is an expository paper on Garland's vanishing theorem 
specialized to the case when the linear algebraic group is $\SL_n$. 
Garland's theorem can be stated as a vanishing of the cohomology groups 
of certain finite simplicial complexes. 
The method of the proof is quite interesting on its own. It relates the vanishing of cohomology 
 to the assertion that the minimal positive eigenvalue of a certain combinatorial laplacian is sufficiently large. 
Since the 1970's, this idea has found applications in a variety of problems 
in representation theory, group theory, and combinatorics, so the paper might be 
of interest to a wide audience. The paper is intended for non-specialists and graduate students. 
\end{abstract}

\maketitle

\section{Introduction}

\subsection{Statement of the theorem}
This is an expository paper on Howard Garland's work \cite{Garland} specialized to the case when the
linear algebraic group is $\SL_n$. Reading \cite{Garland} requires
knowledge of the theory of buildings. On the other hand, the
ideas in \cite{Garland}, in their essence, are combinatorial. 
Since the relevant Bruhat-Tits building in the $\SL_n$ case has a simple
description in terms of lattices in a finite dimensional vector space, 
one can give a proof of Garland's vanishing 
theorem which requires from the reader only familiarity with linear
algebra and some group theory. 
There is already an excellent Bourbaki Expos\'e by Borel
\cite{Borel} on Garland's work. The main difference of our
exposition, besides the more detailed proofs, is the absence of any
references to the theory of buildings, which makes the article
completely self-contained. Since Garland's result applies to quite
general discrete subgroups of $p$-adic groups, to give a full
account of his work one cannot avoid a discourse on the 
theory of buildings and representation theory. Other expositions 
of Garland's method and its generalizations can be found in \cite{BS} and \cite{ABM}; 
these papers also give nice applications of Garland's method to group theory and combinatorics,  
although they do not give a complete proof of Theorem \ref{thm1.1}.  

Let $K$ be a non-archimedean local field with residue field of order $q$; 
such a field is either isomorphic to a finite extension of the $p$-adic numbers $\Q_p$, or 
to the field of formal Laurent series $\F_q(\!(T)\!)$ over the finite field $\F_q$ with $q$ elements.  
Let $\G$ be a discrete subgroup of the topological group $\SL_n(K)$, $n\geq 2$.  
There is an infinite contractible $(n-1)$-dimensional simplicial complex $\fB$, the \textit{Bruhat-Tits
building of $\SL_{n}(K)$}, on which $\G$ naturally acts. 
The complex $\fB$ can be described in terms of lattices in the vector space $K^n$. 
One way to formulate the main result of \cite{Garland} for $\SL_n$ is as follows: 

\begin{thm}\label{thm1.1} Assume the quotient $\fB/\G$ is a finite complex. 
There is a constant $q(n)$ depending only on $n$ such that if $q\geq q(n)$ 
then for all $0<i<n-1$ the simplicial cohomology groups $H^i(\fB/\G, \R)$ are zero. 
\end{thm}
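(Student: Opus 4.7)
The plan is to use Hodge theory on the finite quotient $X = \fB/\G$: since $X$ is a finite simplicial complex, the real cohomology $H^i(X,\R)$ is naturally isomorphic to the space of harmonic $i$-cochains, i.e.\ the kernel of the combinatorial Laplacian $\Delta^i = d^{i-1}(d^{i-1})^\ast + (d^i)^\ast d^i$ acting on $C^i(X,\R)$. So it suffices to show that $\Delta^i$ has no nontrivial kernel for $0 < i < n-1$. Via the identification of $C^\bullet(X,\R)$ with $\G$-invariant cochains on $\fB$, one can work directly on the building.

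The central ingredient is \emph{Garland's local-to-global formula}, expressing the global inner product $\langle \Delta^i f, f \rangle$ as a sum over $\G$-orbits of vertices of local inner products on the links. In the $\SL_n$ case, the link $\Lk(v)$ of any vertex $v \in \fB$ is the finite spherical building of type $A_{n-1}$ over the residue field $\F_q$, i.e.\ the order complex of proper nonzero subspaces of $\F_q^n$, an $(n-2)$-dimensional complex. On each link one has a local Laplacian $\Delta^{i-1}_v$, and the formula takes the schematic shape
\begin{equation*}
\langle \Delta^i f, f\rangle \;=\; \sum_{v \in \fB^{(0)}/\G} \bigl\langle (\Delta^{i-1}_v - c_{n,i}) f_v,\, f_v\bigr\rangle,
\end{equation*}
where $f_v$ denotes the natural restriction of $f$ to $(i-1)$-cochains on $\Lk(v)$ and $c_{n,i}$ is an explicit combinatorial constant. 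Establishing this identity amounts to an elementary but careful exchange of summation: every $(i+1)$-simplex of $\fB$ containing $v$ corresponds to an $i$-simplex of $\Lk(v)$, and summing the global coboundary terms by first grouping contributions around each vertex yields the local Laplacians plus a correction absorbed into $c_{n,i}$.

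The second ingredient is a spectral estimate for the links: for the spherical building of type $A_{n-1}$ over $\F_q$, the minimal positive eigenvalue $\lambda_{\min}(q)$ of the local Laplacian on $(i-1)$-cochains exceeds $c_{n,i}$ once $q$ is sufficiently large, say $q \geq q(n)$. Granted both ingredients, if $f$ is a $\G$-invariant harmonic $i$-cochain, decomposing each $f_v$ into its zero-eigenspace and positive-eigenspace components under $\Delta^{i-1}_v$ and applying Garland's formula forces $f_v$ to lie in the kernel of $\Delta^{i-1}_v$ for every $v$; a separate argument (already handled in the range $0 < i < n-1$, where the links are highly connected in the relevant degrees) shows this kernel is trivial, whence $f = 0$ and $H^i(X,\R) = 0$.

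The main obstacle is the spectral estimate on the spherical buildings of type $A_{n-1}$: one needs a quantitative lower bound on the smallest nonzero eigenvalue of the Laplacian on the flag complex of $\F_q^n$, with an explicit dependence on $q$ that eventually dominates $c_{n,i}$. This step is genuinely conceptual, resting either on explicit eigenvalue calculations exploiting the representation theory of $\GL_n(\F_q)$ and the decomposition of the cochain spaces under this action, or on an inductive bootstrapping argument that propagates the estimate from lower-rank spherical buildings appearing as links within links. By contrast, the verification of Garland's identity, while combinatorially intricate, is mechanical once the correct weighting conventions on simplices are fixed.
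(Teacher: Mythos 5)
Your proposal follows essentially the same route as the paper: reduce to vanishing of harmonic cochains via the combinatorial Hodge decomposition, use Garland's local-to-global identity decomposing $\langle\Delta f,f\rangle$ into contributions from the links (the paper's Theorem \ref{thmFI}), and then invoke a uniform lower bound on the smallest positive Laplacian eigenvalue of the type-$A$ spherical buildings $\Lk(v)$ over $\F_q$ (the paper's Theorem \ref{thm-G}, proved by the inductive bootstrapping you mention as one of the two possible routes). One small imprecision in your final step: it is not that the harmonic hypothesis forces $\tau_v f$ into the kernel of $\Delta_v$ on all of $C^{i-1}(\Lk(v))$ (that kernel is $Z^{i-1}(\Lk(v))$, which is far from trivial); rather, $\delta f=0$ forces $\tau_v f\in\sZ^{i-1}(\Lk(v))$, and combined with the Solomon--Tits vanishing $\tilde H^{i-1}(\Lk(v))=0$ this puts $\tau_v f$ entirely in the positive eigenspace, so the spectral lower bound plus $\langle\Delta f,f\rangle=0$ yields $(f,f)\le 0$ directly --- but this is the same argument, just reorganized.
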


Such vanishing theorems were originally conjectured by Serre \cite{Serre}. 
We will prove Theorem \ref{thm1.1} in Section \ref{sBTB} under a mild assumption on $\G$. 
In the same section we also explain how central division algebras 
naturally give rise to such 
groups. 
We should mention that the restriction on $q$ being sufficiently
large in Theorem \ref{thm1.1} was removed by Casselman
\cite{Casselman}, who proved the vanishing of the middle cohomology
groups by a completely different method, using representation theory of $p$-adic groups.

Garland's vanishing theorem plays an important role in some problems 
in representation theory;  
for example, it puts strict restrictions on the continuous cohomology of 
topological groups with coefficients in infinite dimensional representations (cf. \cite{Casselman} and $\S$\ref{ss4.3}). 
An application of Garland's theorem in arithmetic geometry arises 
in the calculation of the cohomology groups of certain algebraic varieties possessing 
rigid-analytic uniformization. More precisely, 
$\fB$ can be realized as the skeleton of Drinfeld's symmetric space $\Omega^n$, so the 
cohomology groups of the algebraic variety uniformized as $\Omega^n/\G$ 
are related to the cohomology groups of $\fB/\G$ (cf. \cite{SS}).  

\subsection{Outline of the paper}
Now we give an outline of the proof of Theorem \ref{thm1.1} and the contents of this paper. 

Let $X$ 
be a finite simplicial complex of dimension $n$. Let $w$ be a ``Riemannian metric'' on $X$, by which we mean, 
following \cite{Garland2}, a function from the non-oriented simplices of $X$ to the positive real numbers.  
Let $C^i(X)$ denote the $\R$-vector space of $i$-cochains of $X$ with values in $\R$. 
Define an inner product on $C^i(X)$:  
$$
(f, g)=\sum_\sigma w(\sigma) f(\hat{\sigma}) g(\hat{\sigma}), 
$$
where the sum is over all non-oriented $i$-simplices of $X$ and $\hat{\sigma}$ is 
an oriented simplex corresponding to $\sigma$. Let $d: C^i(X)\to C^{i+1}(X)$ 
denote the coboundary operator, and $\delta: C^i(X)\to C^{i-1}(X)$ denote the 
adjoint of $d$ with respect to $(\cdot ,\cdot )$.  Let $\harm{i}(X)\subset C^i(X)$ be the subspace 
of \textit{harmonic cocycles}; by definition, these are the  
$i$-cochains annihilated by both $d$ and $\delta$.  It is not hard to show that $H^i(X, \R)\cong \harm{i}(X)$. 
This isomorphism is a consequence of  the ``Hodge decomposition'' for $C^i(X)$. In Section \ref{SecGM},  
after recalling some standard terminology related to simplicial complexes, we prove this well-known fact.  

Thus, to prove $H^i(X, \R)=0$, it suffices to prove that there are no non-zero 
harmonic cocycles. Motivated by the work of Matsushima \cite{Matsushima},  
who reduced the study of harmonic forms on real locally symmetric spaces to the 
computation of the minimal eigenvalues of certain curvature transformations, Garland reduces 
the study of $\harm{i}(X)$ to estimating the minimal non-zero eigenvalue $m^i(X)$ of 
the linear operator $\Delta=\delta d$ acting on $C^i(X)$.  Section \ref{sFI} contains a key part of 
Garland's argument: it gives the precise relationship 
between the vanishing of $\harm{i}(X)$ and lower bounds on $m^i(X)$, and it also 
gives a method for estimating $m^i(X)$ inductively. 

In Section \ref{sBTB}, we describe the Bruhat-Tits building $\fB$ of $\SL_n(K)$ as a simplicial complex 
and explain how Theorem \ref{thm1.1} follows from the results in Section \ref{sFI}, assuming 
a certain lower bound on $m^i(\Lk(v))$ for vertices of $\fB$, where $\Lk(v)$ denotes the link of the vertex $v$. 
We relegate the proof of this lower bound, which is the most technical part of the paper, to 
Section \ref{sec3}. At the end of Section \ref{sBTB} we give a brief discussion of some of the 
more recent applications of Garland's method to producing examples of groups having Kazhdan's property (T). 

Based on numerical calculations, in $\S$\ref{ss4.1} we state a conjecture about the asymptotic behaviour of the eigenvalues of 
$\Delta$ acting on $C^i(\Lk(v))$ for vertices of $\fB$, and in $\S$\ref{ss4.3} we give some evidence for this conjecture. 
None of the results of this paper are original, except possibly those in $\S$\ref{ss4.3}.


\section{Simplicial cohomology and harmonic cocycles}\label{SecGM}

In this section we recall the basic definitions from the theory of simplicial cohomology 
and prove a combinatorial analogue of the Hodge decomposition theorem. This last 
theorem identifies simplicial cohomology groups with spaces of harmonic cocycles. 
Its importance for the proof of Theorem \ref{thm1.1} is that, instead of 
proving that $H^i(\fB/\G, \R)=0$ directly, we will actually show that there are no non-zero harmonic 
$i$-cocycles on $\fB/\G$.    

\subsection{Basic concepts} An (abstract) \textit{simplicial complex} is a collection $X$ of finite nonempty
sets, called simplices, such that if $s$ is an element of $X$, so is every nonempty
subset of $s$. A nonempty subset of a simplex 
$s$ is called a \textit{face} of $s$. A \textit{simplex of dimension $i$}, or simply an \textit{$i$-simplex}, is a simplex 
with $i+1$ elements. The vertex set $\Ver(X)$ of $X$ is the union of its $0$-simplices. 
A subcollection of $X$ that is itself a complex is called a \textit{subcomplex} of $X$. 
The \textit{dimension} of $X$ is the largest dimension of one of its simplices, or is infinite if there is no such largest dimension. 

Let $s$ be a simplex of $X$. The \textit{star} of $s$ in $X$,
denoted $\St(s)$, is the subcomplex of $X$ consisting of the union
of all simplices of $X$ having $s$ as a face. The \textit{link} of
$s$, denoted $\Lk(s)$, is the subcomplex of $\St(s)$ consisting of
the simplices which are disjoint from $s$. If one thinks of $\St(s)$
as the ``unit ball'' around $s$ in $X$, then $\Lk(v)$ is the ``unit
sphere'' around $s$.

Let $X$ and $Y$ be simplicial complexes. The \textit{join} of $X$
and $Y$ is the simplicial complex $X\ast Y$ such that $s\in X\ast Y$
if either $s\in X$ or $s\in Y$, or $s=x\ast y:=\{x_0,\dots, x_i,
y_0, \dots, y_j\}$, where $x=\{x_0,\dots, x_i\}\in X$ and
$y=\{y_0,\dots, y_j\}\in Y$. It is clear that $X\ast Y$ is a
simplicial complex and $\dim(X\ast Y)=\dim(X)+\dim(Y)+1$. Note that,
as a special case of this construction, $\St(s)=s\ast \Lk(s)$.

A specific ordering of the vertices of $s$, up to an even permutation,
is called an \textit{orientation} of $s$. Each positive dimensional simplex 
has two orientations. 
Denote the set of $i$-simplices by $\widehat{S}_i(X)$, and the set of
oriented $i$-simplices by $S_i(X)$. Note that $\widehat{S}_0(X)=S_0(X)=\Ver(X)$.  
For $s\in S_i(X)$, $\bar{s}\in S_i(X)$ denotes the same simplex but with
opposite orientation. An $\R$-valued \textit{$i$-cochain} on $X$ is
a function $f: S_i(X)\to \R$ which is alternating if $i\geq 1$, i.e., $f(s)=-f(\bar{s})$. 
(A $0$-cochain is just a function on $\Ver(X)$.)
The $i$-cochains naturally form an $\R$-vector
space which is denoted $C^i(X)$. If $i<0$ or $i>\dim(X)$, we set
$C^i(X)=0$.

The \textit{coboundary} operator is the linear transformation $d:C^i(X)\to C^{i+1}(X)$ defined by
\begin{equation}\label{eq-d}
df([v_0,\dots,
v_{i+1}])=\sum_{j=0}^{i+1}(-1)^jf([v_0,\dots,\hat{v}_j,\dots,
v_{i+1}]),
\end{equation}
where $[v_0,\dots,v_{i+1}]\in S_{i+1}(X)$ and the symbol $\hat{v}_j$ means that the vertex $v_j$ is to be
deleted from the array.
The kernel of $d:C^i(X)\to C^{i+1}(X)$ is called the subspace of \textit{$i$-cocycles} and denoted $Z^i(X)$. 
As one easily verifies  $d\circ d = 0$ (cf. \cite[p. 30]{Munkres}), so $dC^{i-1}(X)\subset Z^i(X)$. 
The \textit{$i$-th cohomology group of $X$} (with real coefficients) is
$$H^i(X):=Z^i(X)/dC^{i-1}(X).$$
Let $\mathbf{1}\in C^0(X)$ be the function defined by $\mathbf{1}(v)=1$ for all $v\in \Ver(X)$. The 
subspace $\R \mathbf{1}\subset C^0(X)$ spanned by $\mathbf{1}$ is the space of constant function. It is easy to see that 
 $\R \mathbf{1} \subset Z^0(X)$. One defines the \textit{reduced $i$-th cohomology group} $\tilde{H}^i(X)$ of
$X$ by setting $\tilde{H}^i(X)=H^i(X)$ if $i\geq 1$, and $\tilde{H}^0(X)=Z^0(X)/\R \mathbf{1}$. 
It is easy to show that the ``geometric 
realization'' of $X$ is connected if and only if $\tilde{H}^0(X)=0$; see \cite[p. 256]{Munkres}.

Now assume $X$ is finite, i.e., has finitely many vertices. To each simplex $s$ of $X$ assign a positive real number $w(s)=w(\bar{s})$, 
which we call the \textit{weight} of $s$. Define an inner-product on $C^i(X)$ by
\begin{equation}\label{eq-pairing}
(f,g)=\sum_{{s} \in \widehat{S}_i(X)} w(s)f(s)g(s),\qquad f,g\in C^i(X),
\end{equation}
where in $w(s)f(s)g(s)$ we choose
some orientation of $s$; this is well-defined since $f(s)g(s)=f(\bar{s})g(\bar{s})$. 

Let $s=[v_0,\dots, v_i]\in S_i(X)$ and $v\in \Ver(X)$. If the set
$\{v,v_0,\dots, v_i\}$ is an $(i+1)$-simplex of $X$, then we denote
by $[v, s]\in S_{i+1}(X)$ the oriented simplex $[v,v_0,\dots, v_i]$.
Define a linear transformation $\delta: C^i(X)\to C^{i-1}(X)$ by
\begin{equation}\label{eq-delta}
\delta f(s)=\sum_{\substack{v\in \Ver(X)\\ [v,s]\in S_i(X)}}
\frac{w([v,s])}{w(s)}f([v,s]).
\end{equation}
This operator is the adjoint of $d$ with respect to \eqref{eq-pairing}:
\begin{lem}\label{v-prop1.12}
If $f\in C^i(X)$ and $g\in C^{i+1}(X)$, then $(df,g)=(f,\delta g)$.
\end{lem}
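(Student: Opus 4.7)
The plan is to compute $(df,g)$ directly from the definitions and rearrange the resulting double sum. The main ingredient is a bookkeeping trick that uses the alternating property of cochains to absorb the sign $(-1)^j$ appearing in the coboundary formula, after which the sum reindexes naturally as a sum over incidence pairs (face, coface vertex).

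First I would expand
\[
(df,g) \;=\; \sum_{t \in \widehat{S}_{i+1}(X)} w(t)\, df(t)\, g(t),
\]
fixing for each non-oriented $(i{+}1)$-simplex a representative orientation $t = [v_0, \dots, v_{i+1}]$. Substituting \eqref{eq-d} produces a double sum indexed by $t$ and $j\in\{0,\dots,i+1\}$, with summand $(-1)^j w(t) f(s_j) g(t)$, where $s_j := [v_0, \dots, \hat v_j, \dots, v_{i+1}]$. The key observation is that bringing $v_j$ to the front of $t$ is $j$ adjacent transpositions, so by the alternating property of $g$,
\[
(-1)^j g(t) \;=\; g([v_j, s_j]).
\]
Since the weight depends only on the underlying non-oriented simplex, $w(t) = w([v_j,s_j])$, and the summand becomes $w([v_j,s_j]) f(s_j) g([v_j,s_j])$.

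Next I would switch the order of summation. The pairs $(t,j)$ are in bijection with pairs $(\tau, v)$ where $\tau$ is an $i$-simplex and $v$ is a vertex with $\{v\}\cup \tau \in \widehat{S}_{i+1}(X)$: given $(t,j)$, take $\tau = s_j$ and $v = v_j$; conversely, for each such $(\tau, v)$ the simplex $[v,\tau]$ is an $(i{+}1)$-simplex realized uniquely this way. After reindexing, the sum equals
\[
\sum_{\tau \in \widehat{S}_i(X)} f(\tau) \sum_{\substack{v \in \mathrm{Ver}(X)\\ [v,\tau]\in S_{i+1}(X)}} w([v,\tau])\, g([v,\tau]),
\]
and pulling a factor of $w(\tau)$ outside the inner sum and invoking \eqref{eq-delta} collapses this to $\sum_\tau w(\tau) f(\tau) \delta g(\tau) = (f, \delta g)$.

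The only real subtlety is sign/orientation bookkeeping: when the same $i$-simplex $\tau$ appears as $s_j$ for different $(t,j)$, the orientation that $\tau$ inherits from $t$ need not agree with an a priori chosen orientation of $\tau$, but since $f$ and $g$ are both alternating and $w$ is orientation-independent, the product $w([v,\tau]) f(\tau) g([v,\tau])$ is invariant under simultaneously reversing the orientations of $\tau$ and of $[v,\tau]$, so the reindexing is well-defined. I expect no deeper difficulty — the whole argument is a careful unwinding of definitions, with the alternating identity $(-1)^j g(t) = g([v_j, s_j])$ as the one genuinely useful input.
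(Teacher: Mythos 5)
Your proof is correct and is essentially the paper's own argument, just unpacked: the paper's two-line display likewise absorbs $(-1)^j$ into $g$ via the alternating property to write $(-1)^j g([v_0,\dots,v_{i+1}])=g([v_j,v_0,\dots,\hat v_j,\dots,v_{i+1}])$ and then reindexes the double sum over $(i{+}1)$-simplices and deleted vertices as a sum over $i$-simplices and incident cofaces. You have simply made explicit the sign bookkeeping and the bijection between index sets that the paper treats as self-evident.
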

\begin{proof}
\begin{align*}
&(df,g)\\ 
&=\sum_{\substack{s=[v_0,\dots,v_{i+1}] \in
\widehat{S}_{i+1}(X)}}
w(s)\sum_{j=0}^{i+1}f([v_0,\dots, \hat{v}_j,\dots,
v_{i+1}])g([v_j,v_0,\dots,\hat{v}_j,\dots, v_{i+1}])\\
&=\sum_{\substack{\sigma \in \widehat{S}_{i}(X)}}
w(\sigma)f(\sigma)\sum_{\substack{v\in \Ver(X) \\ [v,\sigma]\in
S_{i+1}(X)}}\frac{w([v,\sigma])}{w(\sigma)}g([v, \sigma])=(f,\delta g).
\end{align*}
\end{proof}

The kernel of $\delta: C^i(X)\to C^{i-1}(X)$ will be denoted by $\sZ^i(X)$. 

\subsection{Combinatorial Hodge decomposition}
The intersection $$\harm{i}(X):=\sZ^i(X)\cap Z^i(X)$$ in $C^i(X)$ 
is the subspace of \textit{harmonic $i$-cocycles}. (This subspace depends on the 
choice of the inner-product \eqref{eq-pairing}.)

\begin{rem}
The term \textit{harmonic} comes from the fact that $f\in C^i(X)$ is in $\harm{i}(X)$ if and only if 
$f$ is in the kernel of the operator $d\delta+\delta d$, which is a combinatorial analogue 
of the Laplacian. 
\end{rem}

\begin{thm}\label{lem1.11} 
Relative to the inner-product \eqref{eq-pairing}, we have the orthogonal direct sum decompositions
\begin{align}
\label{eq-ci} C^i(X) &=\harm{i}(X)\oplus dC^{i-1}(X)\oplus \delta C^{i+1}(X), \\ 
\label{eq-zi} Z^i(X) &=\harm{i}(X)\oplus dC^{i-1}(X), \\  
\label{eq-hi} \sZ^i(X) &=\harm{i}(X)\oplus \delta C^{i+1}(X). 
\end{align}
This implies  
\begin{equation}\label{eq-CohomHarm}
H^i(X)=Z^i(X)/dC^{i-1}(X)\cong \sZ^i(X)/\delta C^{i+1}(X)\cong \harm{i}(X).
\end{equation}
\end{thm}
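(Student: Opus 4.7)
The plan is to adapt the classical Hodge decomposition argument to this finite-dimensional combinatorial setting, using only $d^2=0$ together with the adjointness identity $(df,g)=(f,\delta g)$ proved in Lemma \ref{v-prop1.12}. By adjunction, $d^2=0$ immediately yields $\delta^2=0$.

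First I would check pairwise orthogonality of the three summands in \eqref{eq-ci}. For $\alpha\in C^{i-1}(X)$ and $\beta\in C^{i+1}(X)$,
\[
(d\alpha,\delta\beta)=(d^2\alpha,\beta)=0,
\]
so $dC^{i-1}(X)\perp \delta C^{i+1}(X)$. For $h\in\harm{i}(X)$ we have $(h,d\alpha)=(\delta h,\alpha)=0$ and $(h,\delta\beta)=(dh,\beta)=0$, so $\harm{i}(X)$ is orthogonal to both of the other two subspaces. In particular the sum on the right of \eqref{eq-ci} is an internal orthogonal direct sum.

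Next I would establish that this sum fills $C^i(X)$. Since $X$ is finite, $C^i(X)$ is finite-dimensional and the inner product \eqref{eq-pairing} is positive-definite (because $w(s)>0$), so it suffices to identify the orthogonal complement of $U:=dC^{i-1}(X)\oplus \delta C^{i+1}(X)$ with $\harm{i}(X)$. If $f\in U^\perp$, then $(\delta f,\alpha)=(f,d\alpha)=0$ for every $\alpha$, forcing $\delta f=0$; likewise $(df,\beta)=(f,\delta\beta)=0$ for every $\beta$, forcing $df=0$. Thus $f\in \harm{i}(X)$, and the reverse containment was just verified. This proves \eqref{eq-ci}.

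The remaining two decompositions are then extracted from \eqref{eq-ci}. If $f=h+d\alpha+\delta\beta\in Z^i(X)$, then $0=df=d\delta\beta$, so
\[
(\delta\beta,\delta\beta)=(d\delta\beta,\beta)=0,
\]
giving $\delta\beta=0$ and hence \eqref{eq-zi}; a symmetric argument using $\delta f=0$ and $(d\alpha,d\alpha)=(\delta d\alpha,\alpha)$ gives \eqref{eq-hi}. Finally, \eqref{eq-CohomHarm} follows by taking the quotient of \eqref{eq-zi} by $dC^{i-1}(X)$ and the quotient of \eqref{eq-hi} by $\delta C^{i+1}(X)$.

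I expect no substantial obstacle: the argument is a routine finite-dimensional mirror of the smooth Hodge theorem, with adjointness replacing integration by parts. The only point requiring care is that the inner product is genuinely positive-definite, which is immediate from the positivity of the weights $w(s)$ and the alternating convention, ensuring that orthogonal complements behave as expected.
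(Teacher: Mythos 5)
Your proof is correct and follows essentially the same route as the paper: establish pairwise orthogonality, identify $\harm{i}(X)$ as the orthogonal complement of $dC^{i-1}(X)\oplus\delta C^{i+1}(X)$ in the finite-dimensional inner product space $C^i(X)$, and then derive the other two decompositions. The only minor variation is in obtaining \eqref{eq-zi} and \eqref{eq-hi}: you write a cocycle as $h+d\alpha+\delta\beta$ and directly show $\delta\beta=0$ via $(\delta\beta,\delta\beta)=(d\delta\beta,\beta)=0$, whereas the paper first proves $C^i(X)=Z^i(X)\oplus\delta C^{i+1}(X)$ and $C^i(X)=\sZ^i(X)\oplus dC^{i-1}(X)$ and then compares these with \eqref{eq-ci}; both computations rest on the same adjointness identity.
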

\begin{proof} Let $f\in dC^{i-1}(X)$ and $g\in \delta C^{i+1}(X)$. We can write $f=df'$ and $g=\delta g'$ for some 
$f'\in C^{i-1}(X)$ and $g'\in C^{i+1}(X)$. Since $d^2=0$, using Lemma \ref{v-prop1.12} we get 
$$
(f, g)=(df', \delta g')=(d^2 f', g')=(0, g')=0. 
$$
Hence $dC^{i-1}(X)\perp \delta C^{i+1}(X)$. Now suppose $h\in C^i(X)$ is orthogonal to $dC^{i-1}(X)\oplus \delta C^{i+1}(X)$. Then 
$$
0=(h, df')=(\delta h, f')
$$
for all $f'\in C^{i-1}(X)$, which implies $\delta h=0$. Similarly, $0=(h, \delta g')$ implies that $dh=0$. 
Therefore, $h\in \harm{i}(X)$. In other words, the orthogonal complement of $dC^{i-1}(X)\oplus \delta C^{i+1}(X)$ in $C^i(X)$ 
is $\harm{i}(X)$. This proves \eqref{eq-ci}. 

A cochain $f\in C^i(X)$ is in the orthogonal complement of $ \delta C^{i+1}(X)$ if and only if 
$(f, \delta g')=(df, g')=0$ for all $g'\in C^{i+1}(X)$, which implies $df=0$. Thus 
\begin{equation}\label{eq-ci2}
C^i(X) =Z^i(X)\oplus \delta C^{i+1}(X). 
\end{equation}
A similar argument shows that the orthogonal complement of $dC^{i-1}(X)$ is $\sZ^i(X)$: 
\begin{equation}\label{eq-ci3}
C^i(X) =\sZ^i(X)\oplus dC^{i-1}(X). 
\end{equation}
Comparing \eqref{eq-ci2} and \eqref{eq-ci3} with \eqref{eq-ci}, we get \eqref{eq-zi} and \eqref{eq-hi}. 
Finally, \eqref{eq-zi} and \eqref{eq-hi} imply \eqref{eq-CohomHarm}. 
\end{proof}

\begin{defn}\label{defn1.7} Following \cite{Garland}, we call the linear
transformation $\Delta=\delta d$ on $C^i(X)$ the \textit{curvature transformation}. 
(What we denote by $\Delta$ in this paper is denoted by $\Delta^+$ in \cite{Garland} and \cite{Borel}.)
\end{defn}


By Lemma \ref{v-prop1.12}, for any $f, g\in C^i(X)$ we have 
$$
(\Delta f, g)= (\delta d f, g)= (d f, dg) = (f, \delta d g)= (f, \Delta g)
$$
and 
\begin{equation}\label{eq-see}
(\Delta f, f)= (df, df)\geq 0. 
\end{equation}
Hence $\Delta$ is a self-adjoint positive operator on $C^i(X)$, which implies that 
$C^i(X)$ has an orthonormal basis consiting of eigenvectors of $\Delta$, and the 
eigenvalues of $\Delta$ are nonnegative. 

\begin{lem}\label{lem1.7} Let $i\geq 0$. 
\begin{enumerate}
\item The subspace of $C^i(X)$ spanned by the eigenfunctions of $\Delta$ with positive eigenvalues 
is $\delta C^{i+1}(X)=Z^i(X)^\perp$.
\item If $H^i(X)=0$, then $\delta C^{i+1}(X)=\sZ^i(X)$. 
\end{enumerate}
\end{lem}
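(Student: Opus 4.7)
The plan is to obtain both assertions as fairly direct consequences of Theorem~\ref{lem1.11} (the combinatorial Hodge decomposition) together with the basic algebraic properties of $\Delta$ already recorded in the excerpt, namely that $\Delta$ is self-adjoint on $C^i(X)$ with nonnegative eigenvalues. Because of this, the spectral theorem supplies an orthogonal decomposition of $C^i(X)$ into $\ker\Delta$ and the span of the positive eigenspaces, and the task is just to identify $\ker\Delta$ with a familiar subspace.

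For part (1), my first move would be to compute $\ker\Delta$. By the identity $(\Delta f,f)=(df,df)$ from \eqref{eq-see}, $\Delta f=0$ is equivalent to $(df,df)=0$, i.e.\ $df=0$, so $\ker\Delta = Z^i(X)$. Since $\Delta$ is self-adjoint, $C^i(X)$ has an orthonormal eigenbasis, and the span of the eigenfunctions with strictly positive eigenvalues is precisely $(\ker\Delta)^\perp = Z^i(X)^\perp$. Finally, the orthogonal decomposition \eqref{eq-ci2} from the proof of Theorem~\ref{lem1.11} gives $Z^i(X)^\perp = \delta C^{i+1}(X)$, which identifies the two subspaces in the statement.

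For part (2), the plan is to combine \eqref{eq-CohomHarm} with \eqref{eq-hi}. The hypothesis $H^i(X)=0$ together with $H^i(X)\cong \harm{i}(X)$ forces $\harm{i}(X)=0$, and then \eqref{eq-hi} collapses to $\sZ^i(X)=\delta C^{i+1}(X)$.

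I don't anticipate a real obstacle: both parts are essentially bookkeeping on top of the Hodge decomposition already established. The only thing to be a little careful about is the initial spectral argument in (1), where one should note explicitly that $\Delta$ being self-adjoint on a finite-dimensional inner-product space with nonnegative spectrum guarantees the orthogonal splitting $C^i(X)=\ker\Delta \oplus (\ker\Delta)^\perp$ with the second summand spanned by positive-eigenvalue eigenvectors; everything else is a direct application of results proved earlier in the section.
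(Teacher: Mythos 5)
Your proof is correct and follows essentially the same route as the paper's: identify $\ker\Delta=Z^i(X)$ using \eqref{eq-see}, invoke the spectral decomposition to conclude the positive eigenspaces span $Z^i(X)^\perp$, appeal to \eqref{eq-ci2} to identify that orthogonal complement with $\delta C^{i+1}(X)$, and derive part (2) from \eqref{eq-CohomHarm} together with \eqref{eq-hi}. The one small point worth spelling out is the reverse inclusion $Z^i(X)\subseteq\ker\Delta$, which is immediate from $df=0\Rightarrow\Delta f=\delta d f=0$ (rather than from the chain of equivalences with $(df,df)=0$, which by itself only gives the forward direction cleanly); the paper states this explicitly.
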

\begin{proof} 
It is clear from \eqref{eq-see} that if $\Delta f=0$ then $df=0$. 
Hence $\ker(\Delta)\subseteq Z^i(X)$. Conversely, if $f\in Z^i(X)$ then $\Delta f=\delta df=\delta 0=0$.  
Therefore the subspace of $C^i(X)$ spanned by the eigenfunctions of $\Delta$ with positive eigenvalues 
is $Z^i(X)^\perp$. This latter subspace is $\delta C^{i+1}(X)$, as follows from \eqref{eq-ci2}. 
The second claim of the lemma follows from \eqref{eq-CohomHarm}. 
\end{proof}


\section{Garland's method}\label{sFI} 

In this section we discuss what is nowadays is called \textit{Garland's method}. 
Let $X$ be a finite simplicial complex. 
Let $$\la_{\min}^i(X):=\min_{v\in \Ver(X)} m^i(\Lk(v)).$$ 
Garland's method shows that a strong enough lower 
bound on $\la_{\min}^{i-1}(X)$ implies 
that $\harm{i}(X)=0$ (hence also $H^i(X, \R)=0$ by the Hodge decomposition 
discussed in the previous section). Moreover, the method 
gives a lower bound on $m^k(X)$ in terms of $\la_{\min}^{k-1}(X)$, 
hence allows to estimate $\la_{\min}^{i-1}(X)$ inductively in certain situations. 
(We will see an example of such an inductive estimate in Section \ref{sec3}.)
The observation that Garland's ideas from \cite{Garland} apply to any finite simplicial complex  
satisfying a certain condition is due to Borel; 
in this section we partly follow \cite[$\S$1]{Borel}. 

The proof of the main results has two parts: It starts with a decomposition of $(\Delta f, f)$  
into a sum $\sum_v (\Delta f_v, f_v)$ over the vertices of $X$, where $f_v$ 
is the restriction of $f$ to the ``unit ball'' around $v$; this is the content of $\S$\ref{ss3.1}.  
Then one bounds $(\Delta f_v, f_v)$ in terms of $m^{i-1}(\Lk(v))$ by studying the local version of the 
curvature transformation; this is the content of $\S$\ref{ss3.2}.  
We combine these two parts in $\S$\ref{ss3.3} to prove the main results.  
One of the subtleties is that to make this strategy work one has to choose 
an appropriate metric \eqref{eq-themetric}.  

In this section we assume that $X$ is a finite $n$-dimensional
complex which satisfies the following property:
\begin{center}
$(\star)$ each simplex of $X$ is a face of some $n$-simplex.
\end{center}
For $s\in S_i(X)$, let 
\begin{equation}\label{eq-themetric}
w(s) = \text{the number of (non-oriented) $n$-simplices containing $s$.}
\end{equation}
Note that, due to $(\star)$, $w(s)\neq 0$.


\subsection{Decomposition of $(\Delta f,f)$ into local factors}\label{ss3.1} We start with a simple lemma: 

\begin{lem}\label{lem-w} Let $s\in \widehat{S}_i(X)$ be fixed. Then
$$
\sum_{\substack{\sigma\in \widehat{S}_{i+1}(X)\\ s\subset
\sigma}}w(\sigma)=(n-i)w(s).
$$
\end{lem}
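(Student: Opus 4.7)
The plan is to prove the identity by double-counting pairs $(\sigma,\tau)$, where $\sigma\in\widehat{S}_{i+1}(X)$ satisfies $s\subset\sigma$ and $\tau\in\widehat{S}_n(X)$ satisfies $\sigma\subset\tau$. The counting uses the definition of $w$ given in \eqref{eq-themetric} together with property $(\star)$, which ensures that every simplex of $X$ is actually contained in some top-dimensional simplex.

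First I would fix $s\in\widehat{S}_i(X)$ and consider the set
$$\mathcal{P}(s) = \{(\sigma,\tau) : s\subset\sigma\subset\tau,\ \sigma\in\widehat{S}_{i+1}(X),\ \tau\in\widehat{S}_n(X)\}.$$
Counting by summing over $\sigma$ first gives $\sum_{\sigma\supset s} w(\sigma)$, which is exactly the left-hand side, since for each admissible $\sigma$ the number of $n$-simplices $\tau$ containing $\sigma$ is $w(\sigma)$ by definition.

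Next I would count $\mathcal{P}(s)$ by summing over $\tau$ first. For a fixed $n$-simplex $\tau\supset s$, I need to count $(i+1)$-simplices $\sigma$ with $s\subset\sigma\subset\tau$. Such a $\sigma$ is obtained by adjoining exactly one vertex of $\tau\setminus s$ to $s$, and since $|s|=i+1$ and $|\tau|=n+1$, there are precisely $n-i$ choices. Hence the total count is $(n-i)\cdot w(s)$.

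Equating the two counts yields the stated identity. The argument is entirely combinatorial; the only subtlety is verifying that no $\sigma$ is missed on the left-hand side because it fails to be contained in any $n$-simplex — this is exactly what property $(\star)$ rules out, which is why the metric \eqref{eq-themetric} is well-behaved under this sum. There is no real obstacle here: the lemma is a basic bookkeeping identity whose only role is to enable the later decomposition of $(\Delta f, f)$ into a sum of local contributions around vertices.
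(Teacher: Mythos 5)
Your double-counting argument is correct and is essentially the same as the paper's proof, which also observes that in the sum $\sum_{\sigma\supset s} w(\sigma)$ each $n$-simplex containing $s$ is counted exactly $n-i$ times. You have simply made the bookkeeping explicit by introducing the set of pairs $(\sigma,\tau)$.
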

\begin{proof}
Given an $n$-simplex $t$ such that $s\subset t$ there are
exactly $(n-i)$ simplices $\sigma$ of dimension $(i+1)$ such that
$s\subset \sigma\subset t$. Hence in the sum of the lemma we count
every $n$-simplex containing $s$ exactly $(n-i)$ times.
\end{proof}

For a fixed $v\in \Ver(X)$ define a linear transformation $\rho_v: C^i(X)\to C^i(X)$ by:
$$
\rho_vf (s)=\left\{
  \begin{array}{ll}
    f(s) & \hbox{if } v\in s; \\
    0 & \hbox{otherwise.}
  \end{array}
\right.
$$
It is clear that 
\begin{equation}\label{eq-rho^2}
\rho_v\rho_v=\rho_v,
\end{equation}
and for $f,g\in C^i(X)$
\begin{equation}\label{eq-(rho)}
(\rho_vf, g)= (\rho_vf, \rho_v g)=(f, \rho_vg). 
\end{equation}
Moreover, since any $i$-simplex has $(i+1)$-vertices, for $f\in C^i(X)$ we
have the equality
\begin{equation}\label{eq-obv}
\sum_{v\in \Ver(X)}\rho_vf=(i+1)f.
\end{equation}

\begin{lem}\label{lem-borel} For $f\in C^i(X)$, we have
$$
i\cdot (\Delta f, f)=\sum_{v\in \Ver(X)}(\Delta \rho_v f,\rho_v f) - (n-i)(f,f).
$$
\end{lem}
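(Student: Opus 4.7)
The plan is to rewrite both sides as sums over $(i+1)$-simplices and match them by a direct expansion of $d(\rho_v f)$ on each simplex. Since $\Delta = \delta d$ is self-adjoint with $(\Delta g, g) = (dg, dg)$ by Lemma~\ref{v-prop1.12}, the identity to prove is equivalent to
$$\sum_{v\in \Ver(X)} (d\rho_v f, d\rho_v f) = i\,(df, df) + (n-i)(f,f),$$
so everything can be phrased in terms of $d$ and summed against the weights $w$.

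First I would compute $d(\rho_v f)(\tau)$ for a fixed oriented $(i+1)$-simplex $\tau = [v_0,\dots,v_{i+1}]$. Directly from \eqref{eq-d} and the definition of $\rho_v$: if $v\notin \tau$ then $d(\rho_v f)(\tau)=0$; if $v=v_k\in\tau$ then $\rho_v f(\tau - v_j)$ vanishes precisely when $j=k$, so
$$d(\rho_v f)(\tau) = \sum_{j\neq k}(-1)^j f(\tau - v_j) = df(\tau) - (-1)^k f(\tau - v_k).$$
Squaring and summing over the $i+2$ vertices $v_k$ of $\tau$, the cross-terms telescope because $\sum_k (-1)^k f(\tau - v_k) = df(\tau)$:
$$\sum_{v\in\tau}\bigl(d(\rho_v f)(\tau)\bigr)^2 = (i+2)(df(\tau))^2 - 2 df(\tau)\cdot df(\tau) + \sum_{k=0}^{i+1} f(\tau - v_k)^2 = i(df(\tau))^2 + \sum_{k=0}^{i+1} f(\tau - v_k)^2.$$
This collapse is the key computational step.

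Second, I would multiply by $w(\tau)$ and sum over $\tau\in \widehat{S}_{i+1}(X)$, interchanging the order of summation:
$$\sum_v (d\rho_v f, d\rho_v f) = \sum_\tau w(\tau)\sum_{v\in\tau}\bigl(d(\rho_v f)(\tau)\bigr)^2 = i(df,df) + \sum_{\sigma\in \widehat{S}_i(X)} f(\sigma)^2 \sum_{\tau\supset\sigma} w(\tau).$$
Here the double sum in the last term is reindexed by pairs $(\sigma,\tau)$ with $\sigma = \tau - v_k$ and $\sigma\subset\tau$. Applying Lemma~\ref{lem-w}, $\sum_{\tau\supset\sigma} w(\tau) = (n-i)w(\sigma)$, so this piece becomes exactly $(n-i)(f,f)$.

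Putting the two halves together and rearranging yields the stated identity. The main obstacle is really just the bookkeeping in the first step: ensuring the sign $(-1)^k$ from removing $v_k$ from $\tau$ lines up so that the $(i+2)(df(\tau))^2$ term loses a $2(df(\tau))^2$ and leaves the clean factor of $i$; the rest is a mechanical reindexing that is tailor-made for the weight \eqref{eq-themetric}, which is precisely why that choice of Riemannian metric is imposed.
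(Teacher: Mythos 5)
Your proof is correct and follows essentially the same route as the paper: both establish the per-simplex identity $\sum_{v\in\sigma}\bigl(d\rho_v f(\sigma)\bigr)^2 = i\,df(\sigma)^2 + \sum_{s\subset\sigma}f(s)^2$, then multiply by $w(\sigma)$, sum over $(i+1)$-simplices, and invoke Lemma~\ref{lem-w} to convert the final term into $(n-i)(f,f)$. The only cosmetic difference is that the paper introduces incidence numbers $[\sigma:s]$ and completes a square over ordered pairs of faces, whereas you expand $d(\rho_{v_k}f)(\tau) = df(\tau) - (-1)^k f(\tau - v_k)$ directly from \eqref{eq-d} and let the cross terms telescope; your version is a bit leaner but not a genuinely different argument.
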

\begin{proof} First, to simplify the notation in our calculations we introduce new notation. 
Let $\sigma\in S_{i+1}(X)$ and $s\in S_i(X)$ be a face of $\sigma$. 
The orientation on $\sigma$ induces an orientation on $s$; we define $[\sigma:s]=\pm 1$ 
depending on whether this induces orientation is the original orientation of $s$ or its opposite. 
With this definition, for $f\in C^{i}(X)$ we have 
$$
df(\sigma) = \sum_{\substack{s\in \widehat{S}_i(X)\\ s\subset \sigma}}[\sigma:s]f(s),
$$
where for each face $s$ of $\sigma$ we choose some 
orientation. (Note that $[\sigma:s]f(s)$ does not depend on the choice of the orientation of $s$.)

Let $v\in \sigma$ be a fixed vertex and $s_0\in \widehat{S}_i(X)$ be the unique face of $\sigma$ 
not containing $v$. Then 
$$
df(\sigma) = d\rho_v f(\sigma)+[\sigma:s_0]f(s_0). 
$$
Hence 
$$
df(\sigma)^2 = d\rho_v f(\sigma)^2+2[\sigma:s_0]f(s_0)\sum_{v\in s\subset \sigma}[\sigma:s]f(s)+f(s_0)^2. 
$$
Summing both sides over all vertices of $\sigma$ we get 
$$
(i+2)df(\sigma)^2=\sum_{v\in \sigma}d\rho_v f(\sigma)^2+2\sum_{\substack{s, s'\subset \sigma\\ s\neq s'}}[\sigma:s][\sigma:s']f(s)f(s') 
+\sum_{s\subset \sigma} f(s)^2
$$
$$
=\sum_{v\in \sigma}d\rho_v f(\sigma)^2 + 2\sum_{\substack{s, s'\subset \sigma}}[\sigma:s][\sigma:s']f(s)f(s') -\sum_{s\subset \sigma} f(s)^2
$$
$$
=\sum_{v\in \sigma}d\rho_v f(\sigma)^2 + 2df(\sigma)^2 -\sum_{s\subset \sigma} f(s)^2.
$$
Hence
\begin{equation}\label{Borel-noproof}
i\cdot df(\sigma)^2 = \sum_{v\in \sigma}d\rho_v f(\sigma)^2 -\sum_{s\subset \sigma} f(s)^2.
\end{equation}
Now
\begin{align*}
i\cdot (\Delta f, f) &\overset{\mathrm{Lem.} \ref{v-prop1.12}}{=}  i\cdot (df, df) = i\sum_{\sigma\in \widehat{S}_{i+1}(X)} w(\sigma) df(\sigma)^2
\\
&\overset{\eqref{Borel-noproof}}{=}\sum_{\sigma\in \widehat{S}_{i+1}(X)} \sum_{v\in \sigma}w(\sigma) d\rho_v f(\sigma)^2-
\sum_{\sigma\in \widehat{S}_{i+1}(X)}\sum_{s\subset \sigma} w(\sigma)f(s)^2  
\\
&=\sum_{v\in \Ver(X)} (d\rho_v f,d\rho_v f) -
\sum_{s\in \widehat{S}_{i}(X)}f(s)^2 \sum_{\substack{\sigma\in \widehat{S}_{i+1}(X)\\ s\subset \sigma}} w(\sigma) 
\\
&\overset{\mathrm{Lem.} \ref{lem-w}}{=} \sum_{v\in \Ver(X)} (\Delta\rho_v f,\rho_v f)-(n-i)\sum_{s\in \widehat{S}_{i}(X)}w(s)f(s)^2 
\\
&= \sum_{v\in \Ver(X)} (\Delta\rho_v f,\rho_v f)-(n-i)(f,f). 
\end{align*}
\end{proof}


\subsection{Local curvature transformations}\label{ss3.2}

For
$f,g\in C^i(\Lk(v))$ define their inner-product by
\begin{equation}\label{eq-locp}
(f,g)_v=\sum_{s\in \widehat{S}_i(\Lk(v))}w_v(s)\cdot f(s)\cdot
g(s),
\end{equation}
where $w_v(s)$ is the number of $(n-1)$-simplices in $\Lk(v)$
containing $s$. Note that $\Lk(v)$ is an
$(n-1)$-dimensional complex satisfying $(\star)$. 
Another simple observation is that for a simplex $\sigma$ in $\Lk(v)$ there is a one-to-one correspondence between
the $n$-simplices of $X$ containing $[v,\sigma]$ and the
$(n-1)$-simplices of $\Lk(v)$ containing $\sigma$. Hence
\begin{equation}\label{eq-w_vw}
w_v(\sigma)=w([v,\sigma]) \quad \text{for any }\sigma\in \Lk(v).  
\end{equation}

Let $$d_v: C^i(\Lk(v))\to C^{i+1}(\Lk(v))$$ be the 
coboundary operator acting on the cochains of the finite simplicial complex $\Lk(v)$. 
Let $\delta_v$ be the adjoint of $d_v$ with respect to \eqref{eq-locp}, 
and let $$\Delta_v:=\delta_vd_v.$$ 

Lemma \ref{lem-borel} essentially decomposes $\Delta$ into a sum of its restrictions $\Delta\rho_v$ to $\St(v)$ over all vertices.
We want to relate $\Delta\rho_v$ to $\Delta_v$, and hence to relate the eigenvalues of $\Delta$ 
to the eigenvalues of its local version $\Delta_v$.  
For this we need to introduce one more linear operator: For $i\geq 1$, define 
\begin{align*}
&\tau_v:C^{i}(X)\to C^{i-1}(X),\\ 
& \tau_vf(s)=\left\{
  \begin{array}{ll}
    f([v,s]), & \hbox{if $s\in S_{i-1}(\Lk(v))$;}\\
    0, & \hbox{otherwise.}
  \end{array}
\right.
\end{align*}

Given $f\in C^i(X)$, its restriction to $\Lk(v)$ defines a function in $C^i(\Lk(v))$, which 
by slight abuse of notation we denote by the same letter. With this convention, for  $f\in C^i(X)$ 
we can consider $d_v f$ and $\delta_v f$, which are now functions on $\Lk(v)$. Similarly, we 
can compute the pairing \eqref{eq-locp} on $C^i(X)$. 

\begin{lem}\label{prop7.12} Assume $i\geq 1$. 
For $f, g\in C^i(X)$, we have
$$(\tau_vf,\tau_vg)_v=(\rho_vf,\rho_vg).$$
\end{lem}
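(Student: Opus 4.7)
The plan is to expand both sides using the definitions and observe that they are literally the same sum indexed differently.

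First I would unwind the definitions. On the left, since $\tau_v f$ vanishes outside $\widehat{S}_{i-1}(\Lk(v))$, the local pairing reduces to
\[
(\tau_v f, \tau_v g)_v = \sum_{s \in \widehat{S}_{i-1}(\Lk(v))} w_v(s)\, f([v,s])\, g([v,s]),
\]
where for each non-oriented $s$ we fix an arbitrary orientation (the product $f([v,s])g([v,s])$ does not depend on this choice since $f,g$ are alternating). On the right, $\rho_v f$ vanishes on $i$-simplices not containing $v$, so
\[
(\rho_v f, \rho_v g) = \sum_{\substack{\sigma \in \widehat{S}_i(X) \\ v \in \sigma}} w(\sigma)\, f(\sigma)\, g(\sigma).
\]

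Next I would set up the bijection between the two indexing sets. Sending $s \mapsto \{v\}\cup s$ gives a bijection between non-oriented $(i-1)$-simplices of $\Lk(v)$ and non-oriented $i$-simplices of $X$ containing $v$; by the very definition of $\Lk(v)$, a simplex $\sigma$ of $X$ lies in $\St(v)$ and contains $v$ if and only if $\sigma\setminus\{v\}$ is a simplex of $\Lk(v)$. Under this bijection, a chosen orientation of $s$ corresponds to the oriented simplex $[v,s]$ on the $X$-side.

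Finally, I would compare the summands term by term. By the identification \eqref{eq-w_vw}, we have $w_v(s)=w([v,s])$, and by the bijection above the values $f([v,s])g([v,s])$ on the left match $f(\sigma)g(\sigma)$ on the right with $\sigma=\{v\}\cup s$. Hence the two sums agree term by term, proving $(\tau_v f,\tau_v g)_v=(\rho_v f,\rho_v g)$.

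There is essentially no hard step; the only care required is in checking that the orientation ambiguity is harmless (which it is, because both sides are built from alternating cochains paired against themselves, so only the non-oriented simplices are truly being summed over) and in invoking \eqref{eq-w_vw} to match the weights. This lemma is really a bookkeeping identity that makes precise the idea that $\tau_v$ is the "shift by $v$" map identifying $\rho_v$-cochains of $X$ with cochains on $\Lk(v)$.
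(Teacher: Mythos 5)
Your proof is correct and takes essentially the same approach as the paper: expand both pairings from their definitions, use the bijection $s\mapsto\{v\}\cup s$ between $(i-1)$-simplices of $\Lk(v)$ and $i$-simplices of $X$ containing $v$, invoke \eqref{eq-w_vw} to match the weights, and note that $\rho_v f$ vanishes away from $\St(v)$.
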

\begin{proof} We have
\begin{align*}
(\tau_vf,\tau_vg)_v &=\sum_{\sigma\in
\widehat{S}_{i-1}(\Lk(v))}w_v(\sigma)\cdot \tau_vf(\sigma)\cdot
\tau_vg(\sigma)\\ 
&\overset{\eqref{eq-w_vw}}{=}
\sum_{s\in \widehat{S}_{i}(\St(v))}w(s)\cdot \rho_vf(s)\cdot
\rho_vg(s).
\end{align*}
Since $\rho_v f$ is zero away from $\St(v)$, the last sum can be extended
to the whole $\widehat{S}_{i}(X)$, so the lemma follows.
\end{proof}

\begin{lem} Let $f\in C^i(X)$. We have 
\begin{align}
\label{eq-taud} \tau_v d\rho_v f &=-d_v\tau_v f,  \quad i\geq 1,\\
\label{eq-taudelta} \tau_v \delta f &=-\delta_v\tau_v f,  \quad i\geq 2, \\
\label{eq-tauDelta} \tau_v \Delta \rho_v f &=\Delta_v\tau_v f,  \quad i\geq 1.
\end{align}
\end{lem}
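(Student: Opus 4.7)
The plan is to prove \eqref{eq-taud} and \eqref{eq-taudelta} by directly unfolding the definitions of $d$, $\delta$, $\rho_v$, and $\tau_v$, and then deduce \eqref{eq-tauDelta} as a formal consequence using $\Delta=\delta d$ and $\Delta_v=\delta_v d_v$.

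For \eqref{eq-taud}, I would fix $s=[v_0,\ldots,v_i]\in S_i(\Lk(v))$ and expand $\tau_v d\rho_v f(s)=d\rho_v f([v,v_0,\ldots,v_i])$ via \eqref{eq-d}. The term corresponding to deleting $v$ itself contributes $\rho_v f(s)=0$, since $v\notin s$. Each remaining term has the form $(-1)^j \rho_v f([v,v_0,\ldots,\hat v_{j-1},\ldots,v_i])$ with $j\in\{1,\ldots,i+1\}$; in each the simplex contains $v$, so $\rho_v$ acts as the identity, and the evaluation coincides with $\tau_v f$ on the corresponding face of $s$ in $\Lk(v)$. Re-indexing $k=j-1$ turns $(-1)^j$ into $(-1)^{k+1}$, producing exactly $-d_v\tau_v f(s)$.

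For \eqref{eq-taudelta}, I would pick $s\in S_{i-2}(\Lk(v))$ and expand $\tau_v\delta f(s)=\delta f([v,s])$ using \eqref{eq-delta} as a sum over vertices $u$ with $[u,v,s]\in S_i(X)$. The case $u=v$ does not occur (oriented simplices have distinct vertices), and for $u\neq v$ the condition is equivalent to $[u,s]\in S_{i-1}(\Lk(v))$. Transposing the first two slots gives $f([u,v,s])=-f([v,u,s])=-\tau_v f([u,s])$, which produces the minus sign. The weight ratio $w([u,v,s])/w([v,s])$ becomes $w_v([u,s])/w_v(s)$ by two applications of \eqref{eq-w_vw}, together with the fact that $w$ depends only on the underlying non-oriented simplex. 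Summing gives $-\delta_v\tau_v f(s)$.

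Finally, the third identity falls out formally: for $i\geq 1$ the cochain $d\rho_v f$ lies in $C^{i+1}(X)$ with $i+1\geq 2$, so \eqref{eq-taudelta} applies, and
\[
\tau_v\Delta\rho_v f \;=\; \tau_v\delta(d\rho_v f)\;\overset{\eqref{eq-taudelta}}{=}\;-\delta_v\tau_v(d\rho_v f)\;\overset{\eqref{eq-taud}}{=}\;-\delta_v(-d_v\tau_v f)\;=\;\Delta_v\tau_v f.
\]
The main obstacle, such as it is, is bookkeeping: tracking how the antisymmetry of oriented simplices generates the two minus signs (which cancel in the last step), ruling out the $u=v$ contribution in \eqref{eq-taudelta}, and verifying that the weight identity \eqref{eq-w_vw} really is what makes the local $\delta_v$ on $\Lk(v)$ mesh with the global $\delta$ on $X$. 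Beyond these sign and weight checks, the argument is a straightforward unfolding of definitions.
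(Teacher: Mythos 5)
Your proof is correct and follows essentially the same route as the paper: compute \eqref{eq-taud} and \eqref{eq-taudelta} by expanding the definitions of $d$, $\delta$, $\rho_v$, $\tau_v$ at an oriented simplex of $\Lk(v)$ and tracking the signs and the weight identity \eqref{eq-w_vw}, then obtain \eqref{eq-tauDelta} formally by composing the first two (applied to $d\rho_v f\in C^{i+1}(X)$, which is where the requirement $i\geq 1$ combined with the hypothesis $i\geq 2$ of \eqref{eq-taudelta} is used). The paper's version is slightly more compact, packaging the inner sum as $f([v,ds])$ via linear extension of $[\cdot]$, but the content is identical.
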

\begin{proof}
Let $s\in S_i(\Lk(v))$, $i\geq 1$. We have 
$$
\tau_v d\rho_v f(s)=d\rho_vf([v,s])=\rho_v(f(s)-f([v, ds]))=-f([v, ds]). 
$$
In the last term $ds$ denotes the image of $s$ under the boundary
operator and $[\cdot]$ is extended linearly to $\Z[S_i(\Lk(v))]$. Since $d$ restricted to $\Lk(v)$ coincides with $d_v$, 
we have $f([v, ds])=d_v\tau_v f(s)$. 
This proves \eqref{eq-taud}. 

Now assume $i\geq 2$ and let $s\in S_{i-2}(\Lk(v))$. We have 
$$
\tau_v \delta f(s)=\delta f([v, s])= \sum_{\substack{x\in \Ver(X)\\ [x,v, s]\in S_{i}(X)}}\frac{w([x,v,s])}{w([v, s])}f([x,v,s])
$$
$$
\overset{\eqref{eq-w_vw}}{=}- \sum_{\substack{x\in \Ver(\Lk(v))\\ [x, s]\in S_{i-1}(\Lk(v))}}\frac{w_v([x, s])}{w_v(s)}\tau_v f([x,s])=-\delta_v\tau_v f(s). 
$$
This proves \eqref{eq-taudelta}. 

Finally, 
$$
\tau_v \Delta \rho_v f =\tau_v \delta d \rho_v f\overset{\eqref{eq-taudelta}}{=}-\delta_v \tau_v d\rho_v f
\overset{\eqref{eq-taud}}{=} \delta_v d_v\tau_v f=\Delta_v\tau_v f. 
$$
This proves \eqref{eq-tauDelta}. 
\end{proof}

\begin{lem}\label{prop7.14} Assume $i\geq 1$. For $f\in C^i(X)$, we have
$$(\Delta \rho_v f,\rho_v f)=(\Delta_v\tau_vf,\tau_vf)_v.$$
\end{lem}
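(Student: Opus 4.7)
The plan is a short chain of substitutions using the machinery just developed. I would start with the right-hand side $(\Delta_v \tau_v f, \tau_v f)_v$ and apply \eqref{eq-tauDelta} to replace $\Delta_v \tau_v f$ by $\tau_v \Delta \rho_v f$; the assumption $i \geq 1$ is exactly what permits this. The right-hand side then becomes
\[
(\tau_v \Delta \rho_v f, \tau_v f)_v,
\]
a local pairing of two cochains both obtained by applying $\tau_v$ to elements of $C^i(X)$.

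Next I would invoke Lemma \ref{prop7.12} with the pair $(\Delta \rho_v f, f) \in C^i(X) \times C^i(X)$ to pass from the local pairing $(\cdot,\cdot)_v$ on $\Lk(v)$ back to the global pairing on $X$, obtaining
\[
(\tau_v \Delta \rho_v f, \tau_v f)_v = (\rho_v \Delta \rho_v f, \rho_v f).
\]
Finally, the self-adjointness of $\rho_v$ in \eqref{eq-(rho)} together with the idempotency $\rho_v^2 = \rho_v$ from \eqref{eq-rho^2} yields
\[
(\rho_v \Delta \rho_v f, \rho_v f) = (\Delta \rho_v f, \rho_v \rho_v f) = (\Delta \rho_v f, \rho_v f),
\]
which is the left-hand side of the claimed identity.

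There is no real obstacle at this stage: the substance of the lemma has already been absorbed into the identity \eqref{eq-tauDelta} and into Lemma \ref{prop7.12}, whose proofs shoulder the burden of tracking orientations and exploiting the weight identity \eqref{eq-w_vw}. The only subtlety here is a degree check — $\tau_v$ lowers degree by one, so one needs $i \geq 1$ for $\tau_v f \in C^{i-1}(\Lk(v))$ to be defined and for $\Delta_v$ acting on this space to make sense — which is precisely the hypothesis of the lemma.
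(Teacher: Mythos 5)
Your proof is correct and uses exactly the same three ingredients as the paper's proof — \eqref{eq-tauDelta}, Lemma \ref{prop7.12}, and \eqref{eq-(rho)} (with \eqref{eq-rho^2}) — just applied in reverse order, starting from the right-hand side instead of the left. This is the same argument.
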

\begin{proof} We have 
$$
(\Delta \rho_v f,\rho_vf)\overset{\eqref{eq-(rho)}}{=}(\rho_v\Delta \rho_v
f,\rho_vf)\overset{\mathrm{Lem.} \ref{prop7.12}}{=}(\tau_v\Delta \rho_vf,\tau_vf)_v 
\overset{\eqref{eq-tauDelta}}{=}(\Delta_v\tau_vf,\tau_vf)_v.
$$
\end{proof}

\begin{notn} Given a simplicial complex $Y$, let
$M^i(Y)$ and $m^i(Y)$ be the maximal and minimal non-zero
eigenvalues of $\Delta$ acting on $C^i(Y)$, respectively. Denote
$$
\la^i_{\max}(Y)= \max_{\substack{v\in \Ver(Y)}}M^i(\Lk(v)), 
$$
$$
 \la^i_{\min}(Y)= \min_{\substack{v\in
\Ver(Y)}}m^i(\Lk(v)).
$$
\end{notn}

\begin{lem}\label{lemDec5} Assume $i\geq 1$. For $f\in C^i(X)$, we have
$$
(\Delta_v \tau_v f, \tau_v f)_v\leq M^{i-1}(\Lk(v))\cdot(\tau_v f,
\tau_v f)_v.
$$
If $\tilde{H}^{i-1}(\Lk(v))=0$, then for $f\in \sZ^i(X)$ we have
$$
(\Delta_v \tau_v f, \tau_v f)_v\geq m^{i-1}(\Lk(v))\cdot(\tau_v f,
\tau_v f)_v.
$$
\end{lem}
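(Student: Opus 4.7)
The proof will be a spectral argument: both inequalities are Rayleigh-quotient bounds for the self-adjoint positive operator $\Delta_v$ on $C^{i-1}(\Lk(v))$, the only subtlety being that the lower bound requires $\tau_v f$ to be orthogonal to the kernel of $\Delta_v$. Note first that $\tau_v f$ lies in $C^{i-1}(\Lk(v))$, and that $\Lk(v)$ is a finite complex satisfying $(\star)$ equipped with the weighted inner product $(\cdot,\cdot)_v$; the arguments of Section \ref{SecGM} carried out on $\Lk(v)$ show that $\Delta_v = \delta_v d_v$ is self-adjoint and positive, so $C^{i-1}(\Lk(v))$ admits an orthonormal eigenbasis $\{e_k\}$ for $\Delta_v$ with eigenvalues $\lambda_k \geq 0$, and Lemma \ref{lem1.7}(1) applied to $\Lk(v)$ identifies $\ker(\Delta_v)$ with $Z^{i-1}(\Lk(v))$. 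Writing $\tau_v f = \sum_k c_k e_k$, I obtain
$$
(\Delta_v \tau_v f, \tau_v f)_v = \sum_k \lambda_k c_k^2, \qquad (\tau_v f, \tau_v f)_v = \sum_k c_k^2.
$$

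The upper bound is then immediate: every nonzero $\lambda_k$ satisfies $\lambda_k \leq M^{i-1}(\Lk(v))$, while those with $\lambda_k = 0$ contribute nothing to the left-hand sum. No hypothesis on $f$ is needed.

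For the lower bound, the inequality reduces to showing $c_k = 0$ whenever $\lambda_k = 0$, i.e., $\tau_v f \perp \ker(\Delta_v) = Z^{i-1}(\Lk(v))$; by the Hodge decomposition \eqref{eq-ci2} on $\Lk(v)$, this is equivalent to $\tau_v f \in \delta_v C^i(\Lk(v))$. When $i \geq 2$, equation \eqref{eq-taudelta} together with $f \in \sZ^i(X)$ gives $\delta_v \tau_v f = -\tau_v \delta f = 0$, so $\tau_v f \in \sZ^{i-1}(\Lk(v))$; the decomposition \eqref{eq-hi} on $\Lk(v)$ then reads $\sZ^{i-1}(\Lk(v)) = \harm{i-1}(\Lk(v)) \oplus \delta_v C^i(\Lk(v))$, and the hypothesis $\tilde H^{i-1}(\Lk(v)) = H^{i-1}(\Lk(v)) = 0$ combined with \eqref{eq-CohomHarm} forces $\harm{i-1}(\Lk(v)) = 0$, giving $\tau_v f \in \delta_v C^i(\Lk(v))$, as required.

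The main obstacle is the case $i = 1$, which is precisely why the hypothesis is stated in terms of \emph{reduced} cohomology: here $\tau_v f \in C^0(\Lk(v))$, the operator $\delta_v$ vanishes on $C^0$, and $\harm{0}(\Lk(v)) = Z^0(\Lk(v))$ is never zero. In this case \eqref{eq-taudelta} is not available, and one instead unwinds the definition of $\delta$: using \eqref{eq-w_vw} a short computation yields $\delta f(v) = -\tfrac{1}{w(v)} (\tau_v f, \mathbf{1})_v$, so $f \in \sZ^1(X)$ forces $(\tau_v f, \mathbf{1})_v = 0$. The assumption $\tilde H^0(\Lk(v)) = 0$ means $\Lk(v)$ is connected, hence $Z^0(\Lk(v)) = \R \mathbf{1}$, and orthogonality to $\mathbf{1}$ is exactly orthogonality to $\ker(\Delta_v)$, closing the argument.
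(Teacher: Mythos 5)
Your proposal is correct and follows essentially the same route as the paper: diagonalize $\Delta_v$ in an orthonormal eigenbasis, observe the upper bound is automatic, and reduce the lower bound to showing $\tau_v f$ is orthogonal to $\ker(\Delta_v)=Z^{i-1}(\Lk(v))$, handling $i\geq 2$ via \eqref{eq-taudelta} and the vanishing of $\harm{i-1}(\Lk(v))$, and $i=1$ via the direct computation $(\mathbf{1},\tau_v f)_v=-w(v)\delta f(v)=0$. The only cosmetic difference is that you invoke the Hodge decomposition \eqref{eq-hi} together with \eqref{eq-CohomHarm}, whereas the paper packages the same fact as Lemma \ref{lem1.7}(2).
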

\begin{proof}
We can choose an orthonormal basis $\{e_1, \dots, e_h\}$ of
$C^{i-1}(\Lk(v))$ with respect to $(\cdot , \cdot)_v$ which consists
of $\Delta_v$-eigenvectors. Let $\{\kappa_1,\dots, \kappa_h\}$ be
the corresponding eigenvalues. We have $\kappa_j\geq 0$ for all $j$. Write
$\tau_vf=\sum_{j=1}^h a_j e_j$. Then
$$
(\Delta_v \tau_v f, \tau_v f)_v  = (\sum_{j=1}^h a_j \kappa_j e_j,
\sum_{j=1}^h a_j e_j)_v=\sum_{j=1}^h a_j^2 \kappa_j 
$$
$$
\leq M^{i-1}(\Lk(v)) \sum_{j=1}^h a_j^2 =
M^{i-1}(\Lk(v)) (\tau_v f, \tau_v f)_v.
$$

The second claim will follow from a similar argument if we show
that $\tau_vf$ belongs to the subspace of $C^{i-1}(\Lk(v))$ spanned by $\Delta_v$-eigenfunctions
with \textbf{positive} eigenvalues. 
First assume $i\geq 2$. In this case ${H}^{i-1}(\Lk(v))=\tilde{H}^{i-1}(\Lk(v))=0$.  
Hence, thanks to Lemma \ref{lem1.7}, it is enough to show that $\tau_vf \in\cH^{i-1}(\Lk(v))$. 
Since by assumption $\delta f=0$, from \eqref{eq-taudelta} we get 
$\delta_v\tau_v f=-\tau_v \delta f=0$. Thus $\tau_v f\in \sZ^{i-1}(\Lk(v))$. 

Now assume $i=1$ and $\tilde{H}^0(\Lk(v))=0$. This last assumption is equivalent to $\Lk(v)$ 
being connected. In this case $Z^0(\Lk(v))$ is spanned by the function 
$\mathbf{1}\in C^0(\Lk(v))$ which assumes value $1$ on all vertices of $\Lk(v)$.  
By Lemma \ref{lem1.7}, we need to show that $\mathbf{1}$ is orthogonal to 
$\tau_v f$ with respect to the inner-product \eqref{eq-locp}. We compute
\begin{align}
\label{eq-1tuaf} (\mathbf{1}, \tau_v f)_v &=\sum_{x\in
\Ver(\Lk(v))}{w}_v(x)\cdot \tau_vf(x) \overset{\eqref{eq-w_vw}}{=}\sum_{x\in
\Ver(\Lk(v))}{w}([v,x]) f([v,x])\\ 
\nonumber &=-w(v)\delta f(v)=0. 
\end{align}
\end{proof}

\subsection{Fundamental inequalities}\label{ss3.3} Now we are ready to prove the main results of this section. 

\begin{thm}\label{thmFI} Assume $i\geq 1$. For $f\in C^i(X)$ we have 
$$
i\cdot (\Delta f, f)\leq \left((i+1)\cdot
\la^{i-1}_{\max}(X)-(n-i)\right)(f,f).
$$
If $\tilde{H}^{i-1}(\Lk(v))=0$ for every $v\in \Ver(X)$, then for $f\in \sZ^i(X)$ we have 
$$
i\cdot (\Delta f, f)\geq \left((i+1)\cdot
\la^{i-1}_{\min}(X)-(n-i)\right)(f,f).
$$
\end{thm}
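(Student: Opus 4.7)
The plan is to chain together the lemmas already proved in the section. Everything is essentially set up, so the proof is a short bookkeeping argument with two parallel cases (upper and lower bound).

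First I would start from the decomposition in Lemma \ref{lem-borel},
$$
i\cdot(\Delta f,f) = \sum_{v\in \Ver(X)} (\Delta\rho_v f,\rho_v f) - (n-i)(f,f),
$$
which reduces everything to controlling the local terms $(\Delta\rho_v f,\rho_v f)$. For each fixed vertex $v$, Lemma \ref{prop7.14} rewrites this local term as $(\Delta_v\tau_v f,\tau_v f)_v$ on the link, which is exactly what the eigenvalue bounds of Lemma \ref{lemDec5} can be applied to. Combining with Lemma \ref{prop7.12} (which gives $(\tau_v f,\tau_v f)_v = (\rho_v f,\rho_v f)$) and \eqref{eq-(rho)} (which gives $(\rho_v f,\rho_v f)=(\rho_v f,f)$), we get for arbitrary $f\in C^i(X)$
$$
(\Delta\rho_v f,\rho_v f) \leq M^{i-1}(\Lk(v))\cdot(\rho_v f,f) \leq \la^{i-1}_{\max}(X)\cdot(\rho_v f,f).
$$
Summing over $v$ and using \eqref{eq-obv}, which gives $\sum_v(\rho_v f,f) = (i+1)(f,f)$, yields
$$
\sum_{v\in \Ver(X)}(\Delta\rho_v f,\rho_v f) \leq (i+1)\la^{i-1}_{\max}(X)(f,f).
$$
Substituting into Lemma \ref{lem-borel} gives the upper bound.

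For the lower bound the same strategy works, but now we need to invoke the second half of Lemma \ref{lemDec5}, which requires two things: that $\tilde H^{i-1}(\Lk(v))=0$ (given by hypothesis) and that $f\in \sZ^i(X)$. The latter is precisely the assumption $\delta f = 0$, and ensures that $\tau_v f$ lies in the span of positive-eigenvalue eigenfunctions of $\Delta_v$, so that the minimum nonzero eigenvalue $m^{i-1}(\Lk(v))$ is an actual lower bound. Under these hypotheses,
$$
(\Delta\rho_v f,\rho_v f) \geq m^{i-1}(\Lk(v))\cdot(\rho_v f,f) \geq \la^{i-1}_{\min}(X)\cdot(\rho_v f,f),
$$
and summing over $v$ as before gives the lower bound.

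The main "obstacle" — really the only non-mechanical point — is making sure the hypothesis $f\in \sZ^i(X)$ is used in exactly the right place: it is what propagates via \eqref{eq-taudelta} to $\delta_v\tau_v f = 0$, which together with $\tilde H^{i-1}(\Lk(v))=0$ places $\tau_v f$ in the orthogonal complement of $\ker \Delta_v$ in $C^{i-1}(\Lk(v))$ and thus legitimizes the use of $m^{i-1}(\Lk(v))$ rather than $0$ as a lower bound. Once that is in place, both inequalities drop out by the same three-line computation.
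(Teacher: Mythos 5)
Your proof is correct and follows the same route as the paper: Lemma \ref{lem-borel} for the global-to-local decomposition, Lemma \ref{prop7.14} to pass from $\Delta\rho_v$ to $\Delta_v\tau_v$, Lemma \ref{lemDec5} for the eigenvalue bounds on each link, and then Lemma \ref{prop7.12}, \eqref{eq-(rho)}, \eqref{eq-obv} to resum to $(i+1)(f,f)$. The identification of where the hypothesis $f\in\sZ^i(X)$ enters (via \eqref{eq-taudelta} to ensure $\tau_v f$ lies in the span of positive eigenfunctions) is exactly the paper's reasoning inside Lemma \ref{lemDec5}.
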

\begin{proof} Combining Lemma \ref{lem-borel} with Lemma \ref{prop7.14}, we get 
\begin{equation}\label{eq-locexp}
i\cdot (\Delta f, f)=\sum_{v\in \Ver(X)}(\Delta_v \tau_v f,\tau_v f)_v - (n-i)(f,f).
\end{equation}
If $\tilde{H}^{i-1}(\Lk(v))=0$ for every $v\in \Ver(X)$ and $f\in \sZ^i(X)$, then 
by Lemma \ref{lemDec5}
\begin{align*}
\sum_{v\in \Ver(X)}(\Delta_v \tau_v f,\tau_v f)_v & \geq \sum_{v\in \Ver(X)}m^{i-1}(\Lk(v))(\tau_v f,\tau_v f)_v \\
& \geq \la^{i-1}_{\min}(X) \sum_{v\in \Ver(X)}(\tau_v f,\tau_v f)_v. 
\end{align*}
On the other hand, 
$$
\sum_{v\in \Ver(X)}(\tau_v f,\tau_v f)_v \overset{\mathrm{Lem.} \ref{prop7.12}}{=} \sum_{v\in \Ver(X)}(\rho_v f,\rho_v f) 
\overset{\eqref{eq-(rho)}}{=} \sum_{v\in \Ver(X)}(\rho_v f, f) 
$$
$$
= (\sum_{v\in \Ver(X)}\rho_v f, f)  \overset{\eqref{eq-obv}}{=} (i+1)(f,f). 
$$
Hence 
\begin{equation}\label{eq-suminequl}
\sum_{v\in \Ver(X)}(\Delta_v \tau_v f,\tau_v f)_v\geq \la^{i-1}_{\min}(X)   (i+1)(f,f).
\end{equation}
Substituting this inequality into \eqref{eq-locexp} we get the second claim of the theorem. The first claim 
follows from a similar argument. 
\end{proof} 

\begin{cor}\label{CorFI} Assume $i\geq 1$. If $\tilde{H}^{i-1}(\Lk(v))=0$ for every $v\in \Ver(X)$ and
$\la^{i-1}_{\min}(X)>\frac{n-i}{i+1}$, then $\harm{i}(X)=0$.
\end{cor}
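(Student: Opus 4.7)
The plan is to derive the corollary as a direct consequence of the second (lower bound) inequality in Theorem \ref{thmFI}. Take $f\in\harm{i}(X)$. By definition $\harm{i}(X)=\sZ^i(X)\cap Z^i(X)$, so on one hand $f\in\sZ^i(X)$, which means the hypotheses of the lower bound in Theorem \ref{thmFI} are satisfied (using also the assumption $\tilde H^{i-1}(\Lk(v))=0$ for every vertex $v$). On the other hand $df=0$, hence $\Delta f=\delta df=0$, and in particular $(\Delta f,f)=0$.

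Substituting these two facts into the second inequality of Theorem \ref{thmFI} gives
$$0 \;=\; i\cdot(\Delta f,f) \;\geq\; \bigl((i+1)\lambda^{i-1}_{\min}(X)-(n-i)\bigr)(f,f).$$
By hypothesis the coefficient $(i+1)\lambda^{i-1}_{\min}(X)-(n-i)$ is strictly positive, so the inequality forces $(f,f)\leq 0$. Since $(\cdot,\cdot)$ is a genuine inner product on $C^i(X)$, this yields $(f,f)=0$ and therefore $f=0$. As $f\in\harm{i}(X)$ was arbitrary, we conclude $\harm{i}(X)=0$.

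There is essentially no obstacle beyond invoking the theorem correctly: the only thing to verify carefully is that $f$ harmonic indeed lies in $\sZ^i(X)$ (so that the lower bound, not merely the upper bound, is applicable) and that $(\Delta f,f)=0$; both follow immediately from the definition of harmonic cocycle. The strict inequality in the hypothesis on $\lambda^{i-1}_{\min}(X)$ is used precisely to convert the inequality $(f,f)\leq 0$ into the conclusion $f=0$.
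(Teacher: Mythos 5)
Your proof is correct and follows essentially the same route as the paper: take $f \in \harm{i}(X)$, observe $f \in \sZ^i(X)$ and $(\Delta f, f) = (df, df) = 0$, plug into the second inequality of Theorem \ref{thmFI}, and use the strict hypothesis on $\la^{i-1}_{\min}(X)$ to force $(f,f) \leq 0$, hence $f = 0$.
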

\begin{proof} 
Let $f\in \harm{i}(X)$. Then $df=\delta f=0$. 
We obviously have $(\Delta f, f)=(df, df)=0$. 
Under our current assumptions, Theorem \ref{thmFI} then implies $(f,f)\leq 0$,
which implies $f=0$.
\end{proof}


\begin{thm}\label{thmFIeigen} Assume $i\geq 1$. 
We have 
$$ 
i\cdot M^i(X)\leq (i+1)\cdot \la^{i-1}_{\max}(X)-(n-i),
$$
and 
$$ 
i\cdot m^i(X)\geq (i+1)\cdot \la^{i-1}_{\min}(X)-(n-i).
$$
\end{thm}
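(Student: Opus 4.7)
The strategy is to specialize the two inequalities of Theorem \ref{thmFI} to eigenvectors of $\Delta$, since for an eigenvector the ratio $(\Delta f,f)/(f,f)$ is exactly the eigenvalue.

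For the upper bound, I would choose $f\in C^{i}(X)$ to be a nonzero eigenvector of $\Delta$ with eigenvalue $M^{i}(X)$; such an $f$ exists because $\Delta$ is self-adjoint. Then $(\Delta f,f)=M^{i}(X)(f,f)$, so substituting into the first (unconditional) inequality of Theorem \ref{thmFI} gives
$$
i\cdot M^{i}(X)(f,f)\leq \bigl((i+1)\la^{i-1}_{\max}(X)-(n-i)\bigr)(f,f),
$$
and dividing by $(f,f)>0$ yields the desired estimate.

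For the lower bound, I would pick a nonzero eigenvector $f$ of $\Delta$ whose eigenvalue equals $m^{i}(X)$. Since this eigenvalue is strictly positive, Lemma \ref{lem1.7}(1) puts $f$ in $\delta C^{i+1}(X)$. From $d^{2}=0$ and the adjointness in Lemma \ref{v-prop1.12} one immediately gets $\delta^{2}=0$, hence $\delta C^{i+1}(X)\subset \sZ^{i}(X)$; in particular $f\in\sZ^{i}(X)$. The hypotheses for the second inequality of Theorem \ref{thmFI} (namely $\tilde{H}^{i-1}(\Lk(v))=0$ for every $v\in\Ver(X)$) are thus met on $f$, so
$$
i\cdot m^{i}(X)(f,f)\geq \bigl((i+1)\la^{i-1}_{\min}(X)-(n-i)\bigr)(f,f),
$$
and again dividing by $(f,f)>0$ gives the claim.

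The only point that might feel nontrivial is the verification that the eigenvector attaining $m^{i}(X)$ actually lies in $\sZ^{i}(X)$, so that the conditional half of Theorem \ref{thmFI} applies; but as observed this is just a one-line consequence of Lemma \ref{lem1.7}(1) together with $\delta^{2}=0$. With that in hand, both bounds are simply the Rayleigh-quotient reading of Theorem \ref{thmFI}.
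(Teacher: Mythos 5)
Your upper bound argument is correct and is essentially what the paper does. The lower bound, however, has a real gap: the second inequality of Theorem~\ref{thmFI} has \emph{two} hypotheses, namely that $\tilde{H}^{i-1}(\Lk(v))=0$ for every $v\in\Ver(X)$ \emph{and} that $f\in\sZ^i(X)$. You verified only the second. The first is a condition on the complex $X$, not on $f$, and it is \emph{not} assumed in Theorem~\ref{thmFIeigen} --- the remark immediately following the theorem points this out explicitly. So citing Theorem~\ref{thmFI} as a black box does not give the claimed unconditional statement; it only yields the lower bound under the extra hypothesis $\tilde{H}^{i-1}(\Lk(v))=0$.

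The hypothesis is used in Lemma~\ref{lemDec5} to pass from $\tau_vf\in\sZ^{i-1}(\Lk(v))$ to the stronger fact that $\tau_vf$ lies in $\delta_vC^i(\Lk(v))$, the span of positive $\Delta_v$-eigenfunctions; in general $\delta_vC^i(\Lk(v))$ is a proper subspace of $\sZ^{i-1}(\Lk(v))$, and they coincide exactly when $H^{i-1}(\Lk(v))=0$ (Lemma~\ref{lem1.7}(2)). Your step $f=\delta g$ actually contains the key to bypassing this: from $f=\delta g$ and \eqref{eq-taudelta} one gets
$$
\tau_v f=\tau_v\delta g=-\delta_v\tau_v g\in\delta_vC^{i}(\Lk(v)),
$$
so $\tau_vf$ lies \emph{directly} in the span of positive $\Delta_v$-eigenfunctions by Lemma~\ref{lem1.7}(1), with no cohomological input. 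One can then run the Rayleigh estimate $(\Delta_v\tau_vf,\tau_vf)_v\ge m^{i-1}(\Lk(v))(\tau_vf,\tau_vf)_v$ as in the proof of Lemma~\ref{lemDec5}, sum over $v$ to obtain \eqref{eq-suminequl}, and combine with \eqref{eq-locexp}. This is precisely what the paper's proof does; the point of stating Theorem~\ref{thmFIeigen} separately from Theorem~\ref{thmFI} is exactly this removal of the cohomological hypothesis, so the argument has to descend one level into the machinery rather than quote Theorem~\ref{thmFI} directly.
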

\begin{proof}
Let $f\in C^i(X)$ be an eigenfunction of $\Delta$ with non-zero eigenvalue $c\neq 0$. 
Then, by Lemma \ref{lem1.7} (1), $f=\delta g$ for some $g\in C^{i+1}(X)$. 
By \eqref{eq-taudelta} $$\tau_v f=\tau_v \delta g=-\delta_v\tau_v g.$$ 
Hence, again by Lemma \ref{lem1.7} (1), $\tau_v f$ belongs to the subspace of $C^{i-1}(\Lk(v))$ spanned by 
the eigenfunctions of $\Delta_v$ with positive eigenvalues. As in the proof of Lemma \ref{lemDec5}, 
this implies $$(\Delta_v \tau_v f, \tau_v f)_v\geq m^{i-1}(\Lk(v)) (\tau_v f, \tau_v f)_v \geq \la^{i-1}_{\min}(X) (\tau_v f, \tau_v f)_v.$$ 
This inequality, as in the proof of Theorem \ref{thmFI},  implies \eqref{eq-suminequl}. 
Combining \eqref{eq-suminequl} with \eqref{eq-locexp} we get 
$$
ic(f, f)=i(\Delta f, f)\geq \left((i+1)\cdot \la^{i-1}_{\min}(X)-(n-i)\right)(f,f),  
$$
which implies the second inequality of the theorem. The first inequality can be proven by a similar argument.  
\end{proof}

\begin{rem}
Note that in Theorem \ref{thmFIeigen} we do not assume $\tilde{H}^{i-1}(\Lk(v))=0$. 
Also, since $\Delta$ is not the zero operator, we must have $M^i(X)>0$ for $i\leq n-1$, which 
implies $\la_{\max}^{i-1}(X)>\frac{n-i}{i+1}$. 
\end{rem}

\begin{notn} For $m\geq 1$, let $I_m$ denote the $m\times m$ identity matrix and let $J_m$
denote the $m\times m$ matrix whose entries are all equal to $1$.
The minimal polynomial of $J_m$ is $x(x-m)$.
\end{notn}

\begin{example}\label{Ex-P} Let $X$ be an $n$-simplex. We claim that the
eigenvalues of $\Delta$ acting on $C^i(X)$ are $0$ and $(n+1)$ for
any $0\leq i \leq n-1$. It is easy to see that $0$ is an eigenvalue,
so we need to show that the only non-zero eigenvalue of $\Delta$ is
$(n+1)$. It is enough to show
that $m^i(X)=M^i(X)=n+1$. First, suppose $i=0$. Since for any
simplex of $X$ there is a unique $n$-simplex containing it, one
easily checks that $\Delta$ acts on $C^0(X)$ as the matrix
$(n+1)I_{n+1}-J_{n+1}$. The only eigenvalues of this matrix are $0$
and $(n+1)$. Now let $i\geq 1$.
 The link of any vertex is an $(n-1)$-simplex, so by induction
$\la_{\min}^{i-1}(X)=\la^{i-1}_{\max}(X)=n$. 
Theorem \ref{thmFIeigen} implies
$$
i\cdot M^i(X)\leq (i+1)n-(n-i)=i(n+1)
$$
and
$$
i\cdot m^i(X)\geq (i+1)n-(n-i)=i(n+1).
$$
Hence $(n+1)\leq m^i(X)\leq M^i(X)\leq (n+1)$, which implies the
claim. (Of course, for this simple example it is possible, but not completely trivial, to compute
the eigenvalues of $\Delta$ directly.)
\end{example}

\begin{notn} Let $0\leq j\leq n-1$. Given $s\in \widehat{S}_j(X)$, 
its link $\Lk(s)$ in $X$ has dimension $n-(j+1)$ and satisfies $(\star)$. For $0\leq i\leq n-(j+1)$, denote 
$$
\la^{i,j}_{\max}(X)= \max_{\substack{s\in \widehat{S}_j(X)}}M^i(\Lk(s)), 
$$
$$
 \la^{i,j}_{\min}(X)= \min_{\substack{s\in \widehat{S}_j(X)}}m^i(\Lk(s)).
$$
With our earlier notation, we have $\la^{i,0}_{\max}(X)=\la^{i}_{\max}(X)$ and $\la^{i,0}_{\min}(X)=\la^{i}_{\min}(X)$. 
\end{notn}

\begin{cor}\label{corFIeigen} Let $0\leq j<i\leq n-1$. 
We have 
$$ 
(i-j)\cdot M^i(X)\leq (i+1)\cdot \la^{i-(j+1),j}_{\max}(X)-(j+1)(n-i).
$$
and 
$$ 
(i-j)\cdot m^i(X)\geq (i+1)\cdot \la^{i-(j+1),j}_{\min}(X)-(j+1)(n-i).
$$
\end{cor}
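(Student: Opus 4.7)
The plan is to prove both inequalities by induction on $j$, using Theorem \ref{thmFIeigen} at each step applied to a link of an appropriate simplex.

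The base case $j=0$ is precisely Theorem \ref{thmFIeigen}. For the inductive step, suppose the statement holds for $j-1$ (with $1 \leq j < i$), so that
$$(i-j+1)\cdot M^i(X)\leq (i+1)\cdot \la^{i-j,\,j-1}_{\max}(X)-j(n-i).$$
For every $s\in \widehat{S}_{j-1}(X)$, the link $\Lk(s)$ is an $(n-j)$-dimensional complex, and one easily checks that it still satisfies $(\star)$ (any simplex $\tau$ of $\Lk(s)$ gives $\tau\cup s$ which extends in $X$ to an $n$-simplex $T$, and then $T\setminus s$ is a top-dimensional simplex of $\Lk(s)$ containing $\tau$). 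Since $i-j\geq 1$, Theorem \ref{thmFIeigen} applies to $\Lk(s)$ and gives
$$(i-j)\cdot M^{i-j}(\Lk(s))\leq (i-j+1)\!\!\max_{v\in\Ver(\Lk(s))}\!\! M^{i-j-1}\bigl(\Lk_{\Lk(s)}(v)\bigr)-(n-i).$$

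The key identification is that $\Lk_{\Lk(s)}(v) = \Lk_X(s\cup\{v\})$, where $s\cup\{v\}$ runs through $j$-simplices of $X$ as $s$ and $v$ vary. Thus the maximum above is bounded by $\la^{i-(j+1),\,j}_{\max}(X)$, and taking the maximum over $s\in \widehat{S}_{j-1}(X)$ yields
$$\la^{i-j,\,j-1}_{\max}(X)\leq \frac{(i-j+1)\cdot \la^{i-(j+1),\,j}_{\max}(X)-(n-i)}{i-j}.$$
Substituting into the inductive hypothesis and clearing denominators, one obtains
$$(i-j)(i-j+1)\cdot M^i(X)\leq (i+1)(i-j+1)\la^{i-(j+1),\,j}_{\max}(X) - \bigl[(i+1)+j(i-j)\bigr](n-i).$$
The algebraic identity $(i+1)+j(i-j)=(j+1)(i-j+1)$ finishes the inductive step after dividing by $(i-j+1)$. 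The min inequality is proved by the same induction with $\min$ in place of $\max$ and the lower bound from Theorem \ref{thmFIeigen}.

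The main obstacle is essentially bookkeeping: ensuring that each intermediate complex still satisfies $(\star)$ so that Theorem \ref{thmFIeigen} applies, making the link-of-a-link identification carefully, and verifying the algebraic identity that makes the constants match the expected form $(j+1)(n-i)$. None of these is genuinely difficult, but the combinatorial collapse of two successive links into a link of a $j$-simplex is the conceptual heart of the argument.
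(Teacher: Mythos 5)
Your proof is correct and follows essentially the same route as the paper: induction on $j$, applying Theorem \ref{thmFIeigen} to the $(n-j)$-dimensional complex $\Lk(s)$ for $s\in\widehat{S}_{j-1}(X)$, and collapsing $\Lk_{\Lk(s)}(v)$ to $\Lk_X(s\cup\{v\})$. The only cosmetic difference is that the paper works out the $\min$ inequality and you work out the $\max$ inequality, and you make the algebraic identity $(i+1)+j(i-j)=(j+1)(i-j+1)$ explicit rather than leaving the substitution implicit.
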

\begin{proof} We will prove the second inequality. The first inequality can be proven 
by a similar argument. 

If $j=0$, then the claim is just Theorem \ref{thmFIeigen}. Now, given $j\geq 1$, assume 
that we proved the inequality for $j-1$: 
\begin{equation}\label{eqNN1}
(i-(j-1))\cdot m^i(X)\geq (i+1)\cdot \la^{i-j,j-1}_{\min}(X)-j(n-i).
\end{equation}
Let $s\in \widehat{S}_{j-1}(X)$. The dimension of $\Lk(s)$ is $n-j$, so by Theorem \ref{thmFIeigen} we have 
$$
(i-j)m^{i-j}(\Lk(s))\geq (i-j+1)\la^{i-j-1,0}_{\min}(\Lk(s))-((n-j)-(i-j)). 
$$
On the other hand, the link of a vertex $v\in \Lk(s)$ in $\Lk(s)$ is the same as the 
link of the $j$-simplex $[v, s]$ in $X$. Thus, 
$$
\la^{i-j-1,0}_{\min}(\Lk(s))\geq \la^{i-j-1,j}_{\min}(X),
$$
and 
$$
(i-j)m^{i-j}(\Lk(s))\geq (i-j+1)\la^{i-j-1,j}_{\min}(X)-(n-i). 
$$
Taking the minimum over all $s\in \widehat{S}_{j-1}(X)$, we get 
\begin{equation}\label{eqNN2}
(i-j)\la^{i-j,j-1}_{\min}(X)\geq (i-j+1)\la^{i-j-1,j}_{\min}(X)-(n-i). 
\end{equation}
Substituting \eqref{eqNN2} into \eqref{eqNN1}, gives the desired inequality for $j$. 
\end{proof}

The argument in the previous proof can be easily adapted to prove the following inequality:
$$
(i+1)\la^{i-1,0}_{\min}(X)-(n-i)\geq \frac{i}{i-j}\left((i+1)\la^{i-j-1,j}_{\min}(X)-(j+1)(n-i)\right). 
$$
Now substituting this inequality into the second inequality of Theorem \ref{thmFI}, we get: 
\begin{cor}\label{corNN1}
Assume $0\leq j< i\leq n-1$. 
If $\tilde{H}^{i-1}(\Lk(v))=0$ for every $v\in \Ver(X)$, then for $f\in \sZ^i(X)$ we have 
$$
(i-j)\cdot (\Delta f, f)\geq \left((i+1)\cdot \la^{i-j-1,j}_{\min}(X)-(j+1)(n-i)\right)(f,f).
$$
In particular, if $\la^{i-j-1,j}_{\min}(X)>\frac{(j+1)(n-i)}{(i+1)}$, then $\harm{i}(X)=0$. 
\end{cor}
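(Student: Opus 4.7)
The plan is to implement the strategy sketched by the paragraph immediately preceding the corollary: namely, first establish the auxiliary inequality
$$(i+1)\la^{i-1,0}_{\min}(X)-(n-i)\;\geq\;\frac{i}{i-j}\Bigl((i+1)\la^{i-j-1,j}_{\min}(X)-(j+1)(n-i)\Bigr),$$
and then substitute it into the second inequality of Theorem \ref{thmFI}, namely $i(\Delta f,f)\geq \bigl((i+1)\la^{i-1}_{\min}(X)-(n-i)\bigr)(f,f)$, which is available precisely under the standing hypotheses $\tilde{H}^{i-1}(\Lk(v))=0$ for all $v$ and $f\in \sZ^i(X)$. Multiplying both sides by $(i-j)/i$ then produces the first inequality asserted by the corollary.

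To establish the auxiliary inequality I would induct on $j$, mimicking the telescoping argument from the proof of Corollary \ref{corFIeigen}. The base case $j=0$ reduces to an equality on both sides. For the inductive step, the key ingredient is already available as \eqref{eqNN2}, which in the form needed reads
$$(i-j)\la^{i-j,j-1}_{\min}(X)\;\geq\;(i-j+1)\la^{i-j-1,j}_{\min}(X)-(n-i).$$
Multiplying this by $(i+1)/(i-j)$ and then subtracting $j(n-i)$ from both sides, one finds after collecting the coefficients of $(n-i)$ and invoking the identity $(i+1)+j(i-j)=(j+1)(i-j+1)$ that
$$(i+1)\la^{i-j,j-1}_{\min}(X)-j(n-i)\;\geq\;\frac{i-j+1}{i-j}\Bigl[(i+1)\la^{i-j-1,j}_{\min}(X)-(j+1)(n-i)\Bigr].$$
Multiplying this through by the positive factor $i/(i-j+1)$ and then chaining with the inductive hypothesis for $j-1$ causes the factors $(i-j+1)$ to telescope, yielding the auxiliary inequality for $j$. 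The main (very mild) obstacle is bookkeeping: one must verify the algebraic identity that makes the coefficients of $(n-i)$ collapse so as to produce the clean factor $(j+1)$ on the right.

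For the final ``in particular'' statement, any $f\in \harm{i}(X)=Z^i(X)\cap \sZ^i(X)$ satisfies $df=0$, hence $(\Delta f,f)=(df,df)=0$. Applying the first inequality of the corollary to such $f$ gives
$$0\;\geq\;\Bigl((i+1)\la^{i-j-1,j}_{\min}(X)-(j+1)(n-i)\Bigr)(f,f),$$
and the hypothesis $\la^{i-j-1,j}_{\min}(X)>(j+1)(n-i)/(i+1)$ makes the bracketed factor strictly positive, forcing $(f,f)\leq 0$ and therefore $f=0$. Conceptually there is no new content beyond the decomposition of Lemma \ref{lem-borel} and the local bounds of Lemma \ref{lemDec5} already used to prove Theorem \ref{thmFI} and Corollary \ref{corFIeigen}; the corollary is obtained by iterating those same bounds one vertex at a time down to $j$-dimensional simplices.
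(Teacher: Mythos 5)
Your proof is correct and follows essentially the same route the paper sketches: derive the auxiliary telescoping inequality by iterating \eqref{eqNN2} exactly as in the proof of Corollary \ref{corFIeigen}, substitute into the second inequality of Theorem \ref{thmFI}, and deduce the vanishing of $\harm{i}(X)$ from $(\Delta f, f)=0$ for harmonic $f$. The algebraic identity $(i+1)+j(i-j)=(j+1)(i-j+1)$ you invoke is correct, and the induction on $j$ with base case $j=0$ (where the auxiliary inequality is an equality) closes cleanly.
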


\begin{rem}\label{remAddedLast} Using \eqref{eqNN2} it is easy to check 
that $\la^{i-j-1,j}_{\min}(X)>\frac{(j+1)(n-i)}{(i+1)}$ implies 
$\la^{i-k-1,k}_{\min}(X)>\frac{(k+1)(n-i)}{(i+1)}$ for all $0\leq k\leq j$. 
Hence the strongest assumption for Corollary \ref{corNN1} is $\la^{0,i-1}_{\min}(X)>\frac{i(n-i)}{(i+1)}$. 
The advantage in trying to prove this last inequality, besides the fact that it implies all the others, 
is that the question about the vanishing of $H^i(X)$ reduces to estimating the minimal non-zero eigenvalues of laplacians on graphs. 
\end{rem}

For the purposes of proving Theorem \ref{thm1.1} we will need a variant of Corollary \ref{CorFI}. 
Let $X$ be an $n$-dimensional simplicial complex satisfying $(\star)$ but which is
not necessarily finite. Let $\G$ be a group \textit{acting} on $X$.
This means that $\G$ acts on the vertices of $X$ and preserves the
simplicial structure of $X$, i.e., whenever the vertices
$\{v_0,\dots, v_i\}$ of $X$ form an $i$-simplex, $0\leq i\leq
\dim(X)$, then for any $\gamma\in \G$ the vertices $\{\gamma
v_0,\dots,\gamma v_i\}$ also form an $i$-simplex. Consider the
following condition on the action of $\G$:
\begin{center}
$(\dag)\quad \St(v)\cap \St(\gamma v)=\varnothing$ for any $v\in \Ver(X)$ and any
$1\neq \gamma\in \G$.
\end{center}
In particular, this implies that the stabilizer of any simplex is trivial.

\begin{defn}
Let $X/\G$ be the simplicial complex whose vertices $\Ver(X/\G)$ are
the orbits $\Ver(X)/\G$ and a subset $\{\tilde{v}_0,\dots,
\tilde{v}_i\}$ of $\Ver(X/\G)$ forms an $i$-simplex, $0\leq i\leq
\dim(X)$, if we can choose a representative $v_j\in \Ver(X)$ from
the orbit $\tilde{v}_j$ for each $0\leq j\leq i$ so that that
$\{v_0, \dots, v_i\}$ form an $i$-simplex in $X$.
\end{defn}

It is obvious that $X/\G$ is a simplicial complex. Moreover, if
$(\dag)$ holds then $S_i(X/\G)$ is in bijection with the orbits
$S_i(X)/\G$ for any $i$. Indeed, as is easy to check, $(\dag)$
implies that if $\{v_0,\dots, v_{i}\}$ and $\{\gamma_0 v_0,\dots,
\gamma_{i} v_{i}\}$ are in $\widehat{S}_i(X)$ for some
$\gamma_0,\dots, \gamma_i\in \G$, then $\gamma_0=\cdots=\gamma_i$.

\begin{cor}\label{CorFII} Let $\G$ be a group acting on
$X$ so that $(\dag)$ is satisfied. Assume $X/\G$ is finite and $i\geq 1$. If $\tilde{H}^{i-1}(\Lk(v))=0$ for every $v\in \Ver(X)$ and
$\la^{i-1}_{\min}(X)>\frac{n-i}{i+1}$, then $H^i(X/\G)=0$.
\end{cor}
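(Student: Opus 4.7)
My plan is to reduce to the finite version Corollary \ref{CorFI} by applying it to the (finite) quotient $X/\G$, after transferring the hypotheses from $X$. The crucial observation is that condition $(\dag)$ makes the quotient map $X\to X/\G$ a local isomorphism of weighted simplicial complexes: for each $v\in\Ver(X)$ with image $\tilde v\in\Ver(X/\G)$, it restricts to a simplicial isomorphism $\St(v)\xrightarrow{\sim}\St(\tilde v)$ which preserves the metric \eqref{eq-themetric}. Once this is in place, every local quantity that appears in the hypotheses of Corollary \ref{CorFI} can be read off upstairs, so $\harm{i}(X/\G)=0$, and then \eqref{eq-CohomHarm} gives $H^i(X/\G)=0$.

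The first technical step is to verify $(\star)$ for $X/\G$ and the identity $w_{X/\G}(\tilde s)=w_X(s)$ for any $i$-simplex $\tilde s$ with representative $s$. As the excerpt already records, $(\dag)$ gives a bijection $S_i(X)/\G\xrightarrow{\sim}S_i(X/\G)$ for every $i$. Running the same argument with $n$-simplices containing a fixed face $s$, I would produce a bijection between the $n$-simplices in $X$ containing $s$ and the $n$-simplices of $X/\G$ containing $\tilde s$: surjectivity is immediate from the definition of $X/\G$ (one may $\G$-translate any pair of representatives $(t',s')$ to make the face component equal to $s$), and injectivity follows because $t_2=\gamma t_1$ with $s\subset t_1\cap t_2$ forces $t_1\in\St(v)\cap\St(\gamma^{-1}v)$ for any $v\in s$, hence $\gamma=1$ by $(\dag)$. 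This yields $(\star)$ for $X/\G$ and equality of weights.

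The same local bijection shows that the natural map $\Lk(v)\to\Lk(\tilde v)$ is an isomorphism of simplicial complexes, and by \eqref{eq-w_vw} it identifies the weights \eqref{eq-locp} on both sides. Hence the operators $\Delta_v$ and $\Delta_{\tilde v}$ are intertwined, so $\tilde{H}^{i-1}(\Lk(\tilde v))=\tilde{H}^{i-1}(\Lk(v))=0$ and $m^{i-1}(\Lk(\tilde v))=m^{i-1}(\Lk(v))\geq\la^{i-1}_{\min}(X)$ for every $\tilde v\in\Ver(X/\G)$. Taking the minimum gives $\la^{i-1}_{\min}(X/\G)>\tfrac{n-i}{i+1}$, so Corollary \ref{CorFI} applies to the finite complex $X/\G$ and the result follows. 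The only genuine content of the argument is the combinatorial bookkeeping in the second paragraph—checking that $(\dag)$ really does promote the quotient map to a local isomorphism of weighted complexes; once that is accepted, the corollary is purely formal.
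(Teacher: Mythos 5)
Your proof is correct and follows the same route as the paper: use $(\dag)$ to show that the quotient map is a local isomorphism of weighted complexes (so $\Lk(v)\cong\Lk(\tilde v)$ with matching weights), transfer the hypotheses to $X/\G$, and apply Corollary \ref{CorFI} followed by Theorem \ref{lem1.11}. The paper's proof is terser and leaves the verification that $(\dag)$ identifies stars and weights to the reader, but the content is identical to what you spell out.
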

\begin{proof} $X/\G$ is a finite $n$-dimensional complex
satisfying $(\star)$. Let $v\in \Ver(X)$ and let $\tilde{v}$ be the
image of $v$ in $X/\G$. Due to $(\dag)$, $\Lk(v)\cong
\Lk(\tilde{v})$. Hence 
$\tilde{H}^{i-1}(\Lk(\tilde{v}))=0$ for every $\tilde{v}\in
\Ver(X/\G)$ and $\la^{i-1}_{\min}(X/\G)>\frac{n-i}{i+1}$. By Corollary \ref{CorFI}, we have 
$\harm{i}(X/\G)=0$. Therefore, by Theorem \ref{lem1.11}, $H^i(X/\G)=0$. 
\end{proof}


\section{The Bruhat-Tits building of $\SL_n(K)$}\label{sBTB}

In this section we describe the Bruhat-Tits building of $\SL_n$, and the links of its vertices. Then, assuming a certain lower  
bound on the minimal non-zero eigenvalue of the curvature transformation 
acting on the links, we prove Theorem \ref{thm1.1}. 
This is an important application of the ideas developed in the previous section. 
(The required lower bound on the minimal non-zero eigenvalue will be proven in Section \ref{sec3}.)
We conclude the section with a brief discussion of some applications 
of Garland's method to produce examples of groups having the so-called property (T). 

\subsection{The building} Let $n\geq 0$ be a non-negative integer. 
Let $K$ be a complete discrete valuation field. Let $\cO$ be the
ring of integers in $K$, $\pi$ be a uniformizer, $\cO/\pi \cO\cong \F_q$, 
where $\F_q$ denotes the finite field with $q$ elements. 
be the residue field, and $q$ be the order of $k$.
Let $\cV$ be an $(n+2)$-dimensional vector space over $K$. A
subset $L\subset \cV$ which has a structure of a free $\cO$-module
of rank $(n+2)$ such that $L\otimes_\cO K=\cV$ is called a
\textit{lattice}. It is clear that if $L$ is a lattice then
$xL:=\{x\cdot \ell\ |\ \ell\in L\}$, $x\in K^\times$, is also a
lattice. We say that $L$ and $xL$ are \textit{similar}. Similarity
defines an equivalence relation on the set of lattices in $\cV$. We
denote the equivalence class of $L$ by $[L]$.

The \textit{Bruhat-Tits building of $\SL_{n+2}(K)$} is the
simplicial complex $\fB_n$ with set of vertices $\{[L]\ |\ L \text{ is
a lattice in }\cV\}$ where $\{[L_0],\dots,[L_i]\}$ form an $i$-simplex 
if  there is $L'_j\in [L_j]$ for each $j$ with
$$
\pi L_i'\subsetneqq L_0'\subsetneqq L_1'\subsetneqq\cdots\subsetneqq L'_i.
$$

To visualize $\fB_n$ in some way, fix a basis
$\{e_1,\dots, e_{n+2}\}$ of $\cV$. It is easy to see that the classes of 
lattices 
$$
\cO\pi^{a_1} e_1\oplus\cdots\oplus \cO\pi^{a_{n+2}} e_{n+2}, \qquad a_1,\dots, a_{n+2}\in \Z, 
$$
are in bijection with the elements of $\Z^{n+2}/\Z\cdot (1,1,\dots, 1)$. In particular, $\fB_n$ 
is infinite. Next, the vertices corresponding to $(a_1, \dots, a_{n+2})$ and $(b_1, \dots, b_{n+2})$ 
are adjacent in $\fB_n$ if and only if modulo $\Z\cdot (1,1,\dots, 1)$ we have $a_i\leq b_i\leq a_i+1$ for all $i$. 
For example, when $n=0$, these vertices form an infinite line as in Figure \ref{Fig1}. 
When $n=1$, these vertices give a triangulation of $\R^2$ part of which looks like Figure \ref{Fig2}. 
It is important to stress that the vertices that we considered above do not give all vertices of $\fB_n$, 
but only of a part of the building, called an \textit{apartment}. For example, it is not hard to see 
that $\fB_0$ is an infinite tree in which every vertex is adjacent to exactly $(q+1)$ other vertices. 
Similarly, $\fB_n$ is very symmetric 
in the sense that  the simplicial complexes $\St(v)$, $v\in \Ver(\fB_n)$, are all isomorphic to each other. 
To see what this complex is take a lattice $\cL$ corresponding to $v$. Then $\cL/\pi\cL=
\F_q^{n+2}=:V$, and the vertices of $\St(v)$ are in one-to-one
correspondence with the positive dimensional linear subspaces of $V$
($v$ itself corresponds to $V$). Let $\{V_0,\dots,V_i\}$ be the
linear subspaces corresponding to the vertices $\{v_0,\dots,v_i\}$
of $\St(v)$. Then $\{v_0,\dots,v_i\}$ form an $i$-simplex if and
only if the linear subspaces $\{V_0,\dots,V_i\}$ fit into an ascending sequence 
(possibly after reindexing):
$$
\cF: V_0\subset V_1\subset \cdots \subset V_i.
$$
The $i$-simplices of $\Lk(v)$ correspond to those $\cF$ for which 
$V_i\neq V$. Next, we consider more carefully $\Lk(v)$ as a simplicial complex. 

\begin{figure}
\begin{tikzpicture}[scale=1.5, semithick, inner sep=.5mm, vertex/.style={circle, fill=black}]

\node (0) at (0, 0) {(0,0)};
\node (1) at (1, 0) {(1,0)};
\node (2) at (2, 0) {(2,0)};
\node (3) at (3, 0) {};
\node (-1) at (-1, 0) {(0,1)};
\node (-2) at (-2, 0) {(0,2)};
\node (-3) at (-3, 0) {};

\path[]
(0) edge (1)
(1) edge (2)
(2) edge[dashed] (3)
(0) edge (-1)
(-1) edge (-2)
(-2) edge[dashed] (-3);
 
\end{tikzpicture}
\caption{}\label{Fig1}
\end{figure}

\begin{figure}
\begin{tikzpicture}[scale=1, semithick, inner sep=.5mm, vertex/.style={circle, fill=black}]

\node (00) at (0, 0) {(0,0,0)};
\node (20) at (2, 0) {(1,0,0)};
\node (11) at (1, 1) {(1,1,0)};
\node (-11) at (-1, 1) {(0,1,0)};
\node (-20) at (-2, 0) {(0,1,1)};
\node (-1-1) at (-1, -1) {(0,0,1)};
\node (1-1) at (1, -1) {(1,0,1)};

\path[]
(00) edge (20) (00) edge (11) (00) edge (-11) (00) edge (-20) (00) edge (-1-1) (00) edge (1-1);
\path[]
(20) edge (11) (11) edge (-11) (-11) edge (-20) (-20) edge (-1-1) (-1-1) edge (1-1) (1-1) edge (20);
 
\end{tikzpicture}
\caption{}\label{Fig2}
\end{figure}

\subsection{Complexes of flags}\label{ssCF}
Fix some $n\geq 0$. Let $V$ be a linear space of
dimension $n+2$ over the finite field $\F_q$. A \textit{flag} in $V$ is an ascending sequence
\begin{equation}\label{eq-flag}
\cF: F_0\subset F_1\subset \cdots \subset F_i
\end{equation}
of distinct linear subspaces $F_0,\dots, F_i$ of $V$ such that
$F_0\neq 0$ and $F_i\neq V$. The \textit{length} of $\cF$ is
$i$. We will refer to a flag of length $i$ as $i$-flag.
In particular, the $0$-flags are simply the proper non-zero linear
subspaces of $V$. Given two flags
$$
\cF: F_0\subset F_1\subset \cdots \subset F_i\quad \text{and} \quad
\cG: G_0 \subset G_1\subset \cdots \subset G_j
$$
we say that $\cG$ \textit{refines} $\cF$, and write $\cF\prec \cG$, if for
every $0\leq k\leq i$ there is $0\leq t\leq j$ such that $F_k=G_t$.
It is convenient also to have the empty flag $\varnothing$, which is
the empty sequence of linear subspaces; we put $-1$ for the length of 
$\varnothing$. The refinement defines a partial ordering
on the set of flags in $V$; the empty flag is refined by every
other flag.

Let $\cF$ be a fixed flag of length $\ell$. Consider the following simplicial
complex $X_\cF$ (when $\cF=\varnothing$ we will also denote this
complex by $X_\varnothing^n$). The vertices of $X_\cF$ are the
$(\ell+1)$-flags refining $\cF$. The vertices $v_0,\dots, v_h$ form
an $h$-simplex if the corresponding flags are all refined by a
single $(\ell+h+1)$-flag. It is easy to see that $X_\cF$ is indeed a
finite simplicial complex of dimension $n-1-\ell$. Since any flag can be refined into an $n$-flag, $X_\cF$ satisfies $(\star)$. 
Note that the link of a vertex of the Bruhat-Tits building $\fB_n$ is isomorphic to $X_\varnothing^n$. 

Now assume $\cF\neq \varnothing$. Let $\cF: F_0\subset F_1\subset
\cdots \subset F_{\ell}$, with $\ell\geq 0$.  Consider
the array of integers $(t_0, t_1,\dots, t_{\ell+1})$ defined by
\begin{equation}\label{eq-t}
t_j=\left\{
  \begin{array}{ll}
    \dim(F_0), &  \mbox{if }j=0; \\
    \dim(F_j)-\dim(F_{j-1}), &  \mbox{if }1\leq j\leq \ell; \\
    \dim(V)-\dim(F_\ell), &  \mbox{if }j=\ell+1.
  \end{array}
\right.
\end{equation}
It is not hard to see that 
\begin{equation}\label{eqXFdecomp}
X_\cF\cong X_\varnothing^{t_0-2}\ast
X_\varnothing^{t_1-2}\ast\cdots \ast X_\varnothing^{t_{\ell+1}-2},
\end{equation}
where $X_\varnothing^{-1}$ denotes the empty complex. 

\begin{lem}\label{lemn2.1}
If $\dim(X_\cF)>0$ then $X_\cF$ is connected.
\end{lem}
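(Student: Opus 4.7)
My plan is to reduce, by means of the join decomposition \eqref{eqXFdecomp}, to the single statement that $X_\varnothing^m$ is connected for every $m\ge 1$, and then establish that by an explicit length-two path through a sum of lines.

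First I would invoke the standard fact that if $Y_1,Y_2$ are nonempty simplicial complexes, then inside $Y_1\ast Y_2$ each vertex of $Y_1$ is joined by an edge to each vertex of $Y_2$; consequently any join containing at least two nonempty factors is automatically connected. Suppose $\cF\ne\varnothing$. Applying this to \eqref{eqXFdecomp} settles the case in which at least two of the integers $t_j$ from \eqref{eq-t} satisfy $t_j\ge 2$. Otherwise exactly one factor $X_\varnothing^{t_{j_0}-2}$ is nonempty, so $X_\cF\cong X_\varnothing^{t_{j_0}-2}$, and the hypothesis $\dim(X_\cF)>0$ forces $t_{j_0}-2\ge 1$. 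The case $\cF=\varnothing$ of the lemma is the same assertion for $X_\varnothing^n$ with $n\ge 1$. Both cases therefore reduce to the core claim that $X_\varnothing^m$ is connected whenever $m\ge 1$.

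For that core claim, let $V$ be an $\F_q$-vector space with $\dim V=m+2\ge 3$. The vertices of $X_\varnothing^m$ are the proper nonzero subspaces of $V$, and two vertices are adjacent if one strictly contains the other. Every proper nonzero subspace $U$ contains a line $L\subset U$, giving an edge between $L$ and $U$, so it suffices to connect any two distinct lines $L_1,L_2$. For this take $W:=L_1+L_2$; since $\dim W=2<\dim V$, $W$ is a proper nonzero subspace strictly containing each $L_i$, and the edges $\{L_1,W\}$ and $\{W,L_2\}$ provide the desired path.

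I do not anticipate a real obstacle: the argument is almost entirely bookkeeping. The only care needed is to respect the convention $X\ast\varnothing=X$ when some of the $t_j$ equal $1$, so that the corresponding factors $X_\varnothing^{-1}$ in \eqref{eqXFdecomp} are empty and silently drop out of the join; otherwise one might miscount how many nonempty factors appear.
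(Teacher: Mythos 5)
Your proof is correct and takes essentially the same route as the paper's: reduce via the join decomposition \eqref{eqXFdecomp} to the connectedness of $X_\varnothing^m$ for $m\ge 1$, then connect two vertices through a path using lines and the sum of two lines. The only difference is cosmetic bookkeeping in how the case analysis on the nonempty join factors is phrased.
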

\begin{proof} First we show that $X_\varnothing^n$ is connected if $n\geq 1$. Let
$x$ and $y$ be two vertices of $X_\varnothing^n$, and let $W_1$ and
$W_2$ be the corresponding subspaces of $V$. Choose a
$1$-dimensional subspace $L_i$ of $W_i$, $i=1,2$. Consider the
subspace $P:=L_1+L_2$ of $V$. Since $n\geq 1$, $P\neq V$, so
$P$ gives a vertex of $X_\varnothing^n$, which we denote by the same
letter. The vertex $P$ is adjacent to both $L_1$ and $L_2$, $L_1$
is adjacent to $x$, and $L_2$ is adjacent to $y$, so there is a
path from $x$ to $y$.

Now assume $\cF\neq \varnothing$ and $X_\cF$ is given by \eqref{eqXFdecomp}. If
$\dim(X_\cF)\geq 1$, then either at least two of
$X_\varnothing^{t_j-2}$'s are non-empty or at least one
$X_\varnothing^{t_j-2}$ has dimension $1$. In either case $X_\cF$ is
clearly connected.
\end{proof}

\begin{thm}\label{thm-G} Assume $N:=\dim(X_\cF)\geq 1$. Then for any $0\leq i\leq N-1$  
and $\eps>0$ there is a constant $q(\eps, n)$ depending only on
$\eps$ and $n$ such that $m^i(X_\cF)\geq N-i-\eps$ once $q>q(\eps, n)$.
\end{thm}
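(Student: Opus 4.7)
The plan is to argue by a double induction: an outer strong induction on the dimension $N = \dim X_\cF \geq 1$, and, for each fixed $N$, an inner induction on $i$ from $0$ up to $N-1$. The structural input is that for any vertex $v \in X_\cF$, corresponding to a one-step refinement $\cG$ of $\cF$, the link $\Lk(v)$ is itself a flag complex of the form $X_\cG$, of dimension $N-1$, to which the outer hypothesis applies. Note that Theorem \ref{thmFIeigen} imposes no connectedness assumption on the links, so we do not need to invoke Lemma \ref{lemn2.1} in running the induction.

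For the inductive step $i \geq 1$, the outer hypothesis applied to every $X_\cG$ gives, for any $\eps' > 0$ and all sufficiently large $q$,
\[
m^{i-1}(X_\cG) \geq (N-1)-(i-1)-\eps' = N-i-\eps',
\]
so taking the minimum over vertices $v$ yields $\la^{i-1}_{\min}(X_\cF) \geq N-i-\eps'$. Plugging into Theorem \ref{thmFIeigen} (with $N$ playing the role of $n$):
\[
i \cdot m^i(X_\cF) \geq (i+1)(N-i-\eps')-(N-i) = i(N-i)-(i+1)\eps'.
\]
Choosing $\eps' = i\eps/(i+1)$ gives $m^i(X_\cF) \geq N-i-\eps$, as desired.

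The main obstacle is the base case $i=0$: showing $m^0(X_\cF) \geq N-\eps$ for $q$ large. Since Theorem \ref{thmFIeigen} requires $i \geq 1$, this bound cannot be obtained through Garland's recursive machinery and must come from a direct spectral analysis of the weighted graph Laplacian on the $1$-skeleton of $X_\cF$. My plan is first to use the join decomposition \eqref{eqXFdecomp} to write $X_\cF \cong X_\varnothing^{t_0-2}\ast\cdots\ast X_\varnothing^{t_{\ell+1}-2}$ and to observe that the weighted Laplacian on $C^0$ of a join assembles from those of its factors (with a low-rank coupling between the subspaces of functions constant on each factor), reducing the problem to establishing $m^0(X_\varnothing^m) \geq m-\eps$ for each relevant $m$. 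For $X_\varnothing^m$ the $1$-skeleton is a $\GL_{m+2}(\F_q)$-invariant graph on the proper nonzero subspaces of $\F_q^{m+2}$, organized by dimension, and its weighted Laplacian admits explicit eigenvalue formulas expressible via $q$-binomial coefficients (or, equivalently, via decomposition into $\GL_{m+2}(\F_q)$-isotypic components); one expects every nonzero eigenvalue to be $m+O(1/q)$. Establishing this uniform asymptotic, and ruling out small stray eigenvalues as $\cF$ varies, is the technical heart of the argument and is the content of Section \ref{sec3}.
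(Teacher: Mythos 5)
Your inductive step for $i \geq 1$ is exactly the paper's argument, including the choice $\eps' = i\eps/(i+1)$, so that part checks out. But the base case $i = 0$ is where the real work happens, and your proposal goes astray there in two ways.

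First, your claim that the $i=0$ bound ``cannot be obtained through Garland's recursive machinery and must come from a direct spectral analysis'' is incorrect. It is true that Theorem \ref{thmFIeigen} as stated needs $i\geq 1$, because $\tau_v$ is only defined for $i\geq 1$. But the paper's Proposition \ref{prop4.15-15} circumvents this by introducing, for a $\Delta$-eigenfunction $f\in C^0(X_\cF)$ with eigenvalue $c$ and a real parameter $R$, the modified function $f_\alpha$ (scaling $f$ by $R$ on the vertices of a fixed type $\alpha$), and then running Garland's machinery on $df_\alpha \in C^1(X_\cF)$. The key input is the type structure of $X_\cF$ (via Lemma \ref{eq-yet}); with the specific choice $R=(N-c)/N$ one gets $\Delta f_\alpha(v)=0$ on vertices of type $\alpha$, which makes $\tau_v df_\alpha$ orthogonal to $\mathbf{1}$ on $\Lk(v)$ and allows the $m^0(\Lk(v))$ lower bound to enter. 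Summing over $v$ of type $\alpha$ and then over all $\alpha$ (using Lemma \ref{lemd31} to evaluate $\sum_\alpha (\Delta f_\alpha, f_\alpha)$) yields inequality \eqref{eq2.1}, from which $c\geq N-\eps$ falls out once $\la^0_{\min}(X_\cF)\geq N-1-\eps'$ is supplied by induction on $N$. Thus the $i=0$ case is handled recursively; only $N=1$ requires a hands-on eigenvalue computation (Lemma \ref{lem-n2.2}).

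Second, even setting aside that you are not reproducing the paper's route, your proposed direct spectral analysis is left entirely unexecuted and is nontrivial to carry out. You would need to (a) prove that the weighted $C^0$-Laplacian of a join $X_\cF \cong X_\varnothing^{t_0-2}\ast\cdots\ast X_\varnothing^{t_{\ell+1}-2}$ decomposes cleanly through the factors with respect to the metric \eqref{eq-themetric} (this is plausible but not verified, and the weights here are not the uniform ones that make join spectra easy), and (b) establish uniform eigenvalue asymptotics for the subspace-incidence graph of $\F_q^{m+2}$ via $\GL_{m+2}(\F_q)$-representation theory for all $m$. Neither step is sketched beyond a one-line assertion. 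Also note that the paper's explicit computation gives $m^0(X^1_\varnothing)=1-\sqrt{q}/(q+1)$, so the error is $O(1/\sqrt q)$, not $O(1/q)$ as you state; this does not sink the approach but shows you have not actually done the computation. As written, the technical heart of Theorem \ref{thm-G} is a gap in your proposal.
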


The proof of this theorem is quite complicated and will be given in Section \ref{sec3}. 

\begin{cor}\label{cor-vanishing} There is a constant $q(n)$ depending only of $n$ 
such that if $q>q(n)$, then $\tilde{H}^i(X_\cF)=0$ for all $0\leq i\leq N-1$.  
\end{cor}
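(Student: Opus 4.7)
The plan is to induct on $N = \dim(X_\cF) \geq 1$, feeding the eigenvalue lower bound of Theorem \ref{thm-G} into the vanishing criterion of Corollary \ref{CorFI} and then converting vanishing of $\harm{i}$ to vanishing of cohomology via the Hodge isomorphism \eqref{eq-CohomHarm}. The base case $N = 1$ asks only that $\tilde{H}^0(X_\cF) = 0$, i.e., that $X_\cF$ be connected, which is exactly Lemma \ref{lemn2.1}.

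For the inductive step, assume the statement for all flag complexes of dimension strictly less than $N$, where $N \geq 2$. The index $i = 0$ is again handled by connectedness, so fix $1 \leq i \leq N-1$. Directly from the definitions in $\S$\ref{ssCF}, for each vertex $v \in \Ver(X_\cF)$ corresponding to an $(\ell+1)$-flag $\cF_v$, the link $\Lk(v)$ identifies naturally with $X_{\cF_v}$, a flag complex of dimension $N-1$. The inductive hypothesis, together with Lemma \ref{lemn2.1} in the boundary case $i = 1$ where $\dim(\Lk(v)) = N-1 \geq 1$, then yields $\tilde{H}^{i-1}(\Lk(v)) = 0$ for every $v$, establishing the topological hypothesis of Corollary \ref{CorFI}.

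Fixing $\eps = 1/4$ and applying Theorem \ref{thm-G} to each $X_{\cF_v}$ gives $\la^{i-1}_{\min}(X_\cF) = \min_v m^{i-1}(\Lk(v)) \geq N - i - \eps$ whenever $q > q(\eps, n)$. A direct calculation shows $(N-i) - \tfrac{N-i}{i+1} = \tfrac{i(N-i)}{i+1} \geq \tfrac{1}{2} > \eps$ for all $N \geq 2$ and $1 \leq i \leq N-1$ (the minimum $\tfrac12$ being attained precisely at $N = 2$, $i = 1$), so $\la^{i-1}_{\min}(X_\cF) > \tfrac{N-i}{i+1}$. Corollary \ref{CorFI} then yields $\harm{i}(X_\cF) = 0$, and \eqref{eq-CohomHarm} upgrades this to $\tilde{H}^i(X_\cF) = H^i(X_\cF) = 0$; setting $q(n) := q(1/4, n)$ completes the induction. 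All substantive difficulty is concentrated in Theorem \ref{thm-G}, deferred to Section \ref{sec3}; the reduction performed here is routine modulo the identification $\Lk(v) \cong X_{\cF_v}$ and the elementary numerical inequality above.
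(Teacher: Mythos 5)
Your proof is correct and follows essentially the same route as the paper: induction on the dimension of the flag complex, applying Theorem \ref{thm-G} to the links (which are again flag complexes), and then invoking Corollary \ref{CorFI}. The only substantive difference is that you pin down an explicit uniform $\eps = 1/4$ by verifying $\frac{i(N-i)}{i+1} \geq \frac{1}{2}$ over the entire range $N \geq 2$, $1 \leq i \leq N-1$, whereas the paper simply says ``choosing $\eps$ small enough''; your computation shows that a single $\eps$ suffices for all $i$ and $N$ simultaneously, which is a useful clarification but not a different argument.
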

\begin{proof} 
We use induction on $N$ and $i$. 
When $N=1$ or $i=0$, the claim follows from Lemma \ref{lemn2.1}, since 
$\tilde{H}^0(X_\cF)=0$ is equivalent to $X_\cF$ being connected. Now assume $N>1$ and $i\geq 1$. 
For any $v\in \Ver(X_\cF)$, we have $\Lk(v)\cong X_\cG$ for some $\cG\succ \cF$. Since $\dim(X_\cG)=N-1$, 
by the induction assumption $\tilde{H}^{i-1}(X_\cG)=0$ for $q$ large enough. 
Next, choosing $\eps$ small enough in Theorem \ref{thm-G}, 
we can make
$$
\la^{i-1}_{\min}(X_\cF)> \frac{N-i}{i+1}. 
$$
Now the assumptions of  Corollary \ref{CorFI} are satisfied, so $\tilde{H}^i(X_\cF)=0$. 
\end{proof}

\subsection{Main theorem}
Since $\Lk(v)$ is isomorphic to the simplicial complex
$X^n_\varnothing$, the complex $\fB_n$ is $(n+1)$-dimensional and satisfies $(\star)$. 

\begin{thm}\label{thm3.1}
Let $\G$ be a group acting on $\fB_n$ so that $(\dag)$ is satisfied. Assume
$\fB_n/\G$ is finite. There is a constant $q(n)$ depending only on $n$
such that if $q>q(n)$ then $H^i(\fB_n/\G)=0$ for $1\leq i\leq n$.
\end{thm}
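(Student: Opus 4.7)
The plan is to apply Corollary \ref{CorFII} to the $(n+1)$-dimensional complex $\fB_n$. Since every vertex link $\Lk(v)\subset \fB_n$ is isomorphic to $X_\varnothing^n$, and any simplex of $\fB_n$ extends to a maximal $(n+1)$-simplex, $\fB_n$ satisfies $(\star)$. Thus for fixed $i$ with $1\leq i\leq n$, Corollary \ref{CorFII} reduces the proof of $H^i(\fB_n/\G)=0$ to verifying the two hypotheses: (a) $\tilde{H}^{i-1}(\Lk(v))=\tilde{H}^{i-1}(X_\varnothing^n)=0$ for every $v\in \Ver(\fB_n)$, and (b) $\la_{\min}^{i-1}(\fB_n)>\frac{(n+1)-i}{i+1}$.

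First I would dispose of (a) using Corollary \ref{cor-vanishing}: with $N=\dim(X_\varnothing^n)=n$ and $0\leq i-1\leq n-1$, that corollary gives a constant $q_1(n)$ so that $\tilde{H}^{i-1}(X_\varnothing^n)=0$ whenever $q>q_1(n)$. For (b), note that since all vertex links of $\fB_n$ are isomorphic to the common complex $X_\varnothing^n$, we have
\begin{equation*}
\la_{\min}^{i-1}(\fB_n)=m^{i-1}(X_\varnothing^n).
\end{equation*}
Theorem \ref{thm-G}, applied with $\cF=\varnothing$ and $N=n$, produces for any $\eps>0$ a constant $q_2(\eps,n)$ such that $m^{i-1}(X_\varnothing^n)\geq n-(i-1)-\eps=n-i+1-\eps$ whenever $q>q_2(\eps,n)$.

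It remains to check that the lower bound from (b) beats the threshold $\frac{n+1-i}{i+1}$ for a suitable (small) $\eps$. The required inequality is
\begin{equation*}
n-i+1-\eps>\frac{n+1-i}{i+1},
\end{equation*}
which rearranges to $(n+1-i)\cdot \frac{i}{i+1}>\eps$. For $1\leq i\leq n$ the left-hand side is at least $\tfrac{1}{2}$, so any $\eps<\tfrac{1}{2}$ works uniformly in $i$. Fixing such an $\eps$ and setting $q(n):=\max(q_1(n),\,q_2(\eps,n))$ taken over the finitely many values $1\leq i\leq n$ yields both (a) and (b) simultaneously, so Corollary \ref{CorFII} delivers $H^i(\fB_n/\G)=0$ for each such $i$.

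There is no serious obstacle here, because all the real work has been done: the analytic heart of the argument is Theorem \ref{thm-G}, whose proof is deferred to Section \ref{sec3}, and the translation from vanishing of harmonic cocycles on $\fB_n/\G$ to the eigenvalue bound on the links was packaged in Corollary \ref{CorFII}. The only minor point to be careful about is bookkeeping the dimension shift (\emph{i.e.} $\fB_n$ has dimension $n+1$, its vertex links dimension $n$, and one applies the inductive consequences of Theorem \ref{thm-G} at level $i-1$) and ensuring that $\eps$ and $q(n)$ are chosen independently of $i$ in the finite range $1\leq i\leq n$.
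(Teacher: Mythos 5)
Your proof is correct and follows essentially the same approach as the paper: reduce to Corollary \ref{CorFII} via the common link $\Lk(v)\cong X^n_\varnothing$, supply hypothesis (a) from Corollary \ref{cor-vanishing} and hypothesis (b) from Theorem \ref{thm-G}. You merely spell out the elementary arithmetic (that $(n+1-i)\tfrac{i}{i+1}\geq\tfrac12$ for all $1\leq i\leq n$, so a single $\eps<\tfrac12$ works uniformly) that the paper leaves implicit.
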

\begin{proof}
Since $\Lk(v)\cong X_\varnothing^n$ for any $v\in \Ver(\fB_n)$,
Theorem \ref{thm-G} and Corollary \ref{cor-vanishing} imply that there is a constant $q(n)$
depending only on $n$ such that if $q>q(n)$ then for any $1\leq
i\leq n$ we have $\tilde{H}^{i-1}(\Lk(v))=0$ and
$\la_{\min}^{i-1}(\fB_n)=m^{i-1}(X_\varnothing^n)>\frac{n+1-i}{i+1}$.
Now the claim follows from Corollary \ref{CorFII}.
\end{proof}

\begin{rem}\label{rem-ST} It is known that the
cohomology groups $\tilde{H}^i(X_\cF)$ vanish for $0\leq i\leq
N-1$, without any assumptions on $q$, by a general result
of Solomon and Tits; see Appendix II in \cite{Garland} for a proof. 
Assuming this result, to prove Theorem \ref{thm3.1} we only need 
the bound $m^i(X^n_\varnothing)\geq n-i-\eps$. On the other hand, the proof of   
Theorem \ref{thm-G} is inductive, and requires proving this bound for all $X_\cF$. 
Another observation is that to prove Theorem \ref{thm3.1} it is enough to 
prove $m^0(X_\cF)\geq N-\eps$ for all $\cF$. Indeed, the link of any simplex in $\fB_n$ 
is isomorphic to some $X_\cF$, so one can appeal to Corollary \ref{corNN1} 
to get the vanishing of the cohomology. 
\end{rem}

\begin{rem}\label{rem4.2} In $\S$\ref{ss4.1} we will compute 
that $m^0(X^1_\varnothing)>1/2$ and $m^0(X^2_\varnothing)>1$.
Hence for $n=1, 2$ Garland's method proves the vanishing of $H^1(\fB_n/\G)$ for
all $q$. On the other hand,
$m^1(X^2_\varnothing)=1/3$ when $q=2$. To apply Corollary \ref{CorFII} to show that $H^2(\fB_2/\G)=0$ we need
$\la^1_{\min}(\fB_2/\G)>1/3$, so 
we need to assume $q>2$.
\end{rem}

There is an abundance of groups $\G$ satisfying $(\dag)$. The most
important examples of such groups come from arithmetic. One possible 
construction proceeds as follows. Let $F=\F_q(T)$ be the field of rational functions 
in indeterminate $T$ with $\F_q$ coefficients. Fix a place $\infty=1/T$ of $F$. 
Let $A=\F_q[T]$ be the polynomial ring. Let $K=\F_q(\!(1/T)\!)$ be the completion
of $F$ at $\infty$. Let $D$ be a central division algebra over $F$
of dimension $(n+2)^2$. Assume $D$ is split at $\infty$, i.e.,
$D\otimes_F K\cong \mathrm{Mat}_{n+2}(K)$. Let $\cD$ be a
maximal $A$-order in $D$; see \cite{Reiner} for the definitions. Let $\cD^\times$ be the 
group of multiplicative units in $\cD$. 
The quotient $\cD^\times/\F_q^\times$ can be identified with 
a discrete, cocompact subgroup of $\PGL_{n+2}(K)$. Replacing $\cD^\times/\F_q^\times$ 
by a subgroup $\Gamma\subset \cD^\times/\F_q^\times$ of finite index if necessary, 
we get a group which naturally acts on $\fB_n$ and satisfies $(\dag)$. 
Moreover, the quotient $\fB_n/\G$ is finite. 
For these facts we refer to \cite[p. 140]{Laumon}, \cite{Li}, \cite{LSV}, \cite{Serre}.
Theorem
\ref{thm3.1} implies $H^i(\fB_n/\G)=0$ for all $1\leq i\leq n$. On
the contrary, $H^{n+1}(\fB_n/\G)$ is usually quite large. Its
dimension approximately equals the volume of $\PGL_{n+2}(K)/\G$
with respect to an appropriately normalized Haar measure on
$\PGL_{n+2}(K)$; see \cite{Serre}. The simplicial complexes $\fB_n/\G$ are often used 
in the construction of Ramanujan complexes; see
\cite{Li}, \cite{LSV}.

\subsection{Property (T)} Garland's method has been applied to prove that 
certain groups have Kazhdan's property (T).  

Let $\G$ be a group generated by a finite set $S$. Let $\pi:\G\to U(H_\pi)$ be a 
unitary representation. We say that $\pi$ almost has invariant vectors if for every $\eps>0$ 
there exists a non-zero vector $u_\eps$ in the Hilbert space $H_\pi$ such that 
$|\!|\pi(s)u_\eps-u_\eps|\!|\leq \eps |\!|u_\eps|\!|$ 
for every $s\in S$. 
The group $\G$ is said to have \textit{property (T)} if 
every unitary representation of $\G$ which almost has invariant vectors has a non-zero invariant vector.  

Property (T) has important applications to representation theory, 
ergodic theory, geometric group theory and the theory of networks. 
For example, Margulis used groups with property (T) to give the first explicit examples of expanding graphs 
and to solve the Banach-Ruziewicz problem that asks whether the Lebesgue measure is the only normalized rotationally invariant 
finitely additive measure on the $n$-dimensional sphere.
We refer to Lubotzky's book \cite{Lubotzky} for a discussion of property (T) and its applications. 

It is known that a group $\G$ has property (T) if and only if for any unitary representation $\pi$ of $\G$, the 
first cohomology group $H^1(\G, \pi)$ is zero. This suggests the following line of attack 
to prove that $\G$ has property (T). 
Suppose that $\G$ is the fundamental group of a finite simplicial complex $X$. 
By group cohomology, $H^1(\G, \pi)=H^1(X, E_\pi)$, where $E_\pi$ is a local system on $X$
associated to $\pi$. Then one can try to prove the vanishing of $H^1(X, E_\pi)$ by a 
generalization of Garland's method. This approach in the case when $X$ is a $2$-dimensional finite simplicial complex
was pursued independently by Ballmann and {\'S}wiatkowski \cite{BS}, Pansu \cite{Pansu}, and \.Zuk \cite{Zuk}. 
For example, in \cite{BS}, the authors prove the following theorem: Assume $X$ is a 
$2$-dimensional finite simplicial complex, $\Lk(v)$ is a connected graph for any vertex of $X$, and $\la^0_{\min}(X)>1/2$. 
Then $\G=\pi_1(X)$ has property (T). Note that these assumptions are the same as in Corollary \ref{CorFI} 
for $n=2$. They are fulfilled when $X$ is a finite quotient of a 2-dimensional Bruhat-Tits building. 
These results gave new explicit examples of groups with property (T) which 
were significantly different from the earlier known examples. 
In \cite{DJ1}, \cite{DJ2}, Dymara and Januszkiewicz applied a generalization of Garland's method to groups acting on buildings of arbitrary type 
and dimension (e.g. hyperbolic buildings), and produced examples of groups having property (T), not coming from locally symmetric spaces or euclidean
buildings.


\section{Complexes of flags}\label{sec3}

The main goal of this section is to prove Theorem \ref{thm-G}. 
The notation will be the same as in $\S$\ref{ssCF}. 
In particular, $V$ is a linear space of dimension $n+2$ over the finite field $\F_q$, and $\cF$ 
is a (possibly empty) flag in $V$ of length $\ell$. We denote 
$$
N=\dim(X_\cF). 
$$

The proof of Theorem \ref{thm-G} proceeds by induction on $N$ and $i$. The base case $N=1$
follows from a direct calculation. We will carry out this calculation in $\S$\ref{ss4.1}. 
In the same subsection we give some explicit 
examples which provide a sense of the complexity of the eigenvalues of $\Delta$ acting on $C^i(X_\cF)$. 
These examples suggest a remarkable asymptotic behaviour of the eigenvalues of $\Delta$ as $q\to \infty$, 
which we state as a conjecture. 

The inductive step, discussed in $\S$\ref{ss4.2}, has two parts. 
Assuming the claim holds for $i=0$ and all $N$, the proof of the general case 
quickly follows from the inequality in Theorem \ref{thmFIeigen}. 
On the other hand, the argument which proves the claim for $i=0$ and $N\geq 1$ is fairly intricate. 
The outline is approximately the following: We start with a $\Delta$-eigenfunction $f\in C^0(X_\cF)$ 
having eigenvalue $c>0$. The machinery developed in $\S$\ref{ss3.2} cannot be applied to this function, since 
we cannot apply the operator $\tau_v$ directly to $f$. Instead, we introduce a parameter $R\in \R$, 
and multiply the values of $f$ on an appropriate subset of $\Ver(X_\cF)$ by $R$. The resulting 
function $f_\alpha$ is no longer an eigenfunction of $\Delta$, but we get some 
flexibility because we can vary $R$. We apply 
the machinery of $\S$\ref{ss3.2} to $df_\alpha\in C^1(X_\cF)$. 
Choosing $R$ appropriately forces some miraculous cancellations, 
which in the end give the desired bound $c\geq N-\eps$. 

In $\S$\ref{ss4.3}, we prove some auxiliary results about the eigenvalues of 
curvature transformations. These results are not used elsewhere in the paper, and are given 
as some evidence for the conjecture in $\S$\ref{ss4.1}. 

\subsection{The base case and explicit examples}\label{ss4.1} For $N=1$ we 
need to consider only $\Delta$ acting on $C^0(X_\cF)$, since $0\leq i\leq N-1$. 

\begin{lem}\label{lem-n2.2} If $N=1$, then 
$m^0(X_\cF)$ is equal either to $1$ or $1-\frac{\sqrt{q}}{q+1}$.
\end{lem}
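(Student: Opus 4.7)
The plan is to exploit the join decomposition \eqref{eqXFdecomp} to identify $X_\cF$ explicitly as one of two graphs when $N=1$, and then read off the spectrum of $\Delta$ from classical facts.

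First I would observe that in \eqref{eqXFdecomp} a factor $X_\varnothing^{t_j-2}$ is empty precisely when $t_j=1$, and otherwise has dimension $t_j-2\ge 0$. Since $\dim(Y\ast Z)=\dim Y+\dim Z+1$ for nonempty $Y,Z$, the hypothesis $N=1$ together with $\sum_j t_j=n+2$ leaves exactly two arithmetic possibilities for the tuple $(t_0,\dots,t_{\ell+1})$: either (A) one entry equals $3$ and all others equal $1$, in which case $X_\cF\cong X_\varnothing^1$; or (B) exactly two entries equal $2$ and the rest equal $1$, in which case $X_\cF\cong X_\varnothing^0\ast X_\varnothing^0$. (The case $\cF=\varnothing$ is subsumed in (A) with $n=1$.)

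Next I would identify both graphs concretely. The vertices of $X_\varnothing^1$ are the $1$- and $2$-dimensional subspaces of a $3$-dimensional $\F_q$-space, with an edge whenever one contains the other; this is exactly the point--line incidence graph of $\P^2(\F_q)$. The complex $X_\varnothing^0$ consists of the $q+1$ lines of $\F_q^2$ with no edges, so $X_\varnothing^0\ast X_\varnothing^0$ is the complete bipartite graph $K_{q+1,q+1}$. Both graphs are $(q+1)$-regular. With the metric \eqref{eq-themetric} every edge has weight $1$ and every vertex has weight $q+1$, and a short calculation from \eqref{eq-d} and \eqref{eq-delta} yields the normalized Laplacian
$$\Delta f(v)=f(v)-\frac{1}{q+1}\sum_{u\sim v}f(u),$$
so $\Delta=I-\tfrac{1}{q+1}A$, where $A$ is the adjacency matrix.

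To conclude I invoke the classical spectra. For $K_{q+1,q+1}$ the adjacency eigenvalues are $\pm(q+1)$ (each simple) and $0$ (with multiplicity $2q$), yielding $\Delta$-eigenvalues $\{0,1,2\}$ and $m^0(X_\cF)=1$. For the incidence graph of the symmetric $2$-$(q^2+q+1,q+1,1)$ design $\P^2(\F_q)$, the identity $NN^T=qI+J$ (for $N$ the point--line incidence matrix) has eigenvalues $(q+1)^2$ and $q$, so the adjacency eigenvalues are $\pm(q+1)$ and $\pm\sqrt{q}$, giving $\Delta$-eigenvalues $\{0,\,1-\sqrt{q}/(q+1),\,1+\sqrt{q}/(q+1),\,2\}$ and $m^0(X_\cF)=1-\sqrt{q}/(q+1)$. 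The only nontrivial step is the identification in Case (A) with the incidence graph of a finite projective plane and the appeal to its well-known spectrum; the combinatorial splitting into two cases and the diagonalization of $\Delta$ on a regular graph are routine.
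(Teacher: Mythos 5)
Your proof is correct and takes essentially the same approach as the paper: split into the two cases $X_\cF\cong X_\varnothing^0\ast X_\varnothing^0$ and $X_\cF\cong X_\varnothing^1$ via the join decomposition, then compute the spectrum of $\Delta$ using the relation $NN^T=qI_m+J_m$ for the point--line incidence matrix of $\P^2(\F_q)$. The one cosmetic improvement over the paper is that you make explicit the identity $\Delta=I-\tfrac{1}{q+1}A$ for these $(q+1)$-regular graphs, which lets you appeal directly to the well-known adjacency spectra of $K_{q+1,q+1}$ and of the projective-plane incidence graph; the paper instead works with $(q+1)\Delta$ and grinds out its minimal polynomial, but the underlying computation ($AA^t=qI+J$) is the same.
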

\begin{proof} If $\dim(X_\cF)=1$ then the length of $\cF$ is $\ell=n-2$. Let $(t_0,\dots,
t_{n-1})$ be defined by \eqref{eq-t}. Since $t_i\geq 1$ and $\sum_{i=0}^{n-1} (t_i-1)=2$,
either exactly two $t_i$, $t_j$, $i<j$, are equal to $2$ and all
others are $1$, or exactly one $t_i$ is equal to $3$ and all others
are $1$. In the first case $X_\cF\cong X_\varnothing^0\ast
X_\varnothing^0$, in the second case $X_\cF\cong X_\varnothing^1$.

In the first case $X_\cF$ is a $(q+1)$-regular bipartite graph with $2(q+1)$ vertices. 
It is easy to check that $(q+1)\Delta$ acts on $C^0(X_\cF)$ as the
matrix
$$
(q+1)I_{2(q+1)}-\begin{pmatrix} 0 & J_{q+1} \\ J_{q+1} & 0\end{pmatrix}. 
$$
The minimal polynomial of this matrix is $x(x-(q+1))(x-2(q+1))$, so the
eigenvalues of $\Delta$ are $0$, $1$, and $2$.

In the second case, $X_\cF$ is isomorphic to the graph 
whose vertices correspond to $1$ and $2$-dimensional
subspaces of a $3$-dimensional vector space $V$ over $\F_q$, two
vertices being adjacent if one of the corresponding subspaces is
contained in the other. With a slight abuse of terminology, we will
call $1$ and $2$ dimensional subspaces lines and planes,
respectively. The number of lines and planes in $V$ is $m=q^2+q+1$
each. Let $A=(a_{ij})$ be the $m\times m$ matrix whose rows are
enumerated by the lines in $V$ and columns by the planes, and
$a_{ij}=-1$ if the $i$th line lies in the $j$th plane, and is $0$
otherwise. We can choose a basis of $C^0(X_\cF)$ so that
$(q+1)\Delta$ acts as the matrix
$$
(q+1)I_{2m}+\begin{pmatrix} 0 & A \\ A^t & 0\end{pmatrix},
$$
where $A^t$ denotes the transpose of $A$. 
Let $M=\begin{pmatrix} 0 & A \\ A^t & 0\end{pmatrix}$. Since any two
distinct lines lie in a unique plane and any line lies in $(q+1)$
planes, $AA^t=qI_m+J_m$. By a similar argument, $A^tA=qI_m+J_m$.
Hence
$$
M^2= qI_{2m}+\begin{pmatrix} J_m & 0 \\ 0 & J_m\end{pmatrix}.
$$
This implies that $(M^2-qI_{2m})(M^2-(q+1)^2I_{2m})=0$. Since
$(q+1)\Delta - (q+1)I_{2m}=M$, we conclude that $(q+1)\Delta$
satisfies the polynomial equation
$$
x(x-(2q+2))(x^2-(2q+2)x+(q^2+q+1))=0.
$$
It is not hard to see that this is in fact the minimal polynomial of
$(q+1)\Delta$. Hence the eigenvalues of $\Delta$ are $0$, $2$, and
$1\pm\frac{\sqrt{q}}{q+1}$.
\end{proof}

Denote by $\mathrm{min.pol}^i_n(x)$ the minimal polynomial of $\Delta$ acting on
$C^i(X^n_\varnothing)$. The proof of Lemma \ref{lem-n2.2} shows that  
$$
\mathrm{min.pol}^0_1(x)=x(x-2)\left(x^2-2x+\frac{q^2+q+1}{q^2+2q+1}\right).
$$
Note that $$m^0(X^1_\varnothing)=1- \frac{\sqrt{q}}{q+1}$$
is always strictly larger than $1/2$ and tends to $1$ as 
$q\to \infty$. Moreover, the whole polynomial tends coefficientwise to the
polynomial $x(x-2)(x-1)^2$.

Now assume $n=2$. In this case it is considerably harder to compute
the minimal polynomials. With the help of a computer, we deduced that
\begin{align*}
\mathrm{min.pol}^0_2(x)=&x(x-2)\left(x-3\right)\left(x-\frac{2q^2+3q+2}{q^2+q+1}\right)\\
&\times\left(x^2-\frac{4q^2+3q+4}{q^2+q+1}x+\frac{4q^2+4}{q^2+q+1}\right).
\end{align*}
This implies 
$$
m^0(X^2_\varnothing)= \frac{1}{2(q^2+q+1)}\left(4q^2+3q+4-\sqrt{8q^3+9q^2+8q}\right)
$$
is at least $1.08$ and tends to $2$ from below as $q\to
\infty$. The whole polynomial tends coefficientwise to the
polynomial $x(x-3)(x-2)^4$ as $q\to \infty$. Next
\begin{align*}
\mathrm{min.pol}^1_2(x)=&x(x-1)(x-2)(x-3)\\
&\times\left(x^2-2x+\frac{q^2+1}{q^2+2q+1}\right)
\left(x^2-3x+\frac{2q^2+2q+2}{q^2+2q+1}\right)\\
&\times\left(x^2-4x+\frac{4q^2+6q+4}{q^2+2q+1}\right).
\end{align*}
In this case $$m^1(X^2_\varnothing)=1-\frac{\sqrt{2q}}{q+1}.$$ 
It is easy to see that $1/3\leq m^1(X^2_\varnothing)<1$.  Moreover, $m^1(X^2_\varnothing)$ is strictly larger
than $1/3$ for $q>2$ and tends to $1$ as $q\to \infty$; the whole
polynomial tends to $x(x-3)(x-2)^4(x-1)^4$.

\begin{conj}\label{conj} 
The previous examples, combined with some calculations for $n=3$ 
which we do not list, suggest a remarkable property of the eigenvalues of $\Delta$ 
acting on $C^i(X^n_\varnothing)$, $0\leq i\leq n-1$:
\begin{enumerate}
\item The number of distinct eigenvalues of $\Delta$ depends only on $i$, i.e., does not depend on $q$, even though the
eigenvalues themselves and the dimension of $C^i(X^n_\varnothing)$ depend on $q$. 
\item The positive eigenvalues of $\Delta$, which in general are neither rational nor integral, 
tend to the integers
$$
n-i,\ n-i+1,\ \dots,\ n+1
$$ as $q\to \infty$.
\end{enumerate}
\end{conj}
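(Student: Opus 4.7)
The plan is to exploit the action of $G := \GL_{n+2}(\F_q)$ on $X := X^n_\varnothing$, which is the spherical building of type $A_{n+1}$. Since $G$ acts transitively on $n$-simplices, the metric $w$ of \eqref{eq-themetric} is $G$-invariant, the inner product \eqref{eq-pairing} is $G$-invariant, and $d, \delta, \Delta$ are $G$-equivariant. In particular $\Delta$ preserves the isotypic decomposition of $C^i(X)$, so every eigenspace is a $G$-subrepresentation, and the problem of counting and computing eigenvalues splits over the isotypic blocks.

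The first step is to decompose $C^i(X)$ as a $G$-module. The permutation representation on ordered $(i+1)$-flags is a sum of parabolically induced representations $\mathrm{Ind}_{P_J}^G \mathbf{1}$, indexed by compositions $J$ of $n+2$ with $i+2$ parts; the alternating subspace $C^i(X)$ is cut out by a sign character. By Tits' deformation theorem, the endomorphism algebras $\mathrm{End}_G(\mathrm{Ind}_{P_J}^G \mathbf{1})$ are $q$-analogues of the corresponding double coset algebras for $S_{n+2}$, flat deformations of $\C[W_J \backslash S_{n+2}/W_J]$; the Iwahori-Hecke algebra $H_q(S_{n+2})$ is the basic case $J = (1,\ldots,1)$. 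A consequence is that the \textbf{list of $G$-irreducible constituents of $C^i(X)$, counted with multiplicity, is independent of $q$}, being given by a $q$-deformed Young rule. Now $\Delta \in \mathrm{End}_G(C^i(X))$, which is a product of matrix algebras whose sizes are these (constant in $q$) multiplicities, so the number of distinct eigenvalues of $\Delta$ is bounded by a $q$-independent combinatorial quantity; this accounts for part (1) of the conjecture, provided one also checks that this bound is saturated (not attained only at a Zariski-closed set of $q$, which can be ruled out by a continuity/density argument once an explicit Hecke-algebra formula for $\Delta$ is available).

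For part (2) I would write $\Delta = \delta d$ explicitly in the Hecke-algebra presentation. Concretely, $d$ and $\delta$ are built from the ``adjacency'' operators that move between parabolic types by a single simple reflection, so $\Delta$ becomes a specific $q$-dependent element of the appropriate Hecke-algebra block. Tits' deformation principle then gives an algebraic function $\la(q)$ for each eigenvalue, whose $q \to \infty$ limit equals the corresponding eigenvalue for the limiting element of $\C[S_{n+2}]$ acting on the analogous induced $S_{n+2}$-representation $\mathrm{Ind}_{W_J}^{S_{n+2}} \mathrm{sgn}$. The conjecture reduces to computing these symmetric-group eigenvalues, which I would do by diagonalizing the limit operator through Young symmetrizers or the Okounkov–Vershik branching approach; the numerology $n-i, n-i+1, \ldots, n+1$ is consistent with Specht modules indexed by hook shapes appearing with specific eigenvalues arising from the content of the removable boxes.

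The main obstacle is part (2): identifying the integer limits. Steps 1 and 2 are essentially mechanical applications of Tits deformation and Howe-type multiplicity analysis. The hard part is producing an honest closed form for $\Delta$ as a Hecke-algebra element, degenerating it cleanly to $\C[S_{n+2}]$, and carrying out the combinatorial diagonalization of the limit. A possibly cleaner alternative would be to model the limit geometrically: the $q \to \infty$ combinatorial shadow of $X^n_\varnothing$ should be a flag complex on the boolean lattice of subsets of an $(n+2)$-element set with appropriate $S_{n+2}$-invariant weights, and one would verify directly that its Laplacian on $i$-cochains has spectrum $\{0, n-i, n-i+1, \ldots, n+1\}$, matching Example \ref{Ex-P} at $i = n-1$ and the low-rank cases tabulated in $\S$\ref{ss4.1}.
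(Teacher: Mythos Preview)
The statement you are attempting to prove is labeled a \emph{Conjecture} in the paper, and the paper does not prove it. What the paper does, in \S\ref{ss4.3}, is establish a few partial results consistent with the conjecture: that $M^0(X_\cF)=N+1$ with multiplicity exactly $N$ (independent of $q$), and that $m^0(X_\cF)\leq N$, which combined with Theorem~\ref{thm-G} gives $m^0(X_\cF)\to N$ as $q\to\infty$. These are the extreme positive eigenvalues for $i=0$, matching the endpoints $n-i=n$ and $n+1$ of the predicted range; the paper says nothing about the intermediate eigenvalues or about $i>0$, and nothing about part~(1) beyond the multiplicity of the top eigenvalue.

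Your proposal is therefore not a comparison target but a research outline toward an open problem. As such it is reasonable: exploiting the $\GL_{n+2}(\F_q)$-equivariance and Tits deformation is the natural structural approach, and would likely yield part~(1) once the details are filled in. But be aware of the genuine gaps you yourself flag. For part~(1), knowing that $\dim\mathrm{End}_G(C^i(X))$ is independent of $q$ gives only an upper bound on the number of distinct eigenvalues; you must also show that no accidental eigenvalue coincidences occur between distinct isotypic blocks for any $q$, and that within a block of multiplicity $m_j>1$ the $m_j\times m_j$ matrix representing $\Delta$ has the same number of distinct eigenvalues for every $q$. Your ``continuity/density'' remark does not settle this, since $q$ ranges over prime powers, not a Zariski-dense set in any useful sense. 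For part~(2), the identification of the $q\to\infty$ limit of $\Delta$ with a specific element of $\C[S_{n+2}]$ and the subsequent diagonalization are the entire content of the problem; asserting that ``the numerology is consistent with hook Specht modules'' is suggestive but not a computation.
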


\subsection{Inductive step}\label{ss4.2} Since we proved Theorem \ref{thm-G} 
for $N=1$, we assume $N\geq 2$. Let $1\leq i\leq N-1$ be given. 
Assume for the moment that we proved the bound in Theorem \ref{thm-G} for $\Delta$ 
acting on $C^{i-1}(X_\cG)$, where $\cG$ is any flag with $\dim(\cG)= N-1$.  
Since for any $v\in \Ver(X_\cF)$ its link $\Lk(v)$ 
is isomorphic to $X_\cG$ for some $\cG\succ\cF$ with $\dim(X_\cG)=N-1$, 
we get
\begin{align*}
\la^{i-1}_{\min}(X_\cF) &\geq (N-1)-(i-1)-\eps'=N-i-\eps',
\end{align*}
where $\eps'=i\cdot\eps/(i+1)$. Then, by Theorem \ref{thmFIeigen}, we have  
\begin{equation}\label{eqNNN2}
m^i(X_\cF) \geq \frac{(i+1)\cdot \la^{i-1}_{\min}(X_\cF)-(N-i)}{i} \geq  N-i-\eps. 
\end{equation}

Therefore, to complete the proof of Theorem \ref{thm-G} it remains to show 
that 
\begin{equation}\label{eqNNN}
m^0(X_\cF)\geq N-\eps.
\end{equation} This will occupy the rest of this subsection.

\begin{rem}
Instead of induction, one can deduce the lower bound \eqref{eqNNN2} 
directly from \eqref{eqNNN} using Corollary \ref{corFIeigen}. Indeed, the link of any 
$(i-1)$-simplex in $X_\cF$ is isomorphic to some $X_\cG$ with $\dim(X_\cG)=N-i$, so 
assuming $m^0(X_\cG)\geq (N-i)-\eps'$, $\eps'=\eps/(i+1)$, Corollary \ref{corFIeigen} gives 
$$
m^i(X_\cF)\geq (i+1)(N-i-\eps')-i(N-i)= N-i-\eps
$$
On the other hand, 
the proof of Corollary \ref{corFIeigen} uses similar inductive argument as above. 
\end{rem}

We start by proving some preliminary lemmas.  For an integer $m\geq 1$ we put $(m)_q=\prod_{k=1}^m(q^k-1)$, and we put $(0)_q=1$. 
The number of $d$-dimensional subspaces in an $m$-dimensional linear space over $\F_q$ is equal to 
$$
\gc{m}{d}:=\frac{(m)_q}{(d)_q(m-d)_q}.
$$
With this notation it is easy to give a formula for the number of $n$-flags refining a given flag:
\begin{lem}\label{lem-3.1} 
Let $s$ be a simplex in $X_\cF$ corresponding to $\cG\succ \cF$. Let $(r_0,\dots, r_{j})$ 
be the integers defined for $\cG$ by \eqref{eq-t}. The number of $N$-simplices in $X_\cF$
containing $s$ is given by the formula
$$
w(s)=\prod_{k=0}^j\prod_{z=1}^{r_k}\gc{z}{1}=\prod_{k=0}^{j}(r_k)_q/(1)^{r_k}_q. 
$$
\end{lem}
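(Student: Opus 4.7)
The plan is to reduce this to a classical count of complete flags in a vector space over $\F_q$. By the definition of $X_\cF$, an $N$-simplex of $X_\cF$ is a maximal (i.e.\ complete) flag in $V$ refining $\cF$; those containing the simplex $s$ are exactly the complete flags in $V$ refining $\cG$. Hence $w(s)$ equals the number of complete flags in $V$ refining $\cG$.

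Writing $\cG: G_0 \subset G_1 \subset \cdots \subset G_{j-1}$ and adopting the conventions $G_{-1}=0$ and $G_j=V$, the integer $r_k$ in \eqref{eq-t} is precisely $\dim(G_k/G_{k-1})$. A complete flag in $V$ refining $\cG$ is the same data as, for each $0\leq k\leq j$, a complete flag in the quotient space $G_k/G_{k-1}$, and these choices are independent across $k$. Therefore
\[
w(s)=\prod_{k=0}^{j} N(r_k),
\]
where $N(r)$ denotes the number of complete flags in an $r$-dimensional $\F_q$-vector space.

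It remains to check that $N(r)=\prod_{z=1}^{r}\gc{z}{1}=(r)_q/(1)_q^{r}$, which is the one genuine computation. This is standard: to build a complete flag one chooses a line ($\gc{r}{1}$ possibilities), then a line in the $(r-1)$-dimensional quotient ($\gc{r-1}{1}$ possibilities), and so on down to a line in a $1$-dimensional quotient; the product is $\prod_{z=1}^{r}\gc{z}{1}$ after reindexing, and simplifies to $(r)_q/(1)_q^{r}$ since $\gc{z}{1}=(q^{z}-1)/(q-1)$. Substituting gives both forms of the asserted identity.

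There is no real obstacle here; the content of the lemma is combinatorial bookkeeping, and the proof is just unwinding definitions plus the well-known cardinality of the flag variety of $\mathrm{GL}_r(\F_q)$ modulo its diagonal torus. The lemma is used as input for the finer spectral estimates later in Section~\ref{sec3}.
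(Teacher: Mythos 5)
Your proof is correct, and the paper itself states this lemma without proof (the author deems it ``easy'' from the definition of the Gaussian binomial). Your argument — identifying the $N$-simplices of $X_\cF$ containing $s$ with the complete flags in $V$ refining $\cG$, factoring such a refinement into independent choices of a complete flag in each successive quotient $G_k/G_{k-1}$ (of dimension $r_k$), and then counting complete flags in an $r$-dimensional space by successively choosing a line in the quotient — is the standard and essentially the only natural argument, and it is carried out correctly, including the algebraic simplification $\prod_{z=1}^{r}\gc{z}{1}=(r)_q/(1)_q^r$. The only small presentational point worth making explicit is why $w(s)$, defined in \eqref{eq-themetric} as the number of non-oriented $N$-simplices containing $s$, is the same as the number of complete flags refining $\cG$: an $h$-simplex of $X_\cF$ is by definition a set of $(\ell+1)$-flags refined by a common $(\ell+h+1)$-flag, so with $h=N=n-1-\ell$ one gets an $n$-flag, which (with the conventions $0\subset F_0\subset\cdots\subset F_n\subset V$ in an $(n+2)$-dimensional $V$) is precisely a complete flag, and the simplex determines the flag and vice versa; you assert this but it is worth a sentence since it is the step that translates the simplicial statement into linear algebra.
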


Let $v\in \Ver(X_\cF)$ and let $\cG$ be the
corresponding $(\ell+1)$-flag. There is a unique subspace $G$ in the
sequence of $\cG$ which does not occur in $\cF$. Let
$$\Type(v):=\dim(G).$$ 
Denote the set of types of vertices of $X_\cF$ by $\fT$. 
It is easy to see that the vertices of a simplex in $X_\cF$ 
have distinct types. Moreover, $\# \fT=N+1$. 

\begin{lem}\label{eq-yet} Let $v\in \Ver(X_\cF)$. Assume $\alpha\in \fT$ is fixed and $\alpha\neq \Type(v)$.  Then 
$$
\sum_{\substack{x\in \Ver(X_\cF)\\ [v, x]\in \widehat{S}_{1}(X_\cF) \\ \Type(x)=\alpha}}w([v,x]) = w(v). 
$$
\end{lem}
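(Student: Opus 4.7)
The plan is to prove the identity by a bijective double count. Unwinding the definitions in Lemma \ref{lem-3.1}, the weight $w(\sigma)$ of a simplex $\sigma$ of $X_\cF$ corresponding to a flag $\cG \succ \cF$ simply counts the number of $N$-simplices of $X_\cF$ containing $\sigma$, which in linear-algebraic terms is the number of \emph{maximal} flags (i.e., complete chains with one subspace of each dimension $1,2,\ldots,n+1$) refining $\cG$.

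Let $v\in\Ver(X_\cF)$ correspond to the $(\ell+1)$-flag $\cG=\cF\cup\{G\}$, so that $\Type(v)=\dim G$, and $w(v)$ equals the number of maximal flags $\cM$ of $V$ refining $\cG$. For the left-hand side, I will set up a bijection between
\begin{itemize}
\item pairs $(x,\cM)$, where $x\in\Ver(X_\cF)$ satisfies $\Type(x)=\alpha$ and $[v,x]\in\widehat{S}_1(X_\cF)$, and $\cM$ is a maximal simplex of $X_\cF$ containing $[v,x]$, and
\item maximal flags $\cM$ of $V$ refining $\cG$.
\end{itemize}
In one direction, any such pair gives the maximal flag $\cM$, which clearly refines $\cG$. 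In the other, given $\cM$ refining $\cG$, note that $\cM$ is complete, so it contains a \emph{unique} subspace $H$ of dimension $\alpha$. Setting $x:=\cF\cup\{H\}$, I need to verify that $x$ is a well-defined vertex of $X_\cF$ with $\Type(x)=\alpha$, that $x\neq v$, and that $[v,x]\in\widehat{S}_1(X_\cF)$ with $\cM$ being one of the $N$-simplices containing $[v,x]$.

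All three checks are short: since $\alpha\in\fT$, no subspace of $\cF$ has dimension $\alpha$, so $H\notin\cF$ and $x$ is a legitimate vertex of type $\alpha$; the hypothesis $\alpha\neq\Type(v)=\dim G$ forces $H\neq G$, so $x\neq v$; and the flag $\cF\cup\{G,H\}$ (ordered by inclusion, which works because $G,H$ are both contained in the chain $\cM$) is an $(\ell+2)$-flag refining $\cF$, so $[v,x]$ is indeed an edge, and $\cM$ obviously refines this $(\ell+2)$-flag. The two maps are manifestly inverse to each other, so
\[
\sum_{x}w([v,x])=\#\{(x,\cM)\}=\#\{\cM\text{ refining }\cG\}=w(v).
\]
I don't anticipate any real obstacle; the only subtle point is that the hypothesis $\alpha\neq\Type(v)$ is exactly what guarantees that the $H$ produced in $\cM$ is distinct from $G$, so the construction does not collapse to $x=v$.
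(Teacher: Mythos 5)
Your proof is correct, but it is genuinely different from the paper's. The paper appeals to the explicit $q$-factorial formula for weights from Lemma \ref{lem-3.1}: writing the array $(t_0,\dots,t_{i+1})$ for the flag of $v$, it identifies the unique $t_a$ that gets split when passing to a refining edge of the given type, counts those edges as $\gc{t_a}{t_a'}$, and then verifies by direct algebraic cancellation with $q$-factorials that $\gc{t_a}{t_a'}\,w([v,x])/w(v)=1$. You instead argue by a bijective double count: since $w(\sigma)$ is by definition \eqref{eq-themetric} the number of $N$-simplices (equivalently, complete flags) containing $\sigma$, you pair each maximal flag $\cM$ refining $v$'s flag $\cG$ with the unique subspace $H\in\cM$ of dimension $\alpha$ (which exists and is unique because $\cM$ is a full chain, and is $\neq G$ precisely because $\alpha\neq\Type(v)$), getting the vertex $x=\cF\cup\{H\}$ together with the pair $([v,x],\cM)$. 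Your argument is more elementary and conceptual: it avoids Lemma \ref{lem-3.1} and $q$-binomial identities entirely and makes transparent \emph{why} the answer is exactly $w(v)$. The paper's computation, on the other hand, simultaneously verifies the intermediate count $\gc{t_a}{t_a'}$ of edges of a given type, which is occasionally useful elsewhere. All your verification steps (that $H\notin\cF$ since $\alpha\in\fT$, that the hypothesis $\alpha\neq\Type(v)$ prevents $x=v$, and that the subspaces of $\cF\cup\{G,H\}$ are totally ordered because they all lie in the chain $\cM$) are exactly right and complete.
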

\begin{proof}
Let $\cG$ be the flag of length $i:=\ell+1$ in $V$ corresponding
to $v$. Let $(t_0,\dots, t_{i+1})$ be the array \eqref{eq-t} of $\cG$. Let $[v, x]\in \widehat{S}_{1}(X_\cF)$ 
and $\cG'\succ \cG$ be the corresponding $(i+1)$-flag. There is a unique $t_a$
such that the array of $\cG'$ is $(t_0,\dots,t_a', t_a'',\cdots, t_{i+1})$
with $t_a'+t_a''=t_a$. Moreover, the type of $x$ uniquely determines  
$a$ and $t_a'$. The number of $[v, x]\in \widehat{S}_{1}(X_\cF)$ with $\Type(x)=\alpha$ is
equal to $\gc{t_a}{t_a'}$. Using Lemma \ref{lem-3.1}, we compute 
$$
\sum_{\substack{x\in \Ver(X_\cF)\\ [v, x]\in \widehat{S}_{1}(X_\cF) \\ \Type(x)=\alpha}}\frac{w([v,x])}{w(v)}
=\gc{t_a}{t_a'}\frac{(1)_q^{t_a}
(t_a')_q(t_a'')_q}{(t_a)_q(1)_q^{t_a'}(1)_q^{t_a''}}=1.
$$
\end{proof}

\begin{rem} Lemma \ref{eq-yet} is a refined version of Lemma \ref{lem-w}. Indeed, 
$$
\sum_{\substack{x\in \Ver(X_\cF)\\ [v, x]\in \widehat{S}_{1}(X_\cF) }}w([v,x]) =
\sum_{\substack{\alpha\in \fT\\ \alpha\neq \Type(v)}}
\sum_{\substack{x\in \Ver(X_\cF)\\ [v, x]\in \widehat{S}_{1}(X_\cF) \\ \Type(x)=\alpha}}w([v,x]) = 
\sum_{\substack{\alpha\in \fT\\ \alpha\neq \Type(v)}} w(v) = Nw(v).  
$$ 
\end{rem}

Let $f\in C^0(X_\cF)$ and let $R\in \R$ be a fixed constant. For each 
$\alpha\in \fT$ define the function $f_\alpha\in C^0(X_\cF)$ by 
$$
f_\alpha(v) = 
\begin{cases}
R\cdot f(v), & \text{if $\Type(v)= \alpha$};\\ 
f(v), & \text{if $\Type(v)\neq \alpha$}.
\end{cases}
$$
Also, for $i\geq 0$ define a linear transformation $\rho_\alpha: C^i(X_\cF)\to C^i(X_\cF)$ by
$$
\rho_\alpha=\sum_{\substack{v\in \Ver(X_\cF)\\ \Type(v)=\alpha}}\rho_v.
$$

\begin{lem} We have 
\begin{align}
\label{eq-ny3} 
\sum_{\alpha\in \fT}(1-\rho_\alpha)df&=(N-1)df, \\
\label{eq-ny} 
\sum_{\substack{v\in \Ver(X_\cF)\\ \Type(v)=\alpha}} (\Delta\rho_v df_\alpha,
\rho_vdf_\alpha) &=(\Delta \rho_\alpha df_\alpha, \rho_\alpha df_\alpha),  \\ 
\label{lem2.3}
(\rho_\alpha df_\alpha, df_\alpha) & =(df_\alpha,
df_\alpha)-((1-\rho_\alpha)df, df),\\ 
\label{lem2.2}
(\Delta\rho_\alpha df_\alpha, \rho_\alpha df_\alpha) &=((1-\rho_\alpha)df,df).
\end{align}
\end{lem}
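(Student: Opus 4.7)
The plan is to prove the four identities in turn; the common thread is that every simplex of $X_\cF$ has vertices of pairwise distinct types (noted just before Lemma \ref{eq-yet}), so each simplex contains at most one vertex of any prescribed type $\alpha\in\fT$.

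For \eqref{eq-ny3}, recall $\#\fT=N+1$ and $\sum_{\alpha\in\fT}\rho_\alpha=\sum_{v\in\Ver(X_\cF)}\rho_v$. Applying \eqref{eq-obv} to the $1$-cochain $df$ gives $\sum_v \rho_v df=2df$, so $\sum_\alpha(1-\rho_\alpha)df=((N+1)-2)df=(N-1)df$. For \eqref{eq-ny}, distinct vertices $v\neq v'$ with $\Type(v)=\Type(v')=\alpha$ cannot lie in any common simplex, hence $d\rho_v df_\alpha$ and $d\rho_{v'}df_\alpha$ (each supported on $2$-simplices containing the labeled vertex) have disjoint supports. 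Expanding $(d\rho_\alpha df_\alpha, d\rho_\alpha df_\alpha)$ as a double sum over pairs $v,v'$ of type $\alpha$ and dropping the vanishing cross terms yields the identity.

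For \eqref{lem2.3}, decompose $df_\alpha=\rho_\alpha df_\alpha+(1-\rho_\alpha)df_\alpha$ and pair with $df_\alpha$; this reduces the claim to $((1-\rho_\alpha)df_\alpha,df_\alpha)=((1-\rho_\alpha)df,df)$. Both sides are sums over $1$-simplices with no vertex of type $\alpha$, and on such an edge $f_\alpha$ agrees with $f$, so the two expressions coincide term by term.

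Identity \eqref{lem2.2} is the main obstacle. A $2$-simplex $\sigma$ on which $d\rho_\alpha df_\alpha$ does not vanish must contain exactly one type-$\alpha$ vertex $u$; writing $\sigma=[u,v_1,v_2]$ and noting that $\rho_\alpha df_\alpha$ vanishes on the opposite edge $e=[v_1,v_2]$, a direct calculation gives
\begin{align*}
d\rho_\alpha df_\alpha([u,v_1,v_2]) &= -df_\alpha([u,v_2])+df_\alpha([u,v_1])\\
&= -(f(v_2)-Rf(u))+(f(v_1)-Rf(u))\\
&= f(v_1)-f(v_2),
\end{align*}
so the auxiliary parameter $R$ cancels and $(d\rho_\alpha df_\alpha(\sigma))^2=df(e)^2$. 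Regrouping $(\Delta\rho_\alpha df_\alpha,\rho_\alpha df_\alpha)=\sum_\sigma w(\sigma)(d\rho_\alpha df_\alpha(\sigma))^2$ by the edge $e$ opposite $u$ and applying the straightforward extension of Lemma \ref{eq-yet} to edges,
$$
\sum_{\substack{u\in\Ver(X_\cF)\\ [u,e]\in\widehat{S}_2(X_\cF)\\ \Type(u)=\alpha}} w([u,e])=w(e),
$$
produces $\sum_{e:\ \text{no type-}\alpha\text{ vertex}} w(e)df(e)^2$, which is precisely $((1-\rho_\alpha)df,df)$. The generalized Lemma \ref{eq-yet} follows from the identical flag-counting argument, now applied to the array \eqref{eq-t} of the flag corresponding to $e$ rather than to a vertex; the essential phenomenon is the cancellation of $R$, which is exactly why $f_\alpha$ rather than $f$ itself is the right object to feed into the computation.
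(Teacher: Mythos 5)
Your proof is correct and takes essentially the same route as the paper's for all four identities, including the key observation that every simplex of $X_\cF$ has vertices of pairwise distinct types, so each simplex has at most one vertex of any fixed type. You are right that the concluding step in \eqref{lem2.2} requires Lemma~\ref{eq-yet} to be extended from vertices to arbitrary simplices (here, edges); the paper glosses over this by citing Lemma~\ref{eq-yet} directly, but the generalization is routine since the proof of Lemma~\ref{eq-yet} relies only on the flag-counting formula of Lemma~\ref{lem-3.1}, which is valid for simplices of any dimension.
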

\begin{proof} Equation \eqref{eq-ny3} follows from a straightforward calculation:  
$$
\sum_{\alpha\in \fT}(1-\rho_\alpha)df=(N+1)df-\sum_{v\in
\Ver(X_\cF)}\rho_v df \nonumber =(N+1)df-2df=(N-1)df. 
$$

To prove \eqref{eq-ny}, expand its right hand-side as 
$$
(d \rho_\alpha df_\alpha, d\rho_\alpha df_\alpha)=\sum_{\substack{v, v'\in \Ver(X_\cF)\\ \Type(v)=\Type(v)=\alpha}} 
(d\rho_v df_\alpha, d\rho_{v'} df_\alpha). 
$$
Now let $s=[x,y,z]\in S_2(X_\cF)$. Since the vertices of the same simplex have distinct types, only one of $x,y,z$ can be of type $\alpha$. 
Therefore, if $v\neq v'$ but $\Type(v)=\Type(v)=\alpha$, then $d\rho_v df_\alpha(s)\cdot d\rho_{v'} df_\alpha(s)=0$. 
This implies that in the above sum only the terms with $v=v'$ are possibly non-zero, so  
$$
\sum_{\substack{v, v'\in \Ver(X_\cF)\\ \Type(v)=\Type(v)=\alpha}}  (d\rho_v df_\alpha, d\rho_{v'} df_\alpha) = 
\sum_{\substack{v \in \Ver(X_\cF)\\ \Type(v)=\alpha}}  (d\rho_v df_\alpha, d\rho_v df_\alpha). 
$$

To prove \eqref{lem2.3}, note that if $s\in S_1(X_\cF)$ contains a vertex of type
$\alpha$, then $(1-\rho_\alpha)g(s)=0$ for any $g\in C^1(X)$. 
On the other hand, if $s$ does not contain a vertex of type $\alpha$, then 
$(1-\rho_\alpha)df_\alpha(s)= df_\alpha(s)=df(s)=(1-\rho_\alpha)df(s)$. Hence
$$((1-\rho_\alpha)df_\alpha, df_\alpha)=((1-\rho_\alpha)df, df).$$ Now
$$
((1-\rho_\alpha)df, df)=((1-\rho_\alpha)df_\alpha,
df_\alpha)=(df_\alpha, df_\alpha)-(\rho_\alpha df_\alpha,
df_\alpha).
$$

Finally, to prove \eqref{lem2.2}, let $s=[x,y,z]\in S_2(X_\cF)$. If none of the vertices
of $s$ has type $\alpha$ then $d\rho_\alpha d f_\alpha(s)=0$. If $s$
has a vertex of type $\alpha$, then such a vertex is unique. Without
loss of generality, assume $\Type(x)=\alpha$. Then
$$
d\rho_\alpha d f_\alpha([x,y,z])=f(y)-f(z)=-df([y,z]).
$$
Hence
$$
(d\rho_\alpha d f_\alpha, d\rho_\alpha d f_\alpha)=\sum_{\substack{v\in \Ver(X_\cF)\\
\Type(v)=\alpha}}\sum_{s\in \widehat{S}_1(\Lk(v))}w([v,s])df(s)^2
$$
$$
=\sum_{s\in \widehat{S}_1(X_\cF)} (1-\rho_\alpha)df(s)\cdot df(s)
\sum_{\substack{v\in \Ver(\Lk(s))\\
\Type(v)=\alpha}}w([v,s])= ((1-\rho_\alpha)df, df), 
$$
where in the last equality we used Lemma \ref{eq-yet}.
\end{proof}

\begin{lem}\label{lemd31}
Let $f\in C^0(X_\cF)$ and suppose $\Delta f=c\cdot f$. Then
$$
\sum_{\alpha\in \fT}(\Delta f_\alpha,
f_\alpha)=\left[(N-c)(R-1)^2+c(R^2+N)\right]\cdot (f,f).
$$
\end{lem}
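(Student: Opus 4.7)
The plan is to exploit the decomposition $f_\alpha = f + (R-1)\rho_\alpha f$, which simply records the fact that $f_\alpha$ differs from $f$ only on vertices of type $\alpha$, where it is rescaled by the factor $R$. Setting $g_\alpha := \rho_\alpha f$, bilinearity gives
\[
(\Delta f_\alpha, f_\alpha) = (\Delta f, f) + (R-1)(\Delta f, g_\alpha) + (R-1)(\Delta g_\alpha, f) + (R-1)^2 (\Delta g_\alpha, g_\alpha).
\]
Since $\Delta$ is self-adjoint and $\Delta f = c f$, the two middle terms combine to $2c(R-1)(f, g_\alpha)$. So it remains to evaluate the sum over $\alpha \in \fT$ of each resulting piece.

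Two of these sums are immediate. The first term sums to $(N+1)c(f,f)$, since $\#\fT = N+1$. For the middle term, every vertex has a unique type, so $\sum_\alpha g_\alpha = \sum_\alpha \rho_\alpha f = f$, which yields $\sum_\alpha (f, g_\alpha) = (f,f)$ and contributes $2c(R-1)(f,f)$.

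The main computation is therefore $\sum_\alpha (\Delta g_\alpha, g_\alpha) = \sum_\alpha (d\rho_\alpha f, d\rho_\alpha f)$. This is where I need to look at the geometry carefully. For an oriented edge $s = [v,w] \in S_1(X_\cF)$, the two endpoints necessarily have distinct types, say $\beta$ and $\gamma$. Inspecting $d\rho_\alpha f(s) = \rho_\alpha f(w) - \rho_\alpha f(v)$ by cases, I find $d\rho_\alpha f(s) = 0$ unless $\alpha \in \{\beta, \gamma\}$, and in those two cases the value is $\pm f(u)$, where $u$ is the unique endpoint of type $\alpha$. Thus $\sum_{\alpha \in \fT} d\rho_\alpha f(s)^2 = f(v)^2 + f(w)^2$. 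Swapping the order of summation,
\[
\sum_{\alpha \in \fT} (d\rho_\alpha f, d\rho_\alpha f) = \sum_{v \in \Ver(X_\cF)} f(v)^2 \sum_{\substack{x \in \Ver(X_\cF)\\ [v,x] \in \widehat{S}_1(X_\cF)}} w([v,x]).
\]
The inner sum equals $N\cdot w(v)$ by the remark following Lemma \ref{eq-yet}, so the double sum becomes $N(f,f)$.

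Putting these three contributions together gives
\[
\sum_{\alpha \in \fT}(\Delta f_\alpha, f_\alpha) = \bigl[(N+1)c + 2c(R-1) + N(R-1)^2\bigr](f,f),
\]
and a quick algebraic rearrangement — expanding $N(R-1)^2 = (N-c)(R-1)^2 + c(R-1)^2$ and combining the $c$-terms via $c(R-1)^2 + 2c(R-1) + (N+1)c = c(R^2 + N)$ — yields the asserted form. The only step that requires genuine combinatorial input is the evaluation of the edge sum, and that reduces cleanly to the type-refined weight identity of Lemma \ref{eq-yet}; once that is in hand, everything else is bookkeeping.
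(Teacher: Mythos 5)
Your proof is correct and follows essentially the same path as the paper: the paper applies the identity $(\Delta g, g) = N(g,g)$ (valid for $g$ supported on a single type) to $g = f_\alpha - f$, which is exactly your $(R-1)g_\alpha$, and then expands by bilinearity and sums over types. Your version simply reorganizes the bilinear expansion and evaluates $\sum_\alpha (\Delta g_\alpha, g_\alpha)$ by swapping the order of summation, but the underlying ingredients — Lemma \ref{lem-w}, the fact that no edge joins two vertices of the same type, and self-adjointness of $\Delta$ — are identical.
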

\begin{proof} Fix some type $\alpha$ and
let $g\in C^0(X_\cF)$ be a function such that $g(v)=0$ if
$\Type(v)\neq \alpha$. Then $(\Delta g, g)=N\cdot (g,g)$. Indeed,
\begin{align*}
(\Delta g, g) &=(dg,dg)=\sum_{[x,v]\in \widehat{S}_1(X_\cF)}w([x,v])(g(v)-g(x))^2 \\
& =\sum_{\substack{v\in \Ver(X_\cF)\\ \Type(v)=\alpha}}g(v)^2\sum_{\substack{x\in \Ver(X_\cF)\\ [x,v]\in \widehat{S}_1(X_\cF)}}w([x,v]) \\
&\overset{\mathrm{Lem.} \ref{lem-w}}{=}N\sum_{\substack{v\in \Ver(X_\cF)\\ \Type(v)=\alpha}}w(v)\cdot g(v)^2=N\cdot
(g,g).
\end{align*}
If we apply this to $g=f_\alpha-f$, then we get
\begin{equation}\label{eq-d31}
(\Delta f_\alpha, f_\alpha)=N\cdot (f_\alpha,
f_\alpha)-2(N-c)(f_\alpha, f)+(N-c)(f,f).
\end{equation}
Since the cardinality of $\fT$ is $(N+1)$,
$$
\sum_{\alpha\in \fT}f_\alpha= (N+R)\cdot f\quad \text{and} \quad
\sum_{\alpha\in \fT}(f_\alpha, f_\alpha)=(N+R^2)\cdot (f,f).
$$
Summing \eqref{eq-d31} over all types and using the previous two
equalities, we get the claim.
\end{proof}

\begin{prop}\label{prop4.15-15} For any $\eps>0$ there is a constant $q(\eps, n)$ depending only on
$\eps$ and $n$, such that if $q>q(\eps, n)$ then $m^0(X_\cF)\geq
N-\eps$.
\end{prop}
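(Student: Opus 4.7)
The plan is to proceed by induction on $N$, with the base case $N=1$ provided by the explicit computation in Lemma \ref{lem-n2.2}. For the inductive step, let $f \in C^0(X_\cF)$ be a $\Delta$-eigenfunction with positive eigenvalue $c$; I wish to show $c \geq N - \eps$. The link of every vertex $v \in \Ver(X_\cF)$ is isomorphic to some $X_\cG$ with $\dim(X_\cG) = N-1$, and by Lemma \ref{lemn2.1} any such link is connected (so $\tilde H^0(\Lk(v)) = 0$); hence the inductive hypothesis gives $m^0(\Lk(v)) \geq N - 1 - \eps'$ uniformly in $v$, where $\eps'$ will be chosen later in terms of $\eps$ and $n$.

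The centerpiece is a judicious choice of the free parameter $R$ defining $f_\alpha$. I will verify that setting
\[
R = 1 - \frac{c}{N}
\]
forces the following orthogonality: for every $\alpha \in \fT$ and every $v$ of type $\alpha$, the cochain $\tau_v df_\alpha \in C^0(\Lk(v))$ is orthogonal to the constant function $\mathbf{1}$ with respect to $(\cdot,\cdot)_v$. This is a one-line calculation using $\tau_v df_\alpha(w) = f(w) - R f(v)$ (valid because $\Type(w) \neq \alpha$ for every $w \in \Ver(\Lk(v))$) together with the identity $\sum_w w([v,w]) f(w) = (N-c) w(v) f(v)$ extracted from $\Delta f = cf$ by unpacking the formula for $\delta df(v)$.

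With the orthogonality in hand, the spectral decomposition of $\Delta_v$ on $C^0(\Lk(v))$, in the manner of the $i=1$ argument inside the proof of Lemma \ref{lemDec5}, furnishes the local bound $(\Delta_v \tau_v df_\alpha, \tau_v df_\alpha)_v \geq (N-1-\eps')(\tau_v df_\alpha, \tau_v df_\alpha)_v$ for each $v$ of type $\alpha$. Summing this over $v$ of type $\alpha$ and converting both sides by the already-established identities \eqref{eq-ny}, \eqref{lem2.2}, Lemma \ref{prop7.12} and \eqref{eq-(rho)} produces
\[
((1-\rho_\alpha) df, df) \;\geq\; (N - 1 - \eps')(\rho_\alpha df_\alpha, df_\alpha).
\]
Summing over $\alpha \in \fT$ and applying \eqref{eq-ny3} collapses the left-hand side to $(N-1)(df, df) = (N-1) c (f,f)$.

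To finish, I compute $\sum_\alpha (\rho_\alpha df_\alpha, df_\alpha)$ in closed form by combining \eqref{lem2.3} with Lemma \ref{lemd31}, which expresses the sum as $\bigl[(N-c)(R-1)^2 + cR^2 + c\bigr](f,f)$. Substituting $R = 1 - c/N$ causes cancellations that reduce this to $c(2N - c)/N \cdot (f,f)$. Plugging back yields $N(N-1) \geq (N-1-\eps')(2N - c)$, equivalently $c \geq 2N - N(N-1)/(N-1-\eps')$; since the right-hand side tends to $N$ as $\eps' \to 0$, taking $q$ large enough (so that the inductive $\eps'$ is small enough in terms of $\eps$ and $n$) delivers $c \geq N - \eps$. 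The delicate point is the interplay between the orthogonality requirement and the cancellation: the single choice $R = 1 - c/N$ simultaneously makes the local spectral bound lossless by killing the constant component of $\tau_v df_\alpha$, and triggers the algebraic collapse of $\sum_\alpha (\rho_\alpha df_\alpha, df_\alpha)$ into an expression linear in $c$, which is precisely what is needed for the final comparison with $(N-1)c$ to yield a nontrivial lower bound on $c$.
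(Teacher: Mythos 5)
Your proposal reproduces the paper's argument essentially step for step: same auxiliary function $f_\alpha$, same choice $R=(N-c)/N$, same orthogonality of $\tau_v df_\alpha$ to $\mathbf{1}$, same local spectral bound fed through \eqref{eq-ny}, \eqref{lem2.2}, \eqref{lem2.3} and Lemma \ref{lemd31}. The only divergence is the endgame: the paper splits off the case $c<N$ and discards the positive term $(N-c)(R-1)^2$ before concluding, obtaining $\eps\geq R^2(N-1-\eps)$, whereas you substitute $R=(N-c)/N$ exactly and collapse to the linear inequality $N(N-1)\geq (N-1-\eps')(2N-c)$, which avoids the case split and even gives a quantitatively sharper bound $N-c=O(\eps')$ rather than $O(\sqrt{\eps})$; both deliver the proposition.
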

\begin{proof} Since Lemma \ref{lem-n2.2} implies this claim for $N=1$, we can assume 
from now on that $N\geq 2$.

Let $f\in C^0(X_\cF)$ and suppose $\Delta
f=c\cdot f$. If $\Type(v)=\alpha$, then
\begin{align}
\Delta f_\alpha(v) = \sum_{\substack{x\in \Ver(X_\cF)\\ [x,v]\in S_1(X_\cF)}}\frac{w([x,v])}{w(v)}(Rf(v)-f(x))
=NRf(v)-C,
\end{align}
where $C$ does not depend on $R$ since $\Type(x)\neq \Type(v)$ if $[x,v]\in S_1(X_\cF)$. 
If we take $R=1$, then $f_\alpha=f$, so $\Delta f_\alpha(v)=c\cdot f(v)$. 
We conclude that $C=(N-c)f(v)$, and 
$\Delta f_\alpha(v)=(NR-(N-c))f(v)$. 
From now on we assume that $R=(N-c)/N$. With this choice of $R$ our calculation implies 
\begin{equation}\label{eq-deltaalpha}
\Delta f_\alpha(v)=0\quad \text{if } \Type(v)=\alpha.  
\end{equation}

Let $v\in \Ver(X_\cF)$ be a vertex of type $\alpha$. By Lemma \ref{prop7.14}, 
$$
(\Delta \rho_v df_\alpha, \rho_vdf_\alpha)= (\Delta_v\tau_v df_\alpha, \tau_v df_\alpha)_v. 
$$
Since 
$$
(\mathbf{1}, \tau_v df_\alpha)_v \overset{\eqref{eq-1tuaf}}{=} -w(v)\delta df_\alpha(v)= -w(v)\Delta f_\alpha(v) \overset{\eqref{eq-deltaalpha}}{=} 0, 
$$
we can use the argument in the proof of Lemma \ref{lemDec5} to conclude  
$$
(\Delta_v\tau_v df_\alpha, \tau_v df_\alpha)_v\geq m^{0}(\Lk(v))(\tau_v df_\alpha, \tau_v df_\alpha)_v
\geq \la^0_{\min}(X_\cF) (\tau_v df_\alpha, \tau_v df_\alpha)_v. 
$$
(Note that $\Lk(v)$ is connected since $\Lk(v)\cong X_\cG$ for some $\cG$ with $\dim(X_\cG)=N-1\geq 1$.)
Hence 
\begin{align*}
(\Delta \rho_v df_\alpha, \rho_vdf_\alpha)\geq \la^0_{\min}(X_\cF) (\tau_v df_\alpha, \tau_v df_\alpha)_v 
 &\overset{\mathrm{Lem}. \ref{prop7.12}}{=} \la^0_{\min}(X_\cF) (\rho_v df_\alpha, \rho_v df_\alpha) \\ 
& \overset{\eqref{eq-(rho)}}{=}\la^0_{\min}(X_\cF) (\rho_v df_\alpha, df_\alpha). 
\end{align*}
Summing these inequalities over all vertices of type $\alpha$ and using \eqref{eq-ny}, we get 
$$
(\Delta \rho_\alpha df_\alpha, \rho_\alpha df_\alpha)\geq
\la^0_{\min}(X_\cF)\cdot (\rho_\alpha df_\alpha, df_\alpha).
$$
Using \eqref{lem2.3}  and \eqref{lem2.2}, we can rewrite this inequality as 
$$
(1+\la^0_{\min}(X_\cF))((1-\rho_\alpha)df,df)\geq
\la^0_{\min}(X_\cF)\cdot (df_\alpha, df_\alpha)
$$
Summing these inequalities over all types and using \eqref{eq-ny3}
and Lemma \ref{lemd31}, we get
\begin{equation}\label{eq2.1}
(1+\la^0_{\min}(X_\cF))(N-1)c\geq \la^0_{\min}(X_\cF)\cdot
\left[(N-c)(R-1)^2+c(R^2+N)\right].
\end{equation}

Suppose $c=m^0(X_\cF)$. If $c\geq N$, then we are done. On the other hand, if $c<N$, then 
$(N-c)(R-1)^2$ is positive, so \eqref{eq2.1} implies 
$$
(1+\la^0_{\min}(X_\cF))(N-1)c \geq \la^0_{\min}(X_\cF)c (R^2+N).
$$
Dividing both sides by $c$ (recall that $c>0$), we get 
$$
N-1\geq (1+R^2)\la^0_{\min}(X_\cF). 
$$
By induction on $N$, for any $\eps>0$ there is a constant $q(\eps,
n)$ such that $\la^0_{\min}(X_\cF)\geq N-1-\eps$ if $q\geq q(\eps,
n)$. Thus 
$$
\eps\geq R^2(N-1-\eps). 
$$
We see that $R^2\to 0$ as $\eps\to 0$. Since $R=(N-c)/N$, this forces $c\to N$. 
\end{proof}

\subsection{Auxiliary results about eigenvalues}\label{ss4.3}
In this subsection we prove that $M^i(X_\cF)\leq N+1$ and $m^0(X_\cF)\leq N$. This 
implies that if we allow $q$ to vary, then the lower bound $m^0(X_\cF)\geq N-\eps$ in Proposition \ref{prop4.15-15} 
is optimal; in other terms, $m^0(X_\cF)\to N$ as $q\to \infty$, which is consistent with Conjecture \ref{conj}. 
We also show that $M^0(X_\cF)=N+1$ and its multiplicity is $N$, so does not depend on $q$.  

\begin{prop}\label{prop_ny} For all $0\leq i\leq N-1$ we have 
$M^i(X_\cF)\leq N+1$.
\end{prop}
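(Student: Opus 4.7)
The plan is to proceed by induction on $N = \dim(X_\cF)$, handling the case $i = 0$ by a direct sum-of-squares identity and reducing the case $i \geq 1$ to $i-1$ on links via Theorem \ref{thmFIeigen}.

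First I would dispose of $i = 0$ (this works for every $N \geq 1$ directly and powers the induction). The key identity I would aim for is
\[
(N+1)(f,f) - (\Delta f, f) \;=\; \sum_{t \in \widehat{S}_N(X_\cF)} \Bigl(\sum_{u \in t} f(u)\Bigr)^2,
\]
whose right-hand side is manifestly nonnegative. To see it, I would use the remark after Lemma \ref{eq-yet}, which gives $\sum_{x : [v,x] \in \widehat{S}_1(X_\cF)} w([v,x]) = N\cdot w(v)$, to compute
\[
(\Delta f, f) \;=\; (df,df) \;=\; N(f,f) \;-\; 2\!\!\!\!\sum_{\{v,x\} \in \widehat{S}_1(X_\cF)}\!\!\!\! w([v,x])\,f(v)\,f(x),
\]
so that $(N+1)(f,f) - (\Delta f,f) = \sum_v w(v) f(v)^2 + 2 \sum_{\{v,x\}} w([v,x]) f(v) f(x)$. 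On the other hand, expanding $\bigl(\sum_{u \in t} f(u)\bigr)^2$ and summing over all $N$-simplices $t$ produces exactly the same expression, because by \eqref{eq-themetric} $w(v)$ counts the $N$-simplices through $v$ and $w([v,x])$ counts the $N$-simplices containing the edge $\{v,x\}$ (using $w(t) = 1$ for top-dimensional $t$). This gives $M^0(X_\cF) \leq N+1$.

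Next, for $i \geq 1$ I would induct on $N$. For any $v \in \Ver(X_\cF)$, $\Lk(v) \cong X_\cG$ for some $\cG \succ \cF$ with $\dim X_\cG = N-1$, so by the inductive hypothesis $M^{i-1}(\Lk(v)) \leq N$ and hence $\lambda^{i-1}_{\max}(X_\cF) \leq N$. The first inequality of Theorem \ref{thmFIeigen} then yields
\[
i \cdot M^i(X_\cF) \;\leq\; (i+1)\cdot N - (N-i) \;=\; i(N+1),
\]
from which $M^i(X_\cF) \leq N+1$ follows.

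The only delicate step is the base case $i = 0$, where Theorem \ref{thmFIeigen} is unavailable; the main point is that the combinatorial weights \eqref{eq-themetric} are calibrated precisely so that the quadratic form $(N+1)(f,f) - (\Delta f,f)$ admits the above sum-of-squares decomposition indexed by top-dimensional simplices. Once this is in hand, the inductive step for $i \geq 1$ is an immediate application of Theorem \ref{thmFIeigen}.
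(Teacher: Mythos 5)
Your proof is correct, and for $i = 0$ it takes a genuinely different and more elementary route than the paper. The paper handles the case $i=0$ by running the $f_\alpha$ perturbation machinery (the analogue of Proposition \ref{prop4.15-15}): it localizes $(\Delta \rho_\alpha df_\alpha, \rho_\alpha df_\alpha)$ via $\tau_v$, bounds it above by $\la^0_{\max}(X_\cF)(\rho_\alpha df_\alpha, df_\alpha)$, invokes the inductive hypothesis $\la^0_{\max}(X_\cF)\leq N$, sums over types, and then uses Lemma \ref{lemd31} with $R=(N-c)/N$ to force $c\leq N+1$. That argument is inductive on $N$ even in the $i=0$ case, and it depends on the type structure of $X_\cF$ for the functions $f_\alpha$ and operators $\rho_\alpha$ to make sense. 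Your sum-of-squares identity
\[
(N+1)(f,f)-(\Delta f,f)=\sum_{t\in\widehat{S}_N(X_\cF)}\Bigl(\sum_{u\in t}f(u)\Bigr)^2
\]
is checked correctly (both sides expand to $\sum_v w(v)f(v)^2 + 2\sum_{\{v,x\}} w([v,x])f(v)f(x)$, using the definition \eqref{eq-themetric} of $w$ and Lemma \ref{lem-w}), it dispatches the base case in one stroke without induction, and it requires nothing more about $X_\cF$ than property $(\star)$ and the weight \eqref{eq-themetric}---so it actually proves $M^0(X)\leq\dim X + 1$ for every finite complex of this type, which is a cleaner and slightly more general statement. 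For $i\geq 1$ your use of Theorem \ref{thmFIeigen} on links is identical to the paper's. The only thing I would tighten is the parenthetical remark that $w(t)=1$ for top-dimensional $t$: true, but the identity never invokes $w(t)$, so the aside is a distraction rather than a needed step.
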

\begin{proof}  The proof is again by induction on $N$. If $N=1$, then the calculations in the proof 
of Lemma \ref{lem-n2.2} show that $M^0(X_\cF)=2$. 
Now assume $N\geq 2$ and $i\geq 1$. Assume we proved that $M^{i-1}(X_\cG)\leq N$ 
for any $\cG$ with $\dim(X_\cG)=N-1$. Then $\la_{\max}^{i-1}(X_\cF)\leq N$, so Theorem \ref{thmFIeigen} 
implies $M^i(X_\cF)\leq N+1$. It remains to prove that $M^0(X_\cF)\leq N+1$. 

Let $f\in C^0(X_\cF)$.  By an argument very similar to the proof of Proposition \ref{prop4.15-15} we get 
\begin{align*}
(\Delta \rho_v df_\alpha, \rho_vdf_\alpha)  = (\Delta_v\tau_v df_\alpha, \tau_v df_\alpha)_v
&\leq \la^0_{\max}(X_\cF)(\tau_v df_\alpha, \tau_v df_\alpha)_v \\ 
& = \la^0_{\max}(X_\cF)\cdot (\rho_v df_\alpha, df_\alpha),
\end{align*}
which leads to 
$$
(\Delta \rho_\alpha df_\alpha, \rho_\alpha df_\alpha)\leq \la^0_{\max}(X_\cF)\cdot (\rho_\alpha df_\alpha, df_\alpha). 
$$
By induction, $\la^0_{\max}(X_\cF)\leq N$, so using \eqref{lem2.3} and \eqref{lem2.2}, we can rewrite the previous inequality as 
$$
(1+N)\cdot ((1-\rho_\alpha)df,df)\leq N\cdot (df_\alpha, df_\alpha).
$$
Assume $\Delta f=c\cdot f$ is an eigenfunction. Summing the above inequalities over all types and using \eqref{eq-ny3}
and Lemma \ref{lemd31}, we get
$$
(N+1)(N-1)c\leq N\cdot
\left[(N-c)(R-1)^2+c(R^2+N)\right].
$$
If we put $R=(N-c)/N$, then this inequality forces $c\leq
N+1$. In particular, $M^0(X_\cF)\leq N+1$.
\end{proof}

Let $Y$ be an $N$-simplex. Since $Y$ has a unique simplex of maximal dimension, the weights \eqref{eq-themetric} of  
the simplices of $Y$ are all equal to $1$. Then, relative to the inner-product \eqref{eq-pairing}, we have the orthogonal direct sum decomposition 
(cf. Lemma \ref{lem1.7})
$$
C^0(Y)=\R\mathbf{1}\oplus \delta C^1(Y),
$$
where $\delta C^1(Y)$ can be explicitly described as the space of functions satisfying $$\sum_{v\in \Ver(Y)}g(v)=0.$$  
It is easy to check that $\Delta g = 0$ if and only if $g\in \R\mathbf{1}$, and $\Delta g= (N+1)g$ if and only if $g\in \delta C^1(Y)$. 
Hence $0$ and $N+1$ are the only eigenvalues of $\Delta$ acting on $C^0(Y)$, and their multiplicities are $1$ and $N$, 
respectively. 

\begin{defn} We say that $f\in C^0(X_\cF)$ is \textit{type-constant} if $f(v)=f(v')$ for all $v, v'\in \Ver(X_\cF)$ of the same type. 
We denote the space of type-constant functions by $\sC$. 
\end{defn}
Label the vertices of $Y$ by the elements of $\fT$. 
Given a function $f\in \sC$, define $\tilde{f}\in C^0(Y)$ by $\tilde{f}(x)=c_{\alpha}(f)$, $x\in \Ver(Y)$, where 
$\alpha=\Type(x)$ and $c_\alpha(f)$ is the value of $f$ on vertices of type $\alpha\in \fT$. 
It is clear that $\sC\to C^0(Y)$, $f\mapsto \tilde{f}$, is an isomorphism of vector spaces which restricts 
to an isomorphism $\sC_0 \xrightarrow{\sim} \delta C^1(Y)$, where $\sC_0\subset \sC$ is the subspace of functions $f\in \sC$ satisfying 
$$
\sum_{\alpha\in \fT}c_\alpha(f)=0. 
$$

\begin{lem}\label{lemNNC}
If $f\in \sC$, then $\Delta f\in \sC$. Moreover,  $\widetilde{\Delta f} = \Delta \tilde{f}$. 
This implies that for $f\in \sC_0$ we have $\Delta f= (N+1)f$. 
\end{lem}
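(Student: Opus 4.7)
The plan is to unwind the definition of $\Delta f(v)$ and show directly that it depends only on $\Type(v)$, with an explicit formula matching the Laplacian on the $N$-simplex $Y$.

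First I would compute $\Delta f(v)$ for a vertex $v$ of type $\beta \in \fT$ using the definitions of $d$ and $\delta$. Namely,
$$
\Delta f(v) = \delta df(v) = \sum_{\substack{x\in \Ver(X_\cF)\\ [x,v]\in S_1(X_\cF)}}\frac{w([x,v])}{w(v)}\bigl(f(v)-f(x)\bigr).
$$
Since $f \in \sC$, I would group the sum by $\alpha = \Type(x)$, writing $f(v) = c_\beta(f)$ and $f(x) = c_\alpha(f)$, so that
$$
\Delta f(v) = \sum_{\substack{\alpha \in \fT \\ \alpha \neq \beta}} \bigl(c_\beta(f)-c_\alpha(f)\bigr) \sum_{\substack{x\in \Ver(X_\cF)\\ [x,v]\in S_1(X_\cF) \\ \Type(x)=\alpha}}\frac{w([x,v])}{w(v)}.
$$

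Second, I would invoke Lemma \ref{eq-yet}, which asserts that the inner sum equals $1$ for each $\alpha \neq \beta$. This collapses the formula to
$$
\Delta f(v) = \sum_{\substack{\alpha\in \fT\\ \alpha\neq \beta}} \bigl(c_\beta(f)-c_\alpha(f)\bigr) = (N+1)\,c_\beta(f) - \sum_{\alpha\in \fT} c_\alpha(f),
$$
using $\#\fT = N+1$. The right-hand side depends only on $\beta = \Type(v)$, which proves $\Delta f \in \sC$ and identifies $c_\beta(\Delta f)$ explicitly.

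Third, I would compare this with $\Delta \tilde{f}$ on the $N$-simplex $Y$. As computed in Example \ref{Ex-P}, $\Delta$ acts on $C^0(Y)$ as $(N+1)I_{N+1}-J_{N+1}$, so for any vertex $x \in \Ver(Y)$ of label $\beta$,
$$
\Delta \tilde{f}(x) = (N+1)\tilde{f}(x) - \sum_{y\in \Ver(Y)} \tilde{f}(y) = (N+1)\,c_\beta(f) - \sum_{\alpha\in \fT} c_\alpha(f),
$$
which matches $c_\beta(\Delta f)$. Hence $\widetilde{\Delta f} = \Delta \tilde{f}$. The final claim is then immediate: if $f \in \sC_0$ then $\sum_\alpha c_\alpha(f) = 0$, so $\tilde{f} \in \delta C^1(Y)$ and $\Delta \tilde{f} = (N+1)\tilde{f}$; transporting back via the isomorphism $\sC \cong C^0(Y)$ gives $\Delta f = (N+1) f$.

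There is no real obstacle here; the only subtle point is recognizing that the weighted sum in the definition of $\delta$, when restricted to edges leading to vertices of a fixed type, normalizes to $1$ — this is precisely the content of Lemma \ref{eq-yet}, and it is what forces the action of $\Delta$ on $\sC$ to coincide with the simple combinatorial Laplacian on the $N$-simplex.
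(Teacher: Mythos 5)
Your proof is correct and follows essentially the same route as the paper: expand $\Delta f(v)=\sum_x \frac{w([x,v])}{w(v)}(f(v)-f(x))$, group by type, invoke Lemma \ref{eq-yet} to normalize each inner sum to $1$, and read off that the result depends only on $\Type(v)$ and matches the Laplacian on the $N$-simplex. The only cosmetic difference is that you cite the matrix $(N+1)I_{N+1}-J_{N+1}$ from Example \ref{Ex-P}, whereas the paper writes the same operator as $N\tilde f(z)-\sum_{y\ne z}\tilde f(y)$ directly.
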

\begin{proof}
For a fixed $v\in \Ver(X_\cF)$, we have
$$
\Delta f(v)=\sum_{\substack{x\in \Ver(X_\cF)\\ [x,v]\in S_1(X_\cF)}}\frac{w([x,v])}{w(v)}(f(v)-f(x))
$$
$$
\overset{\mathrm{Lem.} \ref{lem-w}}{=}N f(v)- \sum_{\substack{x\in \Ver(X_\cF)\\ [x,v]\in S_1(X_\cF)}}\frac{w([x,v])}{w(v)}f(x)
$$
$$
= N f(v)- \sum_{\substack{\alpha\in \fT\\ \alpha\neq \Type(v)}}
\sum_{\substack{x\in \Ver(X_\cF)\\ \Type(x)=\alpha \\ [x,v]\in S_1(X_\cF)}}\frac{w([x,v])}{w(v)}f(x).  
$$
Now
$$
\sum_{\substack{x\in \Ver(X_\cF)\\ \Type(x)=\alpha \\ [x,v]\in S_1(X_\cF)}}\frac{w([x,v])}{w(v)}f(x) = 
c_\alpha(f)\sum_{\substack{x\in \Ver(X_\cF)\\ \Type(x)
=\alpha \\ [x,v]\in S_1(X_\cF)}}\frac{w([x,v])}{w(v)}\overset{\mathrm{Lem.}\ref{eq-yet}}{=}  c_\alpha(f). 
$$
Thus, if we denote $\beta=\Type(v)$, 
$$
\Delta f(v) = N c_\beta(f) - \sum_{\substack{\alpha\in \fT\\ \alpha\neq \beta}} c_\alpha(f). 
$$
It is clear from this that $\Delta f\in \sC$. Moreover, for any $z\in \Ver(Y)$ we have 
$$
\widetilde{\Delta f}(z)=N\tilde{f}(z)-\sum_{\substack{y\in \Ver(Y_\cF)\\ y\neq z}}
\tilde{f}(y)=\Delta\tilde{f}(z).  
$$
\end{proof}

\begin{lem}\label{lemNNC2}
Let $f\in C^0(X_\cF)$ be a $\Delta$-eigenfunction with eigenvalue $c$. If $c=0$ or $N+1$, then 
$f\in \sC$. 
\end{lem}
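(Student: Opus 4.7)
I would treat the two cases $c=0$ and $c=N+1$ separately. For $c=0$, the identity $(\Delta f,f)=(df,df)=0$ forces $df=0$, so $f\in Z^0(X_\cF)$. Since $X_\cF$ is connected for $N\geq 1$ by Lemma \ref{lemn2.1}, we have $Z^0(X_\cF)=\R\mathbf{1}\subset \sC$, and the claim follows. (For $N=0$ the statement is vacuous.)

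The case $c=N+1$ I would handle by induction on $N$. The base $N=1$ is read off from Lemma \ref{lem-n2.2}: in each of the two structures of $X_\cF$, the $2$-eigenspace of $\Delta$ on $C^0(X_\cF)$ has dimension $1=\dim\sC_0$, so it must equal $\sC_0$. For the inductive step, I would retrace the proof of Proposition \ref{prop_ny} with the specific choice $R=(N-c)/N=-1/N$: the resulting inequality collapses to exactly $c\leq N+1$, so under the hypothesis $c=N+1$ every step of the derivation must be an equality. In particular, for each $v\in\Ver(X_\cF)$ and each type $\alpha\in\fT$, the cochain $\tau_v df_\alpha\in C^0(\Lk(v))$ either vanishes or is an $N$-eigenfunction of $\Delta_v$. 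Applying the inductive hypothesis to $\Lk(v)\cong X_\cG$ (with $\dim X_\cG=N-1$), this $N$-eigenspace is contained in $\sC(\Lk(v))$, hence $\tau_v df_\alpha$ is type-constant on $\Lk(v)$. Unpacking $\tau_v df_\alpha(x)=f_\alpha(x)-f_\alpha(v)$ together with the definition of $f_\alpha$ (and using that $R=-1/N$ is neither $0$ nor $1$) then translates this into the statement that $f$ itself is type-constant on $\Lk(v)$, for every $v$.

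It remains to promote local type-constancy to global type-constancy of $f$. For this it suffices to verify that for each $\gamma\in\fT$, the graph on the type-$\gamma$ vertices of $X_\cF$ in which two vertices are declared adjacent iff they share a common neighbor is connected: a chain of such adjacencies propagates the value of $f$ along vertices of the same type via the local type-constancy on the common neighbors' links. Connectedness is immediate whenever $\cF$ has a type $\delta\in\fT$ sitting in a different gap from $\gamma$, since then any type-$\delta$ vertex is adjacent to every type-$\gamma$ vertex. The genuinely delicate case is the single-gap situation $X_\cF\cong X^N_\varnothing$, where one must build common neighbors from types in the same gap as $\gamma$: a case analysis on two type-$\gamma$ subspaces $G_1,G_2$ handles the generic cases via common neighbors nested inside $G_1\cap G_2$ or containing $G_1+G_2$, and handles the boundary "complementary" configuration $G_1\cap G_2=0$, $G_1+G_2=V$ by introducing an intermediate type-$\gamma$ subspace $G_3\supset\mathrm{span}(u_1,u_2)$ with $u_j\in G_j\setminus 0$.

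The main obstacle is precisely this local-to-global step. The spectral machinery of $\S\ref{ss3.2}$ together with the inductive hypothesis yields the local assertion $f|_{\Lk(v)}\in\sC(\Lk(v))$ essentially for free, but the passage to global type-constancy is a combinatorial fact about the flag complex that must be checked by hand and does not follow from the eigenvalue estimates developed so far.
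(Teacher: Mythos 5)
Your proposal is correct and follows essentially the same strategy as the paper's proof: induction on $N$ with base case from Lemma~\ref{lem-n2.2}, and for the inductive step a rerun of the chain of inequalities from Proposition~\ref{prop_ny} with $R=(N-c)/N=-1/N$, so that $c=N+1$ forces equality throughout and hence forces each nonzero $\tau_v df_\alpha$ into the top eigenspace of $\Delta_v$ on $C^0(\Lk(v))$, whereupon the inductive hypothesis applies. The only difference is presentational---the paper argues by contradiction (exhibiting a same-type pair $x,y$ with $f(x)\neq f(y)$ and a common neighbor $v$ so that $\tau_v df_\alpha$ avoids the top eigenspace, yielding a strict inequality), while you argue directly (connectedness of the common-neighbor graph patches local type-constancy to global)---but the underlying combinatorial case analysis on a pair of subspaces $G_1,G_2$ via $G_1\cap G_2\neq 0$, $G_1+G_2\neq V$, and the complementary configuration $G_1\oplus G_2=V$ is substantively the same as the paper's.
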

\begin{proof}
Using the fact that $X_\cF$ is connected, it is easy to show that $\Delta f=0$ if and only if $f$ 
is constant.  We prove that if $\Delta f=(N+1)f$, then $f$ is type-constant.  

First, assume $N=1$. Then either $X_\cF=X_\varnothing^0\ast X_\varnothing^0$ or $X_\cF=X_\varnothing^1$. 
In either case, the matrix of $m\Delta$ has the form 
$$
\begin{pmatrix} 
m I_m & -A \\
-A^t & m I_m
\end{pmatrix},
$$ 
where $A$ gives the adjacency relations between vertices of type $\alpha$ and $\beta$ (there are only two types), and 
either $m=q+1$ or $m=q^2+q+1$. Let $\mathbf{x}=(x_1, x_2, \dots, x_{2m})^t$ be an eigenvector with eigenvalue $2m$. 
Let $\mathbf{x}_\alpha=(x_1, x_2, \dots, x_{m})^t$ and $\mathbf{x}_\beta=(x_{m+1}, x_2, \dots, x_{2m})^t$. 
Then 
\begin{align*}
mI_m \mathbf{x}_\alpha - A\mathbf{x}_\beta &= 2m \mathbf{x}_\alpha \\
-A^t \mathbf{x}_\alpha + mI_m \mathbf{x}_\beta &= 2m \mathbf{x}_\beta.
\end{align*}
Hence $m \mathbf{x}_\alpha = -A\mathbf{x}_\beta$ and $m \mathbf{x}_\beta =-A^t \mathbf{x}_\alpha$. 
This implies $m^2 \mathbf{x}_\alpha = AA^t \mathbf{x}_\alpha$. In the first case, $AA^t=m J_m$, 
so $m \mathbf{x}_\alpha = J_m \mathbf{x}_\alpha$. This implies that $m x_j=\sum_{i=1}^m x_i$ for all $1\leq j\leq m$. 
Hence $x_1=x_2=\dots=x_m$. Similarly, one shows that $x_{m+1}=\dots=x_{2m}$. In the second case, $AA^t=qI_m+J_m$, so 
$$
m^2 \mathbf{x}_\alpha= q \mathbf{x}_\alpha + J_m \mathbf{x}_\alpha. 
$$
Hence $(m^2-q) x_j=\sum_{i=1}^m x_i$ for all $1\leq j\leq m$, which again implies $x_1=x_2=\dots=x_m$. 
Similarly, one shows that $x_{m+1}=\dots=x_{2m}$, since $A^t A=qI_m+J_m$. 

Now assume $N>1$ and that we proved the claim for all $X_\cF$ of dimension less than $N$. 
Suppose $f$ is not type-constant. Then there are two vertices $x, y$ of the same type  
such that $f(x)\neq f(y)$. We claim that we can choose $x$ and $y$ so that 
there is a vertex $v\in \Ver(X_\cF)$ such that $x, y\in \Lk(v)$.  We start with $X_\cF=X_\varnothing^n$. In that case $x$ 
and $y$ correspond to subspaces $W_1$ and $W_2$ of $\F_q^{n+2}$ of the same dimension. 
By assumption $n>1$. If $\dim(W_i)=1$, then $v$ corresponding to $W_1+W_2$ is adjacent to both $x$ and $y$.  
(Note that $\dim(W_1+W_2)=2<n+2$.) If $r=\dim(W_i)>1$, choose a line $\ell_i\in W_i$. Let $W_3$ be a subspace 
of dimension $r$ which contains $\ell_1+\ell_2$. Let $z$ be the corresponding vertex. 
If $f(x)=f(z)$, then we replace $x$ by $z$ and take $v$ corresponding to $\ell_2$. If $f(x)\neq f(z)$, then we replace $y$ by $z$ 
and take $v$ corresponding to $\ell_1$. Now suppose $X_\cF\cong X_\varnothing^{n_1}\ast \cdots \ast X_\varnothing^{n_s}$, $s\geq 2$.  
Our vertices are in the same $X_\varnothing^{n_i}$ since they have the same type, but then any vertex in another 
$X_\varnothing^{n_j}$ is adjacent to both $x$ and $y$. 

Let $x, y\in \Lk(v)$ be as in the previous paragraph. Let $\Type(v)=\alpha$. 
Consider the function $\tau_vdf_\alpha$. We have 
$$
\tau_vdf_\alpha(x)=df_\alpha([v,x])=f(x)-Rf(v)\neq f(y)-Rf(v) = \tau_vdf_\alpha(y). 
$$ 
Hence $\tau_vdf_\alpha\in C^0(\Lk(v))$ is not type-constant.  
By induction, $\tau_vdf_\alpha$ does not lie in the subspace of $C^0(\Lk(v))$ spanned by eigenfunctions 
with eigenvalue $N$. This implies (use the orthonormal decomposition of Lemma \ref{lemDec5} and Proposition \ref{prop_ny})
$$
(\Delta_v \tau_vdf_\alpha, \tau_vdf_\alpha)_v< N (\tau_vdf_\alpha, \tau_vdf_\alpha)_v. 
$$
This inequality implies, as in the proof of Proposition \ref{prop_ny}, that 
$$
(\Delta \rho_\alpha df_\alpha, \rho_\alpha df_\alpha) < N (\rho_\alpha df_\alpha, df_\alpha). 
$$
As in the proof of Proposition \ref{prop_ny}, this leads to 
$$
(N+1)(N-1)c< N[(N-c)(R-1)^2+(N+1)(R^2+N)],
$$
where $c=N+1$ and $R=(N-c)/N$. But for these $c$ and $R$ both sides 
are equal, so the inequality cannot be strict. 
\end{proof}

\begin{prop}
A function $f\in C^0(X_\cF)$ is a $\Delta$-eigenfunction with eigenvalue $0$ 
if and only if $f$ is constant. A function $f\in C^0(X_\cF)$ is a $\Delta$-eigenfunction with eigenvalue $N+1$ 
if and only if $f\in \sC_0$. This implies that $M^0(X_\cF)=N+1$ and its multiplicity 
as an eigenvalue of $\Delta$ is $N$. 
\end{prop}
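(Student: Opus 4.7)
The plan is to combine the two preceding lemmas with the simple model case of the $N$-simplex $Y$ that was analyzed just before Lemma \ref{lemNNC}. Everything reduces to transferring the spectral picture of $\Delta$ on $C^0(Y)$ across the isomorphism $\sC \xrightarrow{\sim} C^0(Y)$, $f\mapsto \tilde{f}$.

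For the eigenvalue $0$ claim, I would simply invoke connectedness of $X_\cF$ (which is Lemma \ref{lemn2.1}, using $N\geq 1$; the case $N=0$ is trivial). Since $(\Delta f,f) = (df,df)$, the equation $\Delta f=0$ forces $df=0$, and a cocycle on a connected complex is constant; conversely, constant functions are obviously annihilated by $\Delta$.

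For the eigenvalue $N+1$ claim, the forward direction is exactly the content of Lemma \ref{lemNNC2}: any such $f$ lies in $\sC$. Then by Lemma \ref{lemNNC}, $\Delta f \in \sC$ and $\widetilde{\Delta f} = \Delta \tilde f$ in $C^0(Y)$, so $\tilde f$ is a $\Delta$-eigenfunction on the $N$-simplex $Y$ with eigenvalue $N+1$. The discussion preceding Lemma \ref{lemNNC} identifies the $(N+1)$-eigenspace of $\Delta$ on $C^0(Y)$ with $\delta C^1(Y) = \{g\in C^0(Y) : \sum_{x\in\Ver(Y)} g(x)=0\}$, which under the isomorphism $\sC \xrightarrow{\sim} C^0(Y)$ corresponds precisely to $\sC_0$. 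Hence $f\in \sC_0$. The converse direction is already stated as the last sentence of Lemma \ref{lemNNC}.

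For the multiplicity and maximum statements, the space $\sC$ has dimension $\#\fT = N+1$ (one degree of freedom per type) and $\sC_0$ is cut out by a single linear equation, so $\dim \sC_0 = N$; combined with the eigenspace characterization just proved, this gives multiplicity exactly $N$. Finally, Proposition \ref{prop_ny} gives $M^0(X_\cF)\leq N+1$, and since $\sC_0\neq 0$ (as $N\geq 1$) we have produced a genuine eigenvector with eigenvalue $N+1$, so $M^0(X_\cF)=N+1$.

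The only step requiring any care is verifying that the isomorphism $\sC\xrightarrow{\sim} C^0(Y)$ really does intertwine the two curvature operators; but this is already packaged in Lemma \ref{lemNNC}, so no new obstacle arises. In effect, this proposition is a bookkeeping consequence of Lemmas \ref{lemNNC} and \ref{lemNNC2}, with the $N$-simplex model supplying the explicit description of the top eigenspace.
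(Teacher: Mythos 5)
Your proof is correct and follows the same route as the paper's: connectedness for the $0$-eigenspace, Lemma \ref{lemNNC2} plus Lemma \ref{lemNNC} to reduce the $(N+1)$-eigenspace to the explicit picture on the $N$-simplex $Y$, and then the dimension count $\dim\sC_0=N$ together with Proposition \ref{prop_ny} for the multiplicity and maximality claims. The paper leaves the last implication ("This implies...") tacit, and you have simply spelled it out; no substantive difference.
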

\begin{proof} It is easy to check that $\Delta f = 0\Leftrightarrow df=0\Leftrightarrow f$ is constant (since $X_\cF$ is connected).  
By Lemma \ref{lemNNC}, if $f\in \sC_0$, then  $\Delta f=(N+1)f$. Conversely, suppose $\Delta f=(N+1)f$.
By Lemma \ref{lemNNC2}, $f$ is type-constant, so by Lemma \ref{lemNNC}, 
$$
\widetilde{\Delta f} = \widetilde{(N+1) f} = (N+1)\tilde{f}=\Delta \tilde{f}. 
$$
This implies $f\in \sC_0$. 
\end{proof}

\begin{rem}
In \cite{PapMM}, we proved a general result about finite buildings which implies that $M^i(X_\cF)=N+1$ for all $0\leq i\leq N-1$. 
\end{rem}

\begin{prop}\label{thm-last}
$m^0(X_\cF)\leq N$.
\end{prop}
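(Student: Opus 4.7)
The plan is to exploit a trace computation for $\Delta$ acting on $C^0(X_\cF)$, combined with the already-established behavior of $\Delta$ on the type-constant subspace $\sC$.

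First, I would compute $\mathrm{Tr}(\Delta)$ on $C^0(X_\cF)$ directly from the formula \eqref{eq-delta} for $\delta$. In the basis of indicator functions $\{\phi_v\}_{v\in\Ver(X_\cF)}$, the diagonal entry $\Delta\phi_v(v)$ equals
$$\sum_{\substack{x\in\Ver(X_\cF)\\ [x,v]\in S_1(X_\cF)}}\frac{w([x,v])}{w(v)}\bigl(\phi_v(v)-\phi_v(x)\bigr)=\sum_{\substack{x\in\Ver(X_\cF)\\ [x,v]\in S_1(X_\cF)}}\frac{w([x,v])}{w(v)}=N,$$
where the last equality is an instance of Lemma \ref{lem-w} (the sum of $w([x,v])$ over the $1$-simplices through $v$ is $N\cdot w(v)$). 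Summing over all $v$ gives $\mathrm{Tr}(\Delta)=N\cdot|\Ver(X_\cF)|$.

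Next, I would use the fact, already proven, that the $(N+1)$-dimensional subspace $\sC$ of type-constant functions is $\Delta$-invariant with $\Delta$ acting as $0$ on constants (multiplicity $1$) and as $N+1$ on $\sC_0$ (multiplicity $N$), by Lemma \ref{lemNNC}. Hence $\mathrm{Tr}(\Delta|_{\sC})=N(N+1)$. Since $\Delta$ is self-adjoint with respect to \eqref{eq-pairing}, the orthogonal complement $\sC^\perp\subset C^0(X_\cF)$ is also $\Delta$-invariant, and
$$\mathrm{Tr}(\Delta|_{\sC^\perp})=N\cdot|\Ver(X_\cF)|-N(N+1)=N\bigl(|\Ver(X_\cF)|-N-1\bigr).$$
Since $\dim\sC^\perp=|\Ver(X_\cF)|-N-1$, the average eigenvalue of $\Delta$ on $\sC^\perp$ is exactly $N$. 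In particular, $\sC^\perp$ contains an eigenvector with eigenvalue $\leq N$.

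Finally, this eigenvalue must be strictly positive: since $X_\cF$ is connected (Lemma \ref{lemn2.1}), the kernel of $\Delta$ on $C^0(X_\cF)$ is the line $\R\mathbf{1}$, which lies in $\sC$, so $\sC^\perp\cap\ker(\Delta)=0$. We conclude $m^0(X_\cF)\leq N$. The only check to include is that $\sC^\perp\neq 0$, which follows because each of the $N+1$ types is realized by more than one vertex (every Gaussian binomial $\gc{m}{d}$ with $1\leq d\leq m-1$ exceeds $1$), so $|\Ver(X_\cF)|>N+1$. There is no real obstacle here; the whole argument is contained in the trace identity and the $\Delta$-invariance of $\sC$.
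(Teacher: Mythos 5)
Your proof is correct and takes a genuinely different, and in fact cleaner, route than the paper's. The paper first rules out $m^0(X_\cF)=N+1$ by a combinatorial argument showing $\Delta^2\neq(N+1)\Delta$, then introduces the auxiliary type-averaged function $h$, deduces from $\Delta h=ch$ and $c\neq 0,N+1$ that each $f_\alpha$ is orthogonal to $\mathbf 1$, and finally compares the inequality $(\Delta f_\alpha,f_\alpha)\geq c(f_\alpha,f_\alpha)$ against the identity in Lemma \ref{lemd31} to extract $(N-c)(R-1)^2\geq 0$ and hence $c\leq N$. You bypass all of this: you compute $\mathrm{Tr}(\Delta)=N\cdot|\Ver(X_\cF)|$ directly from \eqref{eq-delta} and Lemma \ref{lem-w} (the diagonal entry $\Delta\phi_v(v)$ equals $N$ for every $v$), observe via Lemma \ref{lemNNC} that the $\Delta$-invariant subspace $\sC$ contributes exactly $N(N+1)$ to the trace, and conclude that on $\sC^\perp$ the average eigenvalue is exactly $N$; since $\ker\Delta=\R\mathbf 1\subset\sC$ and $\sC^\perp\neq 0$, the smallest eigenvalue on $\sC^\perp$ is a positive number $\leq N$. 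This is a standard and effective trick, and it has the advantage of needing neither the $f_\alpha$-machinery from the proof of Proposition \ref{prop4.15-15} nor a separate argument that $m^0\neq N+1$. What the paper's approach buys in exchange is coherence with the surrounding development, since it reuses Lemma \ref{lemd31} and the $\rho_\alpha$/$\tau_v$ apparatus that are needed anyway for Propositions \ref{prop4.15-15} and \ref{prop_ny}; but purely for this one statement, your trace argument is shorter and more transparent.
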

\begin{proof} 
Denote $c:=m^0(X_\cF)$ and let $f$ be a $\Delta$-eigenfunction with
eigenvalue $c$. First we claim that $c\neq N+1$. Indeed, $\Delta$ is
a semi-simple operator and if $c=N+1$ then by Proposition
\ref{prop_ny} it has only two distinct eigenvalues, namely $0$ and
$N+1$. This implies that $\Delta^2=(N+1)\Delta$. 
In $X_\cF$ we can find two vertices $x$ and $y$ which are not adjacent 
but such that there is another vertex $v$ which is adjacent to both $x$ and $y$. 
Let $g\in C^0(X_\cF)$ be a function such that $g(x)\neq 0$ but $g(x')=0$ if $x'\neq x$. 
Now $\Delta g(y)=0$ because this is a sum of the values of $g$ at $y$ and the vertices 
adjacent to $y$, and $x$ is not one of them. On the the other hand, $\Delta^2 g(y)\neq 0$ 
since this is a sum which involves $g(x)$ with a non-zero coefficient. This 
contradicts the equality $\Delta^2=(N+1)\Delta$. 

Define a function $h\in C^0(X_\cF)$ by 
$$h(v)=\sum_{\substack{x\in \Ver(X_\cF)\\ \Type(x)=\Type(v)}} f(x), \qquad \text{for any }v\in \Ver(X_\cF). 
$$
It is clear that $h$ is type-constant, and because 
$f$ is a $\Delta$-eigenfunction, we have $\Delta h=ch$. Since $c\neq 0, N+1$, 
the function $h$ must be identically $0$. Therefore,  
$\sum_{\Type(v)=\beta}f(v)=0$ for any
fixed $\beta\in \fT$. Obviously the same is also true for
$f_\alpha$, i.e., $\sum_{\Type(v)=\beta}f_\alpha(v)=0$. Since $w(v)$
depends only on the type of $v$, we see that $f_\alpha$ is orthogonal 
to $\mathbf{1}$ in $C^0(X_\cF)$ with respect to the pairing \eqref{eq-pairing}. 
As in the proof of Lemma
\ref{lemDec5}, this implies that
$$
(\Delta f_\alpha, f_\alpha)\geq c\cdot (f_\alpha, f_\alpha).
$$
Summing over all types, we get
$$
\sum_{\alpha\in \fT} (\Delta f_\alpha, f_\alpha)\geq c(N+R^2)\cdot (f, f). 
$$
Comparing this inequality with the expression in Lemma \ref{lemd31},
we conclude that $(N-c)(R-1)^2\geq 0$. Since $R$ is arbitrary, we
must have $c\leq N$.
\end{proof}

\subsection*{Acknowledgements} The author thanks Ori Parzanchevski and Farbod Shokrieh 
for useful comments on an earlier version of the paper. 

\def\polhk#1{\setbox0=\hbox{#1}{\ooalign{\hidewidth
  \lower1.5ex\hbox{`}\hidewidth\crcr\unhbox0}}}
\providecommand{\bysame}{\leavevmode\hbox to3em{\hrulefill}\thinspace}
\providecommand{\MR}{\relax\ifhmode\unskip\space\fi MR }
\providecommand{\MRhref}[2]{%
  \href{http://www.ams.org/mathscinet-getitem?mr=#1}{#2}
}
\providecommand{\href}[2]{#2}


\begin{thebibliography}{10}

\bibitem{ABM}
R.~Aharoni, E.~Berger, and R.~Meshulam, \emph{Eigenvalues and homology of flag
  complexes and vector representations of graphs}, Geom. Funct. Anal.
  \textbf{15} (2005), no.~3, 555--566.

\bibitem{BS}
W.~Ballmann and J.~{\'S}wi{\polhk{a}}tkowski, \emph{On {$L^2$}-cohomology and
  property ({T}) for automorphism groups of polyhedral cell complexes}, Geom.
  Funct. Anal. \textbf{7} (1997), no.~4, 615--645.

\bibitem{Borel}
A.~Borel, \emph{Cohomologie de certains groupes discretes et laplacien
  {$p$}-adique (d'apr\`es {H}. {G}arland)}, S\'eminaire {B}ourbaki, 26e ann\'ee
  (1973/1974), {E}xp. {N}o. 437, Springer, Berlin, 1975, pp.~12--35. Lecture
  Notes in Math., Vol. 431.

\bibitem{Casselman}
W.~Casselman, \emph{On a {$p$}-adic vanishing theorem of {G}arland}, Bull.
  Amer. Math. Soc. \textbf{80} (1974), 1001--1004.

\bibitem{DJ1}
J.~Dymara and T.~Januszkiewicz, \emph{New {K}azhdan groups}, Geom. Dedicata
  \textbf{80} (2000), no.~1-3, 311--317.

\bibitem{DJ2}
\bysame, \emph{Cohomology of buildings and their automorphism groups}, Invent.
  Math. \textbf{150} (2002), no.~3, 579--627.

\bibitem{Garland2}
H.~Garland, \emph{{$p$}-adic curvature and a conjecture of {S}erre}, Bull.
  Amer. Math. Soc. \textbf{78} (1972), 259--261.

\bibitem{Garland}
\bysame, \emph{{$p$}-adic curvature and the cohomology of discrete subgroups of
  {$p$}-adic groups}, Ann. of Math. (2) \textbf{97} (1973), 375--423.

\bibitem{Laumon}
G.~Laumon, \emph{Cohomology of {D}rinfeld modular varieties. {P}art {I}},
  Cambridge Studies in Advanced Mathematics, vol.~41, Cambridge University
  Press, Cambridge, 1996, Geometry, counting of points and local harmonic
  analysis.

\bibitem{Li}
W.-C.~W. Li, \emph{Ramanujan hypergraphs}, Geom. Funct. Anal. \textbf{14}
  (2004), no.~2, 380--399.

\bibitem{Lubotzky}
A.~Lubotzky, \emph{Discrete groups, expanding graphs and invariant measures},
  Progress in Mathematics, vol. 125, Birkh\"auser Verlag, Basel, 1994, With an
  appendix by Jonathan D. Rogawski.

\bibitem{LSV}
A.~Lubotzky, B.~Samuels, and U.~Vishne, \emph{Explicit constructions of
  {R}amanujan complexes of type {$A_d$}}, European J. Combin. \textbf{26}
  (2005), no.~6, 965--993.

\bibitem{Matsushima}
Y.~Matsushima, \emph{On {B}etti numbers of compact, locally sysmmetric
  {R}iemannian manifolds}, Osaka Math. J. \textbf{14} (1962), 1--20.

\bibitem{Munkres}
J.~Munkres, \emph{Elements of algebraic topology}, Addison-Wesley Publishing
  Company, Menlo Park, CA, 1984.

\bibitem{Pansu}
P.~Pansu, \emph{Formules de {M}atsushima, de {G}arland et propri\'et\'e ({T})
  pour des groupes agissant sur des espaces sym\'etriques ou des immeubles},
  Bull. Soc. Math. France \textbf{126} (1998), no.~1, 107--139.

\bibitem{PapMM}
M.~Papikian, \emph{On eigenvalues of {$p$}-adic curvature}, Manuscripta Math.
  \textbf{127} (2008), no.~3, 397--410.

\bibitem{Reiner}
I.~Reiner, \emph{Maximal orders}, London Mathematical Society Monographs. New
  Series, vol.~28, The Clarendon Press, Oxford University Press, Oxford, 2003,
  Corrected reprint of the 1975 original, With a foreword by M. J. Taylor.

\bibitem{SS}
P.~Schneider and U.~Stuhler, \emph{The cohomology of {$p$}-adic symmetric
  spaces}, Invent. Math. \textbf{105} (1991), 47--122.

\bibitem{Serre}
J.-P. Serre, \emph{Cohomologie des groupes discrets}, Prospects in mathematics
  ({P}roc. {S}ympos., {P}rinceton {U}niv., {P}rinceton, {N}.{J}., 1970),
  Princeton Univ. Press, Princeton, N.J., 1971, pp.~77--169. Ann. of Math.
  Studies, No. 70.

\bibitem{Zuk}
A.~{\.Z}uk, \emph{La propri\'et\'e ({T}) de {K}azhdan pour les groupes agissant
  sur les poly\`edres}, C. R. Acad. Sci. Paris S\'er. I Math. \textbf{323}
  (1996), no.~5, 453--458.

\end{thebibliography}
\end{document}